\newcommand{\argsup}{\mathop{\rm arg~sup}\limits}
\newcommand{\rs}[1]{{\mbox{\scriptsize \sc #1}}}
\newcommand{\vc}[1]{{\boldsymbol #1}} 
\newcommand{\vcn}[1]{{\bf #1}}
\newcommand{\sr}[1]{{\cal #1}}
\newcommand{\dd}[1]{\mathbb{#1}}
\newcommand{\rmn}[1]{\if#11I\else {\if#12I\hspace{-0.12ex}I\hspace{-0.85ex}\else {\if #13I\hspace{-0.16ex}I\hspace{-0.16ex}I\hspace{-1.6ex}\else I\hspace{-1.2ex}V \fi} \fi} \fi}
\newcommand{\eq}[1]{(\ref{eq:#1})}
\newcommand{\lem}[1]{Lemma~\ref{lem:#1}}
\newcommand{\cor}[1]{Corollary~\ref{cor:#1}}
\newcommand{\thr}[1]{Theorem~\ref{thr:#1}}
\newcommand{\fig}[1]{Figure~\ref{fig:#1}}
\newcommand{\sectn}[1]{Section~\ref{sect:#1}}
\newcommand{\lemt}[1]{\ref{lem:#1}}
\newcommand{\thrt}[1]{\ref{thr:#1}}
\newcommand{\sect}[1]{\ref{sect:#1}}
\newcommand{\br}[1]{\langle #1 \rangle}
\newcommand{\ol}{\overline}
\newcommand{\ul}{\underline}
\newcommand{\pend}{\hfill \thicklines \framebox(6.6,6.6)[l]{}}
\newenvironment{proof}{\noindent {\sc  Proof.} \rm}{\pend}
\newenvironment{proof*}[1]{\noindent {\sc  #1} \rm}{\pend \medskip}
\newtheorem{theorem}{Theorem}[section]
\newtheorem{lemma}{Lemma}[section]
\newtheorem{remark}{Remark}[section]
\newtheorem{corollary}{Corollary}[section]
\newcommand{\setsection}[2] {
\setcounter{section}{#1}
\setcounter{subsection}{0}
\setcounter{equation}{0}
\setcounter{conjecture}{0}
\setcounter{assumption}{0}
\setcounter{question}{0}
\setcounter{definition}{0}
\setcounter{theorem}{0}
\setcounter{corollary}{0}
\setcounter{lemma}{0}
\setcounter{proposition}{0}
\setcounter{remark}{0}
\setcounter{appen}{0}
\setsection*{\large \bf \thesection. #2}}
\newenvironment{mylist}[1]{\begin{list}{}
{\setlength{\itemindent}{#1mm}}
{\setlength{\itemsep}{0ex plus 0.2ex}}
{\setlength{\parsep}{0.5ex plus 0.2ex}}
{\setlength{\labelwidth}{10mm}}
}{\end{list}}
\newcommand{\setnewcounter} {
\setcounter{subsection}{0}
\setcounter{equation}{0}
\setcounter{conjecture}{0}
\setcounter{assumption}{0}
\setcounter{question}{0}
\setcounter{definition}{0}
\setcounter{theorem}{0}
\setcounter{corollary}{0}
\setcounter{lemma}{0}
\setcounter{proposition}{0}
\setcounter{remark}{0}
}
\begin{document}
\title{\bf \Large Martingale approach for tail asymptotic problems in the generalized Jackson network}

\author{Masakiyo Miyazawa\\ Tokyo University of Science}
\date{November 6, 2017,\\ to appear in \normalsize {\it Probability and Mathematical Statistics} 37, No.\ 2}

\maketitle

\begin{abstract}
We study the tail asymptotic of the stationary joint queue length distribution for a generalized Jackson network (GJN for short), assuming its stability. For the two station case, this problem has been recently solved in the logarithmic sense for the marginal stationary distributions under the setting that inter-arrival and service times have phase-type distributions. In this paper, we study similar tail asymptotic problems on the stationary distribution, but problems and assumptions are different. First, the asymptotics are studied not only for the marginal distribution but also the stationary probabilities of state sets of small volumes. Second, the interarrival and service times are generally distributed and light tailed, but of phase type in some cases. Third, we also study the case that there are more than two stations, although the asymptotic results are less complete. For them, we develop a martingale method, which has been recently applied to a single queue with many servers by the author.
\end{abstract}

\section{Introduction}
\label{sect:introduction}

Asymptotic analyses have been actively studied in the recent queueing theory. This is because queueing models, particularly, queueing networks, become very complicated and their exact analyses are getting harder. We are interested in asymptotic analyses for large queues in a generalized Jackson network and aim to understand their asymptotic behaviors through its modeling primitives.

There are two different types of asymptotic analyses for large queues. One is for a given model fixed. Large deviations is typically studied for this. Another is to study them through an approximating model. For example, such a model is obtained as the limit of a sequence of models under heavy traffic by scaling of time, space and/or modeling primitives. It is called heavy traffic approximation (e.g., see \cite{Reim1984,Whit2002}). Here, large queues are caused by heavy traffic. In this paper, we focus on the large deviations for a fixed model. Among them, we are particularly interested in the logarithmic tail asymptotics of the stationary distribution for a generalized Jackson network, GJN for short.

This problem has been studied by the standard approach of large deviations, but the decay rates are hard to analytically get using modeling primitives (e.g., see \cite{Maje2009}). The author \cite{Miya2015a} recently studied it by a matrix analytic method, and derived the decay rates for the marginal stationary distributions in an arbitrary direction for a two station GJN, assuming phase type distributions for service times and arrival processes, called a phase-type setting. We aim to generalize this result under a more general setting by a different approach.

Let $d \ge 2$ be the number of stations in the GJN. For $d=2$, we relax the phase type assumption, and consider the decay rates of the stationary probabilities for state sets of small volumes in addition to those of the marginal stationary distribution. For $d \ge 3$, we derive upper and lower bounds for those decay rates.

Our basic idea is to simplify the derivation of those asymptotic results in such a way that they are obtained in a similar manner to a reflecting random walk on a multidimensional orthant. This simplification greatly benefits for analysis although the decay rate problems for the reflecting random walk have not been fully solved even for $d=2$. To this end, we take an approach studied for a single queue with heterogeneous servers in \cite{Miya2017}, and modify it for a queueing network. In this approach, we first describe the GJN by a piecewise deterministic Markov process, PDMP for short. We then derive martingales for change of measures, and formulate the asymptotic problems under a new measure. The idea for simplification is used in deriving the martingale.

PDMP is a continuous time Markov process whose sample path is composed of two parts, a continuous part, which is deterministic, and a discontinuous part, called jumps, by which randomness is created. Thus, PDMP is particularly suitable for queueing models. However, jump instants are random, and state changes at them are complicated. Because of this, PDMP is hard for analysis. So, other methods have been employed in queueing theory. For example, the state space is discretized using phase type distributions, and a Markov chain is obtained. Then, matrix analysis is applicable. This phase type approach is numerically powerful but analytically less explicit because of state space description. Furthermore, it is getting harder to apply as a queueing model becomes complicated like a queueing network. We will not use such a matrix analysis. Nevertheless, it turns out that the phase type assumption is helpful in our asymptotic analysis in some cases.

Contrary to the analytical difficulty, the PDMP has a simple sample path. Its time evolution is easily presented by a stochastic integral equation using a test function, which maps the states of the PDMP to real values (see \eq{dynamics 1}). In this stochastic equation, state changes at the jump instants cause difficulty for analysis as we mentioned above. \citet{Davi1984} who introduced PDMP replaces those state changes at jump instants by a martingale and the so called boundary condition on the test function.

However, it is not easy to find a good class of the test functions which characterize a distribution on the state space of the PDMP. The idea of \cite{Miya2017} is to choose a smaller class of test functions to overcome those difficulties. We then have a semi-martingale, which can not characterize a distribution on the state space, but still retains full information to study large queues. Once the semi-martingale is obtained, then we use the standard technique for change of measure through constructing an exponential martingale, called a multiplicative functional.

In applying this martingale approach to the GJN, we need to know how the network model is changed under the new measure. Intuitively, some of its stations must be unstable for the tail asymptotic analysis to work. To study this instability problem, we will use the fact that the network structure is unchanged under the change of measure, and therefore the stability of each station is characterized by the traffic intensity at that station. These traffic intensities are obtained from the traffic equations, but they are non-linear because of unstable stations. Thus, this instability problem is not obvious. We challenge it, and find some sufficient conditions for the GJN to be partly unstable under the new measure, which depends on the choice of a martingale for change of measure.

This paper is made up by four sections. In \sectn{GJN}, the GJN (generalized Jackson network) is described by a PDMP, and a martingale for change of measure is derived. This section also considers geometric interpretations of the stability condition of the GJN, and present main results for the asymptotic problems. \sectn{change} discusses the method of change of measure, and considers how the GJN is changed under a new measure. In \sectn{proofs}, the main results are proved. For this, we first list major steps for deriving upper and lower bounds, then prepare several lemmas to complete the proofs.

In this paper, we will use real vectors in the following way. Column and row vectors and their dimensions are not specified as long as they can be identified in the context where they are used. Their inequality holds in entry-wise. $\vcn{e}_{k}$ is the unit vector whose $k$-th entry is 1 while all the other entries vanish. $\vc{1}$ is the vector all of whose entries are 1. The inner product $\sum_{i} x_{i} y_{i}$ of vectors $\vc{x}, \vc{y}$ of the same dimension is denoted by $\br{\vc{x}, \vc{y}}$, and $\|\vc{x}\| = \sqrt{\br{\vc{x},\vc{x}}}$. $\vc{x}$ is said to be a unit direction vector if $\vc{x} \ge 0$ and $\|\vc{x}\| = 1$. We denote the set of all unit direction vectors in $\dd{R}^{d}_{+}$ by $\overrightarrow{U}_{d}$. For $\vc{x}$ in a finite dimensional vector space $\sr{S}$ and its subset $B$, we will use the convention that $\vc{x} + B = \{\vc{x} + \vc{y} \in \sr{S}; \vc{y} \in B\}$. For a finite set $A$, its cardinality is denoted by $|A|$.

\subsection*{Acknowledgement}

This paper is dedicated to Professor Tomasz Rolski for his great contribution to academic society. In particular, he organized a series of international conferences in Karpacz and Bedlewo in Poland since 1980 up to the last year. I have benefited from those conferences about not only academic collaborations but also personal interactions. This paper is partly supported by JSPS KAKENHI Grant Number 16H027860001.

\section{Generalized Jackson network}
\label{sect:GJN}

We are concerned with a queueing network which has a finite number of stations with single servers and single class of customers. At each station, there is an infinite buffer, exogenous customers arrive subject to a renewal process if any, and customers are served in First-Come-First-Served manner by independent and identically distributed service times. Furthermore, the renewal process and service times are independent of everything else. Customers who complete service at a station are independently routed to the next stations or leave the network according to a given probability. We refer this queueing network as a GJN (generalized Jackson network). 

\subsection{Notations and assumptions}
\label{sect:notation}

Let us introduce notations for a GJN. Let $d$ be the total number of stations. We index stations by elements in $\sr{J} \equiv \{1,2,\ldots,d\}$, and let $\sr{E}$ be the set of the stations which have exogenous arrivals. For each station $i$, let $F_{e,i}$ for $i \in \sr{E}$ be the interarrival time distribution of exogenous customers, and let $F_{s,i}$ for $i \in \sr{J}$ be the service time distribution. Let $p_{ij}$ be the probability that a customer completing service at station $i$ is routed to station $j$ for $i,j \in \sr{J}$, where those customer leave the outside of the network with probability:
\begin{align*}
   p_{i0} \equiv 1 - \sum_{i \in \sr{J}} p_{ij}. 
\end{align*}
To exclude trivial cases, we assume that $d \times d$ matrix $P \equiv \{p_{ij}; i,j \in \sr{J}\}$ is strictly substochastic, and $d+1 \times d+1$ matrix $\ol{P} \equiv \{p_{ij}; i,j \in \{0\} \cup \sr{J}\}$ is irreducible, where $p_{00} = 0$, and $p_{0i} > 0$ only if $i \in \sr{E}$, where the value of $p_{0i}$ is specified later. We call $P$ as a routing matrix, while $\ol{P}$ is called an over all routing matrix.

At time $t$, let $L_{i}(t)$ be the number of customers in station $i \in \sr{J}$, and let $R_{s,i}(t)$ be the residual service time of a customer being served there if any, where we set up a new service time just after service completion and this service time is unchanged as long as station $i$ is empty. Thus, all $R_{s,i}(t)$ are always positive because of the right continuity, and $R_{s,i}(t-)$ vanishes only at service completion instants. For $i \in \sr{E}$, let $R_{e,i}(t)$ be the residual time to the next exogenous arrival at station $i$.

Denote the vectors whose $i$-th entries are $L_{i}(t)$, $R_{s,i}(t)$ for $i \in \sr{J}$ and $R_{e,i}(t)$ for $i \in \sr{E}$ by $\vc{L}(t), \vc{R}_{s}(t), \vc{R}_{e}(t)$, respectively, and define $X(t)$ and $\vc{R}(t)$ as
\begin{align*}
  X(t) = (\vc{L}(t), \vc{R}_{e}(t), \vc{R}_{s}(t)), \qquad \vc{R}(t) = (\vc{R}_{e}(t), \vc{R}_{s}(t)), \qquad t \ge 0.
\end{align*}
Then, $\{p_{ij}; i,j \in \sr{J}\}$, $\{F_{e,i}; i \in \sr{E}\}$ and $\{F_{s,i}; i \in \sr{J}\}$ are the modeling primitives, and the state space $S$ for $X(t)$ is given by
\begin{align*}
  S = \{(\vc{z},\vc{y}_{e}, \vc{y}_{s}); \vc{z} \in \dd{Z}_{+}^{d}, \vc{y}_{e} \in \dd{R}_{+}^{\sr{E}}, \vc{y}_{s} \in \dd{R}_{+}^{d}\},
\end{align*}
where $\dd{Z}_{+}$ and $\dd{R}_{+}$ are the sets of all nonnegative integers and all nonnegative real numbers, respectively. As usual, we assume that $X(t)$ is right-continuous and has left-hand limits. Let $\{\sr{F}_{t}; t \ge 0\}$ be a filtration generated by histories of all the sample paths of $X(\cdot)$, then $\sr{F}_{t}$ is right-continuous, and $\{X(t); t \ge 0\}$ is a $\sr{F}_{t}$-Markov process. 

Let $\widehat{F}_{e,i}$ and $\widehat{F}_{s,i}$ be the moment generating functions, MGF for short, of the distributions $F_{e,i}$ and $F_{s,i}$, respectively. We define $\beta_{w,i}$ and $\theta_{w,i}$ for $w=e,s$ as
\begin{align}
\label{eq:limit point 1}
 & \beta_{w,i} = \sup \{\theta \in \dd{R}; \widehat{F}_{w,i}(\theta) < \infty\}, \qquad \theta_{w,i} = \inf \{\theta \in \dd{R}; e^{-\theta} < \widehat{F}(\beta_{w,i}) \}
\end{align}
We will assume that $\beta_{w,i} > 0$ and $\theta_{w,i} = \infty$ for all $w=e,s$ and $i$. That is, all the distributions, $F_{w,i}$, have light tail and their moment generating functions diverges at their singular points. These conditions are assumed for technical simplicity.

For some important cases, we have to restrict these distributions in the following class. A positive random variable $T$ or its distribution is said to have a conditional MGF (moment generating function) with a uniform bound if there is a function $h$ of $\theta > 0$ such that $\dd{E}(e^{\theta T}) < \infty$ implies that 
\begin{align}
\label{eq:residual 1}
  \dd{E}(e^{\theta (T-t)})|T>t) \le h(\theta), \qquad \theta > 0,
\end{align}
as long as $\dd{P}(T>t) > 0$. Obviously, if $T$ is bounded, it satisfies \eq{residual 1}. Another obvious example is a NBU distribution, which is characterized by $\dd{P}(T> s+t|T>s) \le \dd{P}(T>t)$ for $s,t > 0$. An important class for our application is of phase type, which is defined as
\begin{align}
\label{eq:phase type 1}
  F(t) \equiv \dd{P}(T \le t) = 1 - \vc{a} e^{t U} \vc{1}, \qquad t \ge 0,
\end{align}
where $\vc{a}$ is a finite dimensional probability vector, and $U$ is a defective transition rate matrix with the same dimension as $\vc{a}$ such that $(-U)^{-1}$ is finite.

\begin{lemma}
\label{lem:phase type 1}
A phase type distribution has a conditional MGF with a uniform bound.
\end{lemma}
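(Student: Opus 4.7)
The plan is to exploit the underlying Markov-chain representation of the phase-type distribution and the resulting memoryless structure. Recall that $T$ may be realized as the absorption time of a finite-state continuous-time Markov chain with initial distribution $\vc{a}$ and transient generator $U$, the absorption rates being given by $-U\vc{1} \ge 0$. This identification makes the computation of both the MGF and its conditional version transparent.

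First, from $F(t) = 1 - \vc{a} e^{tU}\vc{1}$ I would differentiate to obtain the density $f(t) = \vc{a} e^{tU}(-U)\vc{1}$ and then evaluate
\begin{align*}
  \widehat{F}(\theta) = \dd{E}(e^{\theta T}) = \vc{a}\bigl(-(\theta I + U)\bigr)^{-1}(-U)\vc{1},
\end{align*}
which is finite exactly when $\theta$ is strictly smaller than $-\max\{\operatorname{Re}(\lambda) : \lambda \in \operatorname{spec}(U)\}$. For such $\theta$, the matrix $-(\theta I+U)$ is a nonsingular $M$-matrix, hence its inverse has nonnegative entries, so the column vector $\vcn{v}(\theta) := \bigl(-(\theta I + U)\bigr)^{-1}(-U)\vc{1}$ is a well-defined vector with nonnegative entries, bounded independently of any probabilistic input.

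Next, I would use the Markov property of the chain: conditionally on $\{T > t\}$, the state at time $t$ has distribution
\begin{align*}
  \vc{a}(t) = \frac{\vc{a} e^{tU}}{\vc{a} e^{tU}\vc{1}},
\end{align*}
which is again a probability vector. By the strong Markov property applied at time $t$, the residual life $T-t$ given $T > t$ is itself phase type with initial distribution $\vc{a}(t)$ and the same transient generator $U$. Consequently,
\begin{align*}
  \dd{E}\bigl(e^{\theta(T-t)} \,\big|\, T > t\bigr) = \vc{a}(t)\,\vcn{v}(\theta).
\end{align*}
Since $\vc{a}(t)$ has nonnegative entries summing to one, this inner product is dominated by the maximum entry of $\vcn{v}(\theta)$, a quantity that depends only on $\theta$ and the matrix $U$. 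Setting $h(\theta) := \max_i v_i(\theta)$ yields the required uniform bound as in \eq{residual 1}.

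The main obstacle is not in any single computation — each step is short — but in making the passage from the scalar survival description $F(t) = 1 - \vc{a} e^{tU}\vc{1}$ to the probabilistic CTMC picture rigorous enough to justify the conditional phase-type identity for $T - t$ given $T > t$. Once that identification is in hand, the uniformity of the bound is automatic because the family $\{\vc{a}(t) : t \ge 0\}$ lies in the compact simplex of probability vectors, independent of $t$, while $\vcn{v}(\theta)$ is a fixed vector depending only on $\theta$ and $U$.
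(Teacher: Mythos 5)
Your proposal is correct and follows essentially the same route as the paper's proof: both identify the conditional law of $T-t$ given $T>t$ as phase type with updated initial vector $\vc{a}e^{tU}/(\vc{a}e^{tU}\vc{1})$ and the same sub-generator $U$, so the conditional MGF is this probability vector applied to a fixed vector depending only on $\theta$ and $U$, and $h(\theta)$ is taken as that vector's maximal entry. Your version is in fact slightly more careful in writing the MGF as $\vc{a}\bigl(-(\theta I+U)\bigr)^{-1}(-U)\vc{1}$ and in invoking the Markov property explicitly, but the argument is the same.
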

\begin{proof}
Assume that $F$ is given by \eq{phase type 1}. Let $T$ be a random variable subject to $F$, and let $\vc{b}(s) = \frac {\vc{a} e^{s U}} {\vc{a} e^{s U} \vc{1}}$, then $\vc{b}(s)$ is a probability vector, and
\begin{align*}
  \dd{P}(T > s+t|T>t) = \vc{b}(t) e^{s U} \vc{1}, \qquad s,t>0,
\end{align*}
and therefore
\begin{align*}
  \dd{E}(e^{\theta (T-t)})|T>t) = \vc{b}(t)(-U)^{-1} (\theta I + U)^{-1} \vc{1}, \qquad t \ge 0, \theta > 0,
\end{align*}
which is finite as long as $\dd{E}(e^{\theta T}) = \vc{a} (-U)^{-1} (\theta I + U)^{-1} \vc{1}$ is finite. Hence, we have \eq{residual 1} by letting $h(\theta)$ be the maximum of all the entries of the vector $(-U)^{-1} (\theta I + U)^{-1} \vc{1}$.
\end{proof}

Thus, we consider the tail asymptotic problem for the GJN assuming the distributions of $T_{e,i}, T_{s,j}$ to have light tails, and, in some cases, we assume:
\begin{mylist}{3}
\item [(A1)] All the $T_{e,i}$ for $i \in \sr{E}$ and $T_{s,i}$ for $j \in \sr{J}$ have conditional MGF with uniform bounds, that is satisfy \eq{residual 1}. 
\end{mylist}
Let $\lambda_{e,i} = 1/\dd{E}(T_{e,i})$ for $i \in \sr{E}$ and $\mu_{s,i} = 1/\dd{E}(T_{s,i})$ for $i \in \sr{J}$. For convenience, we put $\lambda_{e,i} = 0$ for $i \in \sr{J} \setminus \sr{E}$. Let $\alpha^{(0)}_{i}$ for $i \in \sr{J}$ be the solutions of the following traffic equation.
\begin{align}
\label{eq:traffic 1}
  \alpha^{(0)}_{i} = \lambda_{i} + \sum_{j\in \sr{J}} \alpha^{(0)}_{j} p_{ji}, \qquad i \in \sr{J}.
\end{align}
It is easy to see that the solutions uniquely exist by the strict substochastic of the routing matrix $P$ and the irreducibility of $\ol{P}$, where we now put $p_{0i} = \lambda_{i}/\sum_{j \in \sr{J}} \lambda_{j}$ for $i \in \sr{J}$. Let $\rho^{(0)}_{i} = \alpha^{(0)}_{i} \dd{E}(T_{s,i})$, and assume the stability condition that
\begin{align}
\label{eq:stability 1}
  \rho^{(0)}_{i} < 1, \qquad i \in \sr{J}.
\end{align}
In \sectn{stability}, we will consider the case that some of stations are unstable. This case occurs under change of measure, and $\rho^{(0)}_{i}$ is no longer a right traffic intensity. This is the reason why we put superscript ``$^{(0)}$'' here.

\subsection{Piecewise Deterministic Markov process, PDMP}
\label{sect:PDMP}

In this paper, we consider $\{X(t); t \ge 0\}$ as a piecewise deterministic Markov process, PDMP for short, introduced by \citet{Davi1984}. PDMP is a Markov process with piece-wise deterministic and continuously differentiable sample path and finitely many discontinuous epochs in each finite time interval. Its randomness arises at discontinuous epochs, which are uniquely determined by hitting times when the deterministic sample path gets into a specified state set. The set of those discontinuous epochs constitute a counting process, and the piece-wise deterministic sample path is randomly changed at those times. We here assume that there is no other discontinuous state change. This slightly changes the standard description of PDMP due to \citet{Davi1984}, but it is a matter of formulation since Davis' PDMP can be described by the present formulation as well.

We now introduce notations to describe $X(t)$ as a PDMP. Let $N$ be a counting process for the expiring times of all the remaining times. That is,
\begin{align*}
  N(t) = \sum_{u \in (0,t]} \Big( \sum_{i \in \sr{E}} 1(\Delta R_{e.i}(u) > 0) + \sum_{i \in \sr{J}} 1(\Delta R_{s.i}(u) > 0) \Big), \qquad t \ge 0,
\end{align*}
where $\Delta$ is the difference operator such that $\Delta f(t) = f(t) - f(t-)$ for a function $f$ which is right-continuous and has left-hand limits. Clearly, $N$ counts all the discontinuous points of $X(t)$. However, it may multiply counts at the same instant, and therefore $\Delta N(t)$ may be greater than 1. To avoid this, we define a simplification of $N$ as
\begin{align*}
  N^{*}(t) = \sum_{u \in (0,t]} 1(\Delta N(u) > 0), \qquad t \ge 0.
\end{align*}
We then let $t_{0} = 0$, and inductively define $t_{n} = \inf\{u > t_{n-1}; \Delta N^{*}(u) > 0\}$ for $n=1, 2, \ldots$. Thus, $t_{n}$ is the $n$-th discontinuous epoch of $X(t)$, and a stopping time with respect to $\sr{F}_{t}$.

Between times $t_{n-1}$ and $t_{n}$, $X(t)$ is linearly changes, so continuously differentiable in such way that
\begin{align*}
  \frac d{dt} L_{i}(t) = 0, \qquad \frac d{dt} R_{e,i}(t) = -1(i \in \sr{E}), \qquad \frac d{dt} R_{s,i}(t) = 1(R_{s,i}(t) > 0), \qquad i \in \sr{J}.
\end{align*}
This differentiation can be described by an operator $\sr{A}$ on $C^{1}(S)$, which is the set of all continuously differentiable functions from $S$ to $\dd{R}$. Namely, $\sr{A}$ is defined as
\begin{align}
\label{eq:A 1}
  \sr{A}f(\vc{x}) = - \sum_{i \in \sr{E}} \frac {\partial} {\partial y_{e,i}} f(\vc{z},\vc{y}_{e},\vc{y}_{s}) - \sum_{i \in \sr{J}} \frac {\partial} {\partial y_{s,i}} f(\vc{z},\vc{y}_{e},\vc{y}_{s}) 1(z_{i} \ge 1).
\end{align}

Since PDMP is a strong Markov process, the conditional distribution of $X(t_{n})$ given $\sr{F}_{t_{n}-}$ is a function of $X(t_{n}-)$ for each $n \ge 1$, which is characterized by the transition kernel $\sr{K}$ given below.
\begin{align}
\label{eq:K 1}
  \sr{K} f \big(X(t-) \big) = \dd{E}\big( f(X(t)) | X(t-)\big), \quad X(t-) \in \Gamma,
\end{align}
for $f \in \sr{M}(S)$, where $\Gamma$ is the set of $\vc{x} \equiv (\vc{z},\vc{y}_{e},\vc{y}_{s}) \in S$ such that
\begin{align*}
  \exists i \in \sr{E}, y_{e,i} = 0 \quad \mbox{or} \quad \exists i \in \sr{J}, z_{i} \ge 1, y_{s,i} = 0.
\end{align*}
This $\Gamma$ is referred to as a terminal set, while $\sr{K}$ is referred to as a jump kernel.

\subsection{Martingale decomposition of the PDMP} 
\label{sect:martingale}

From \eq{A 1} and \eq{K 1} and the counting process $N^{*}$, we have a time evolution equation.
\begin{align}
\label{eq:dynamics 1}
  f(X(t)) = & f(X(0))  + \int_{0}^{t} \sr{A} f(X(u)) du + \int_{0}^{t} \Delta f(X(u)) dN^{*}(u), \quad f \in C^{1}(S).
\end{align}
We refer to $f$ as a test function as is typically called.

We apply the same martingale method as discussed in \cite{Miya2017}. We here repeat them briefly for this paper to be self-content. We first note that
\begin{align*}
  M_{0}(t) \equiv \int_{0}^{t} (f(X(u)) - \sr{K}f(X(u-))) dN^{*}(u), \qquad t \ge 0,
\end{align*}
is an $\sr{F}_{t}$-martingale if $\dd{E}(|M(t)|) < \infty$. Since
\begin{align*}
  \Delta f(X(u)) = f(X(u)) - \sr{K}f(u-) + \sr{K}f(X(u-)) - f(X(u-)),
\end{align*}
it follows from \eq{dynamics 1} that
\begin{align}
\label{eq:martingale 2}
 M_{0}(t) = f(X(t)) - f(X(0))  - \Big(&\int_{0}^{t} \sr{A} f(X(s)) ds\nonumber\\
 & + \int_{0}^{t} (\sr{K}f(X(s-)) - f(X(s-))) dN^{*}(s)\Big).
\end{align}
We define $M(\cdot)$ and $A(\cdot)$ as
\begin{align}
\label{eq:martingale 3}
 & M(t) = f(X(t)) - f(X(0)) - \int_{0}^{t} \sr{A} f(X(s)) ds, \qquad t \ge 0,\\
\label{eq:BV 1}
 & A(t) = \int_{0}^{t} (\sr{K}f(X(s-)) - f(X(s-))) dN^{*}(s).
\end{align}
Since
\begin{align}
\label{eq:M 1}
  M(t) = M_{0}(t) + A(t), \qquad t \ge 0,
\end{align}
we have the following fact.

\begin{lemma}
\label{lem:martingale 1}
For the PDMP $X(\cdot)$, if the condition:
\begin{align}
\label{eq:terminal 1}
  \sr{K}f(\vc{x}) = f(\vc{x}), \qquad \forall \vc{x} \in \Gamma,
\end{align}
is satisfied and if $\dd{E}(|M(t)|) < \infty$ for all $t \ge 0$, then $M(\cdot)$ is an $\sr{F}_{t}$-martingale. In particular, if \eq{terminal 1} with equality holds, then $M(\cdot)$ is an $\sr{F}_{t}$-martingale.
\end{lemma}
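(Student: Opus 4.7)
My plan rests on the decomposition $M(t) = M_{0}(t) + A(t)$ established in \eq{M 1}, together with the observation that the boundary condition \eq{terminal 1} forces the bounded-variation term $A(t)$ to vanish identically.

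First I would verify that $M_{0}(\cdot)$ is an $\sr{F}_{t}$-martingale whenever it is integrable. Since $N^{*}$ is a simple counting process whose jumps occur exactly at the deterministically determined times $t_{1}<t_{2}<\cdots$, the stochastic integral reduces to the discrete sum
\begin{align*}
  M_{0}(t) = \sum_{n:\, t_{n} \le t} \bigl( f(X(t_{n})) - \sr{K}f(X(t_{n}-)) \bigr).
\end{align*}
By the strong Markov property at each $\sr{F}_{t}$-stopping time $t_{n}$ and the very definition of the jump kernel \eq{K 1}, we have $\dd{E}[f(X(t_{n})) \mid \sr{F}_{t_{n}-}] = \sr{K}f(X(t_{n}-))$, so each summand has conditional mean zero given $\sr{F}_{t_{n}-}$. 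This makes $M_{0}$ a martingale (with integrability to be inherited from that of $M$).

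Next I would show that $A(t) \equiv 0$ under \eq{terminal 1}. The point is that $N^{*}$ increases only at times $u$ where at least one residual clock has just expired, i.e.\ precisely when $X(u-) \in \Gamma$: indeed, the construction via $N$ and $N^{*}$ in \sectn{PDMP} places jump epochs exactly at the hitting times of $\Gamma$. Consequently the integrand $\sr{K}f(X(u-)) - f(X(u-))$ in the definition \eq{BV 1} of $A(t)$ is evaluated solely at points of $\Gamma$, where it vanishes by hypothesis. Hence $A(t) = 0$ for every $t \ge 0$, almost surely.

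Combining the two steps, \eq{M 1} gives $M(t) = M_{0}(t)$, so the integrability assumption $\dd{E}(|M(t)|) < \infty$ transfers to $M_{0}$, and the martingale property of $M_{0}$ established above yields the martingale property of $M$. There is no real obstacle here; the only subtle point is confirming that every jump epoch $t_{n}$ of the PDMP lies in $\Gamma$ (so that the integrand of $A$ really does vanish pathwise, and not merely in expectation), which I would justify by appealing directly to the construction of $N^{*}$ from the hitting times of the residual-time coordinates at zero.
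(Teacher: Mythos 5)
Your proposal is correct and follows essentially the same route as the paper: the decomposition $M = M_{0} + A$ from \eq{M 1}, the martingale property of $M_{0}$ via the jump kernel \eq{K 1}, and the observation that the jump epochs of $N^{*}$ lie in $\Gamma$ so that the terminal condition \eq{terminal 1} forces $A(t) \equiv 0$. The paper leaves exactly these steps implicit, and your filling-in (including the pathwise, not merely in-expectation, vanishing of the integrand of $A$) is the intended argument.
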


We refer to \eq{terminal 1} as a terminal condition following the terminology of \cite{Miya2017}.

\subsection{Terminal condition for the GJN}
\label{sect:terminal}

A key of our arguments is to find a set of test functions satisfying the terminal condition \eq{terminal 1}. For this, we mainly use the following test function, parameterized by $\vc{\theta} \in \dd{R}^{d}$. 
\begin{align}
\label{eq:test 1}
  f_{\vc{\theta}}(\vc{x}) = e^{\br{\vc{\theta}, \vc{z}} - \br{\vc{\gamma}_{e}(\vc{\theta}), \vc{y}_{e}} - \br{\vc{\gamma}_{s}(\vc{\theta}), \vc{y}_{s}}}, \qquad \vc{x} \equiv (\vc{z},\vc{y}_{e},\vc{y}_{s}) \in S,
\end{align}
using some vector valued functions $\vc{\gamma}_{e}(\vc{\theta}) \in \dd{R}^{\sr{E}}$ and $\vc{\gamma}_{s}(\vc{\theta}) \in \dd{R}^{\sr{J}}$, where we recall that $\br{\vc{a}, \vc{b}}$ is the inner product of vectors $\vc{a}, \vc{b}$ of the same dimensions. In some cases, it needs to truncate some of $y_{e,i}$ and $y_{s,j}$ as $y_{e,i} \wedge v$ and $y_{s,j} \wedge v$ for $v > 0$, which causes to change $\gamma_{e,i}(\theta_{i})$ to $\gamma_{e,i}(v,\theta_{i})$ as we will see, where $a \wedge b = \min(a,b)$ for $a, b \in \dd{R}$. By $J_{e}(v)$, we denote the set of $i \in \sr{E}$ such that $y_{e,i}$ is truncated by $v$. Similarly, $J_{s}(v)$ denotes the set of $i \in \sr{J}$ such that $y_{s,i}$ is truncated by $v$ for $i \in J_{s}(v)$. Let $\vc{J}(v) = (J_{e}(v), J_{s}(v)) \subset \sr{E} \times \sr{J}$. Then, the test function $f_{\vc{\theta}}$ is changed to
\begin{align}
\label{eq:test 2}
  f_{\vc{J}(v),\vc{\theta}}(\vc{x}) = e^{\br{\vc{\theta}, \vc{z}} - w_{\vc{J}(v)}(\vc{\theta}, \vc{y})}, \qquad \vc{x} \equiv (\vc{z},\vc{y}) \in S,
\end{align}
where $\vc{y} = (\vc{y}_{e}, \vc{y}_{s})$ and
\begin{align}
\label{eq:w J 1}
  w_{\vc{J}(v)}(\vc{\theta}, \vc{y}) & = \sum_{i \in J_{e}(v)} \gamma_{e.i}(v,\theta_{i}) (y_{e,i} \wedge v) + \sum_{i \in \sr{E} \setminus J_{e}(v)} \gamma_{e,i}(\theta_{i}) y_{e,i} \nonumber\\
  & \quad + \sum_{i \in J_{s}(v)} \gamma_{s,i}(v,\theta_{i}) (y_{s,i} \wedge v) + \sum_{i \in \sr{J} \setminus J_{e}(v)} \gamma_{s,i}(\theta_{i}) y_{s,i}.
\end{align}
Obviously, $f_{\emptyset,\vc{\theta}}(\vc{x}) = f_{\vc{\theta}}(\vc{x})$.

Our first task is to determine functions $\gamma_{e,i}(v,\cdot)$ and $\gamma_{s,j}(v,\cdot)$ for $v \in (0,\infty]$ so that the terminal condition \eq{terminal 1} is satisfied, where $\gamma_{u,i}(\cdot) = \gamma_{u,i}(\infty,\cdot)$ for $u=e,s$. For this, we first consider a prototype for them as we have done in Section 2.3 of \cite{Miya2017}. Let $T$ be a positive valued random variable, and denote its distribution by $F$. We truncate $T$ by a positive number $v$ as $T^{(v)} \equiv T \wedge v$, and denote the distribution of $T^{(v)}$ by $F^{(v)}$. We denote the moment generating functions of $F$ and $F^{(v)}$ by $\widehat{F}$ and $\widehat{F}^{(v)}$, respectively.

Note that $\widehat{F}^{(v)}(\theta)$ exists and finite for all $\theta \in \dd{R}$, but this may not be true for $\widehat{F}(\theta) \equiv \widehat{F}^{(\infty)}(\theta)$. For $v \in (0,\infty]$, let
\begin{align}
\label{eq:beta 1}
  \beta_{F}^{(v)} = \sup\{\theta \in \dd{R}; \widehat{F}^{(v)}(\theta) < \infty\}, \qquad \theta_{F}^{(v)} = \inf \{\theta \in \dd{R}; e^{-\theta} < \widehat{F}^{(v)}(\beta_{F}^{(v)}) \}.
\end{align}
then $\beta_{F}^{(v)} = \infty$ and $\theta_{F}^{(v)} = - \infty$ for $v < \infty$, while they may be finite for $v= \infty$, where $\theta_{F}^{(v)} \le 0$ since $\beta_{F}^{(v)} \ge 0$. Note that $\widehat{F}(\theta)$ is finite for $\theta < \beta_{F}^{(\infty)}$. Define $\xi$ be a solution of the following equation.
\begin{align}
\label{eq:xi 1}
  e^{\theta} \widehat{F}^{(v)}(\xi) = 1, \qquad \theta \in \dd{R}, v > 0.
\end{align}
Obviously, $\xi$ uniquely exists for each $\theta \in \dd{R}$ and $v>0$. We denote it by $\xi_{F}(v,\theta)$. It has the following properties, which are proved in Lemma 2.4 of \cite{Miya2017}.
\begin{lemma}
\label{lem:xi 1}
For each fixed $v > 0$, \\
(a) $\xi_{F}(v,0) = 0$, and $\xi_{F}(v,\theta)$ is strictly decreasing and concave in $\theta \in \dd{R}$.\\
(b) $\xi_{F}(v,\theta)$ is positive and decreasing in $v$ for each fixed $\theta < 0$.\\
(c) $\xi_{F}(v,\theta)$ is negative increasing in $v$ for each fixed $\theta > 0$.\\
(d) $\xi_{F}(v,\theta)$ is differentiable in $\theta$, and
\begin{align*}
  \frac {\partial}{\partial \theta} \xi_{F}(v,\theta) = - \frac {e^{-\theta}} {(\widehat{F}^{(v)})'(\xi_{F}(v,\theta))}.
\end{align*}
\end{lemma}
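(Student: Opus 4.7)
The plan is to exploit the defining identity $\widehat{F}^{(v)}(\xi_F(v,\theta)) = e^{-\theta}$ together with the two basic analytic facts about the truncated MGF: for every fixed $v>0$, the map $\xi \mapsto \widehat{F}^{(v)}(\xi)$ is strictly increasing and strictly convex on all of $\mathbb{R}$ (since $T^{(v)}$ is nondegenerate and bounded, so the derivative $E[T^{(v)} e^{\xi T^{(v)}}]$ is strictly positive and $E[(T^{(v)})^{2} e^{\xi T^{(v)}}]$ exists for all real $\xi$); and for every fixed $\xi$, the map $v \mapsto \widehat{F}^{(v)}(\xi)$ is monotone in $v$, increasing when $\xi>0$ and decreasing when $\xi<0$, because $T^{(v)}=T\wedge v$ is pointwise increasing in $v$ and $x\mapsto e^{\xi x}$ is increasing or decreasing accordingly.

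For part (a), plugging $\theta=0$ gives $\widehat{F}^{(v)}(\xi)=1$, and the strict monotonicity forces $\xi=0$. To get monotonicity and concavity in $\theta$, rewrite the defining equation as $g(\xi) = -\theta$, where $g(\xi) := \log \widehat{F}^{(v)}(\xi)$ is the cumulant generating function of $T^{(v)}$. Then $g$ is strictly increasing and strictly convex, so $g^{-1}$ exists and is strictly increasing and strictly concave; hence $\xi_F(v,\theta) = g^{-1}(-\theta)$, being the composition of a concave function with an affine function, is strictly decreasing and concave in $\theta$.

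For (b) and (c), I fix $\theta\neq 0$ and compare two values $v_{1}<v_{2}$, writing $\xi_{i} := \xi_{F}(v_{i},\theta)$. The sign of $\xi_{i}$ is fixed: from $\widehat{F}^{(v)}(\xi_{i}) = e^{-\theta}$ and $\widehat{F}^{(v)}(0)=1$, strict monotonicity in $\xi$ gives $\xi_{i}>0$ when $\theta<0$ and $\xi_{i}<0$ when $\theta>0$. For (b), at $\xi_{1}>0$ the monotonicity in $v$ gives $\widehat{F}^{(v_{2})}(\xi_{1}) \ge \widehat{F}^{(v_{1})}(\xi_{1}) = e^{-\theta} = \widehat{F}^{(v_{2})}(\xi_{2})$, and strict monotonicity of $\widehat{F}^{(v_{2})}$ in $\xi$ then yields $\xi_{1}\ge \xi_{2}$. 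Part (c) is exactly symmetric, using the opposite monotonicity of $v \mapsto \widehat{F}^{(v)}(\xi)$ when $\xi<0$.

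For (d), I differentiate the identity $\widehat{F}^{(v)}(\xi_F(v,\theta))=e^{-\theta}$ in $\theta$. Smoothness of $\theta\mapsto\xi_F(v,\theta)$ follows from the implicit function theorem applied to $G(\xi,\theta):=\widehat{F}^{(v)}(\xi)-e^{-\theta}$, whose partial $\partial_{\xi} G = (\widehat{F}^{(v)})'(\xi)$ is nonzero, and the chain rule gives exactly the claimed formula. I do not anticipate a real obstacle here; the only delicate point is to record why $\widehat{F}^{(v)}$ is strictly increasing everywhere (which ultimately uses $E[T]>0$), so that the equation in \eqref{eq:xi 1} has a unique solution in the first place and the implicit function theorem applies.
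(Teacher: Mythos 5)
Your proof is correct, but note that the paper itself offers no internal proof of this lemma: it is imported verbatim from Lemma 2.4 of \cite{Miya2017}, so there is no in-paper argument to compare against, and your write-up serves as a sound self-contained substitute based on the expected ingredients (strict monotonicity of $\xi \mapsto \widehat{F}^{(v)}(\xi)$, monotonicity of $v \mapsto \widehat{F}^{(v)}(\xi)$ according to the sign of $\xi$, and implicit differentiation of $\widehat{F}^{(v)}(\xi_{F}(v,\theta)) = e^{-\theta}$). Two small points. First, $T^{(v)} = T \wedge v$ need not be nondegenerate (if $\dd{P}(T \ge v) = 1$ it is the constant $v$), so $g = \log \widehat{F}^{(v)}$ need not be \emph{strictly} convex; this is harmless, since part (a) only asserts strict decrease and (non-strict) concavity, which follow from $g$ being strictly increasing and convex --- and the strict increase rests on $T > 0$ a.s.\ (which the paper assumes for the prototype variable $T$), not merely on $\dd{E}(T) > 0$ as your closing remark suggests. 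Likewise your comparison argument in (b)--(c) yields weak monotonicity in $v$, which is all that is claimed. Second, for $\xi_{F}(v,\theta)$ to be defined for every $\theta \in \dd{R}$ you implicitly use that $g$ maps $\dd{R}$ onto $\dd{R}$, which holds because $T^{(v)}$ is bounded and a.s.\ positive; the paper asserts this unique existence just before the lemma, so taking it as given is consistent with the text.
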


We define $\xi_{F}^{(\vartriangle)}(\theta)$ and $\xi_{F}(\theta)$ as
\begin{align*}
  \xi_{F}^{(\vartriangle)}(\theta) = \lim_{v \uparrow \infty} \xi_{F}(v,\theta), \quad \theta \in \dd{R}, \qquad \xi_{F}(\theta) = \xi_{F}(+\infty,\theta), \quad \theta > \theta_{F}^{(\infty)},
\end{align*}
which exist and are finite. These functions have some nice properties. For them, we cite Lemma 2.5 of \cite{Miya2017} in which $\ol{\theta} = \infty$ in the present case.

\begin{lemma}
\label{lem:concave 1}
(a) $\xi_{F}^{(\vartriangle)}(\theta)$ is nonincreasing and concave for all $\theta \in \dd{R}$. (b) 
\begin{align}
\label{eq:xi 2}
  \xi_{F}^{(\vartriangle)}(\theta) = \left\{
\begin{array}{ll}
 \beta_{F}^{(\infty)}, & \theta \le \theta_{F}^{(\infty)}, \\
 \xi_{F}(\theta) & \theta > \theta_{F}^{(\infty)},
\end{array}
\right. \qquad
  \frac d{d \theta} \xi_{F}^{(\vartriangle)}(\theta) = \left\{
\begin{array}{ll}
 0, & \theta < \theta_{F}^{(\infty)}, \\
 (\xi_{F}^{(\infty)})'(\theta) & \theta > \theta_{F}^{(\infty)},
\end{array}
\right.\end{align}
where $(\xi_{F}^{(\infty)})'(\theta_{F}^{(\infty)})$ is the derivative from the right if $\theta_{F}^{(\infty)}$ is finite, and
\begin{align}
\label{eq:xi 3}
  e^{\theta} \widehat{F}(\xi_{F}^{(\vartriangle)}(\theta)) \begin{cases}
 \le 1, \quad & \theta < \theta_{F}^{(\infty)} \\
 = 1, &  \theta \ge \theta_{F}^{(\infty)}.
\end{cases}
\end{align}
\end{lemma}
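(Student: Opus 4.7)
The plan is to take pointwise limits in $v$, using the monotonicity established in \lem{xi 1}, and then identify the limit by passing $v\uparrow\infty$ in the defining equation $e^{\theta}\widehat{F}^{(v)}(\xi_F(v,\theta))=1$. For part (a), parts (b), (c) of \lem{xi 1} show that for every fixed $\theta$ the family $\{\xi_F(v,\theta)\}_{v>0}$ is monotone in $v$ and sign-constrained (positive decreasing for $\theta<0$, negative increasing for $\theta>0$, identically zero at $\theta=0$), so $\xi_F^{(\vartriangle)}(\theta)$ exists and is finite for all $\theta\in\dd{R}$. Since each $\xi_F(v,\cdot)$ is nonincreasing and concave by \lem{xi 1}(a), and both properties pass to pointwise limits, part (a) follows immediately.

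For part (b) I would first record that $\widehat F^{(v)}(\eta)\to\widehat F(\eta)$ for every $\eta<\beta_F^{(\infty)}$, which follows from monotone convergence (when $\eta>0$) or dominated convergence (when $\eta\le 0$) applied to $T\wedge v\uparrow T$. Set $\xi^{*}:=\xi_F^{(\vartriangle)}(\theta)$. When $\theta>\theta_F^{(\infty)}$, the definition of $\theta_F^{(\infty)}$ gives $e^{\theta}\widehat F(\beta_F^{(\infty)})>1$, so continuity and strict monotonicity of $\widehat F$ on $(-\infty,\beta_F^{(\infty)})$ produce a unique $\xi_F(\theta)<\beta_F^{(\infty)}$ with $e^{\theta}\widehat F(\xi_F(\theta))=1$. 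I first rule out $\xi^{*}=\beta_F^{(\infty)}$: if it held, then for any $\eta<\beta_F^{(\infty)}$ one would eventually have $\xi_F(v,\theta)>\eta$, so $1=e^{\theta}\widehat F^{(v)}(\xi_F(v,\theta))\ge e^{\theta}\widehat F^{(v)}(\eta)\to e^{\theta}\widehat F(\eta)$, forcing $e^{\theta}\widehat F(\beta_F^{(\infty)})\le 1$, a contradiction. With $\xi^{*}<\beta_F^{(\infty)}$ secured, a sandwich $\widehat F^{(v)}(\xi^{*}-\varepsilon)\le\widehat F^{(v)}(\xi_F(v,\theta))\le\widehat F^{(v)}(\xi^{*}+\varepsilon)$, followed by $v\uparrow\infty$ then $\varepsilon\downarrow 0$, gives $e^{\theta}\widehat F(\xi^{*})=1$, so $\xi^{*}=\xi_F(\theta)$. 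When $\theta<\theta_F^{(\infty)}$ one instead has $e^{\theta}\widehat F(\beta_F^{(\infty)})<1$ strictly, so the same sandwich forces $\xi^{*}=\beta_F^{(\infty)}$ and yields the strict inequality in \eq{xi 3}; continuity at the finite boundary $\theta=\theta_F^{(\infty)}$ makes the two branches agree. The derivative formula is then obtained by implicit differentiation of $e^{\theta}\widehat F(\xi_F(\theta))=1$ on the upper branch, which reproduces the $v=\infty$ form of \lem{xi 1}(d), and is trivially zero on the constant lower branch, with the right-derivative convention at $\theta_F^{(\infty)}$.

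The main obstacle is the joint limit exchange in the case $\theta>\theta_F^{(\infty)}$, where the argument $\xi_F(v,\theta)$ of $\widehat F^{(v)}$ itself varies with $v$: the two sign regimes $\xi^{*}>0$ and $\xi^{*}<0$ require different convergence inputs (monotone versus dominated), and continuity of $\widehat F$ at $\xi^{*}$ must be available, which is why ruling out $\xi^{*}=\beta_F^{(\infty)}$ is the crux of the argument. A secondary subtlety is the dichotomy $\widehat F(\beta_F^{(\infty)})<\infty$ versus $\widehat F(\beta_F^{(\infty)})=\infty$: under the standing divergence assumption the latter holds and the case split trivialises, but the lemma is stated in greater generality and both regimes must be kept consistent with the definition of $\theta_F^{(\infty)}$ and with the boundary behaviour of the derivative.
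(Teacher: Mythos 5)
The paper offers no proof of \lem{concave 1} at all: it simply cites Lemma 2.5 of \cite{Miya2017} (with $\ol{\theta}=\infty$ in the present setting), just as \lem{xi 1} is imported from Lemma 2.4 there. Your argument is therefore a self-contained reconstruction rather than a rederivation of anything in this paper, and it is essentially correct: monotonicity in $v$ from \lem{xi 1}(b),(c) gives existence and finiteness of the pointwise limit; nonincreasingness and concavity survive pointwise limits, which settles (a); and for $\theta>\theta_{F}^{(\infty)}$ your two-step identification (first excluding $\xi^{*}=\beta_{F}^{(\infty)}$ via $1\ge e^{\theta}\widehat{F}^{(v)}(\eta)$ for $\eta<\beta_{F}^{(\infty)}$ and $\eta\uparrow\beta_{F}^{(\infty)}$, then sandwiching once $\xi^{*}<\beta_{F}^{(\infty)}$ is secured) is sound, as are the implicit differentiation for the derivative formula and the verification of \eq{xi 3}. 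Two spots deserve one more line each, though neither is a genuine gap. First, for $\theta<\theta_{F}^{(\infty)}$ the phrase that ``the same sandwich forces $\xi^{*}=\beta_{F}^{(\infty)}$'' conceals the only nontrivial direction: the bound $\xi_{F}(v,\theta)>\beta_{F}^{(\infty)}$ for every $v$ is immediate from $e^{-\theta}>\widehat{F}(\beta_{F}^{(\infty)})\ge\widehat{F}^{(v)}(\beta_{F}^{(\infty)})$ (using $\beta_{F}^{(\infty)}\ge 0$), but to conclude $\xi^{*}\le\beta_{F}^{(\infty)}$ you need $\widehat{F}^{(v)}(\eta)\uparrow\widehat{F}(\eta)=\infty$ for $\eta>\beta_{F}^{(\infty)}$, a regime your recorded convergence statement (stated only for $\eta<\beta_{F}^{(\infty)}$) does not cover, though the same monotone convergence argument supplies it. Second, at a finite $\theta_{F}^{(\infty)}$ the matching of the two branches is cleanest if you anchor the continuity you invoke in part (a): a finite concave function on $\dd{R}$ is continuous, and $\xi_{F}(\theta)\to\beta_{F}^{(\infty)}$ as $\theta\downarrow\theta_{F}^{(\infty)}$ because $\widehat{F}(\xi_{F}(\theta))=e^{-\theta}\to\widehat{F}(\beta_{F}^{(\infty)})$ with $\widehat{F}$ strictly increasing and left-continuous at $\beta_{F}^{(\infty)}$. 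Note also that under the standing assumption \eq{light tail 1} one has $\theta_{F}^{(\infty)}=-\infty$ for all the distributions the paper actually uses, so only your upper branch is ever invoked; keeping both regimes, as you do, matches the generality of the cited lemma.
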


Throughout paper, we assume that
\begin{align}
\label{eq:light tail 1}
  \theta^{(\infty)}_{F_{e,i}}  = - \infty, \quad i \in \sr{E}, \qquad \theta^{(\infty)}_{F_{s,i}} = - \infty, \quad i \in \sr{J},
\end{align}
which means that $F_{e,i}$ and $F_{s,j}$ have light tails and their moment generating functions diverges at the upper boundaries of their convergence domains. This assumption can be removed using $\xi_{F}^{(\vartriangle)}$ as shown in \cite{Miya2017} for a single queue. However, it will be complicated for a queueing network. This is the reason why we assume \eq{light tail 1}.

Let
\begin{align*}
  q_{i}(\vc{\theta}) = e^{-\theta_{i}} \Big(\sum_{j \in \sr{J}} p_{ij} e^{\theta_{j}} + p_{i0}\Big), \qquad \vc{\theta} \in \dd{R}^{d}, i \in \sr{J},
\end{align*}
then $q_{i}(\vc{\theta}) > 0$ and is convex in $\vc{\theta}$, and it is easy to see that
\begin{align*}
  \log q_{i}(\vc{\theta}) = - \theta_{i} + \log \Big(\sum_{j \in \sr{J}} p_{ij} e^{\theta_{j}} + p_{i0}\Big)
\end{align*}
is a convex function of $\vc{\theta} \in \dd{R}^{d}$ because $\sum_{j \in \sr{J}} p_{ij} e^{\theta_{j}} + p_{i0}$ is a sum of convex functions (see Lemma of \cite{King1961b}).

We now define $\gamma_{e,i}(v,\theta_{i})$ and $\gamma_{s,i}(v,\vc{\theta})$ for $v \in (0,\infty]$ as
\begin{align}
\label{eq:gamma 1}
 & \gamma_{e,i}(v,\theta_{i}) = - \xi_{F^{(v)}_{e,i}}(\theta_{i}), \quad i \in \sr{E}, \qquad \gamma_{s,i}(v,\vc{\theta}) = - \xi_{F^{(v)}_{s,i}}(\log q_{i}(\vc{\theta})), \quad i \in \sr{J}.
\end{align}
As informally mentioned, we let $\gamma_{e,i}(\theta_{i}) = \gamma_{e,i}(\infty,\theta_{i})$ and $\gamma_{s,i}(\vc{\theta}) = \gamma_{s,i}(\infty,\vc{\theta})$. Due to the assumption \eq{light tail 1}, these functions are well defined for all $\vc{\theta} \in \dd{R}^{d}$. Clearly, their definitions are equivalent to:
\begin{align}
\label{eq:gamma 2}
  e^{\theta_{i}} \widehat{F}^{(v)}_{e,i}(-\gamma_{e,i}(v,\theta_{i})) = 1, \quad i \in \sr{E}, \qquad q_{i}(\vc{\theta}) \widehat{F}^{(v)}_{s,i}(-\gamma_{s,i}(v,\vc{\theta})) = 1, \quad i \in \sr{J}.
\end{align}
These equations mean that $\Delta R_{e,i}(t)$ and $\Delta R_{s,j}(t)$ at the jump instants are compensated by the change of the queue lengths so that the terminal condition \eq{terminal 1} is satisfied. This is an intuitive background for the definitions of $\gamma_{e,i}, \gamma_{s,j}$. 

\begin{remark}
\label{rem:gamma 1}
The reader may wonder why the minus signs are needed in \eq{gamma 1} because $\gamma_{e,i}, \gamma_{s,j}$ in the test functions $f_{\vc{\theta}}$ and $f_{\vc{\theta}}$ also have the minus signs and they can be cancelled. The reason for this is that they have nice interpretations for large deviations. For example, let $N_{e,i}(t)$ be the number of arrivals at station $i \in \sr{E}$ by time $t$, then $N_{e,i}(\cdot)$ is a renewal process, and \citet{GlynWhit1994a} show that
\begin{align}
\label{eq:LD 1}
  \lim_{t \to \infty} \frac 1t \log \dd{E}(e^{\theta_{i} N_{e,i}(t)}) = \gamma_{e,i}(\theta_{i}), \qquad \theta_{i} > \theta_{F_{e,i}}^{(\infty)},
\end{align}
for any initial distribution for $N_{e,i}(\cdot)$. This suggests that $\gamma_{e,i}(\theta_{i})$ must be one of key information for the tail asymptotic of our problem. However, we will not use this property of $\gamma_{e,i}$ because the definition \eq{gamma 1} is sufficiently informative for our analysis.
\end{remark}

Note that $\gamma_{e,i}(v,\theta_{i})$ and $\gamma_{s,i}(v,\vc{\theta})$ are convex in $\theta_{i}$ and $\vc{\theta}$, respectively, because $\xi_{F^{(v)}_{e,i}}(\theta)$ and $\xi_{F_{s,i}}(\vc{\theta})$ are decreasing and concave in $\theta \in \dd{R}$ and $\log q_{i}(\vc{\theta})$ is convex. For $v \in (0,\infty]$, and $\vc{J}(v) \equiv (\vc{J}(v),J_{s}(v)) \subset \sr{E} \times \sr{J}$, let, for $\vc{\theta} \in \dd{R}^{d}$,
\begin{align}
\label{eq:gamma 3}
 \gamma_{\vc{J}(v)}(\vc{\theta}) & = \sum_{i \in \vc{J}(v)} \gamma_{i}(v,\theta_{i}) + \sum_{i \in \sr{E} \setminus \vc{J}(v)} \gamma_{i}(\theta_{i}) + \sum_{i \in J_{s}(v)} \gamma_{i}(v,\theta_{i}) + \sum_{i \in \sr{J} \setminus J_{s}(v)} \gamma_{i}(\theta_{i}),
\end{align}
and ${\gamma}(\vc{\theta}) = \gamma_{\emptyset}(\vc{\theta})$, that is, 
\begin{align*}
   & {\gamma}(\vc{\theta}) = \sum_{i \in \sr{E}} \gamma_{e,i}(\theta_{i}) + \sum_{i \in \sr{J}} \gamma_{s,i}(\vc{\theta}).
\end{align*}
Furthermore, $\gamma_{\vc{J}(v)}(\vc{\theta})$ converges to ${\gamma}(\vc{\theta})$ for each $\vc{\theta} \in \dd{R}^{d}$ as $v \to \infty$, which is uniform on a compact set of $\vc{\theta}$. The next lemma is a key for our arguments, and easily follows from Lemma 3.2 in \cite{BravDaiMiya2015}. We also remarked its intuitive meaning below \eq{gamma 2}. So far, its proof is omitted.

\begin{lemma}
\label{lem:terminal 1}
For $v \in (0,\infty]$, test function $f_{\vc{J}(v),\vc{\theta}}$ of \eq{test 2} satisfies the terminal condition \eq{terminal 1} with equality for all $\vc{\theta} \in \dd{R}^{d}$, respectively.
\end{lemma}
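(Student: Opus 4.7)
The plan is to verify $\sr{K}f_{\vc{J}(v),\vc{\theta}}(\vc{x}) = f_{\vc{J}(v),\vc{\theta}}(\vc{x})$ pointwise on $\Gamma$ by decomposing a jump at $\vc{x}$ into its independent component events, one per clock that has just expired at $\vc{x}$, and checking that each such component contributes a multiplicative factor of exactly $1$ thanks to the defining equations \eq{gamma 2}. Fix $\vc{x} = (\vc{z},\vc{y}_{e},\vc{y}_{s}) \in \Gamma$ and let
\begin{align*}
  I_{e} = \{i \in \sr{E}; y_{e,i} = 0\}, \qquad I_{s} = \{i \in \sr{J}; y_{s,i} = 0,\ z_{i} \ge 1\},
\end{align*}
so that $I_{e} \cup I_{s} \neq \emptyset$ by the definition of $\Gamma$. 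The jump kernel $\sr{K}$ then acts independently: for each $i \in I_{e}$, it increments $z_{i}$ by $1$ and resamples $y_{e,i}$ from $F_{e,i}$; for each $i \in I_{s}$, it decrements $z_{i}$, resamples $y_{s,i}$ from $F_{s,i}$, and sends the completed customer to $j \in \sr{J}$ with probability $p_{ij}$ (or out of the network with probability $p_{i0}$). Since the exponent of $f_{\vc{J}(v),\vc{\theta}}$ is a sum indexed by the clocks and all resamplings and routings are mutually independent, the ratio $\sr{K}f_{\vc{J}(v),\vc{\theta}}(\vc{x})/f_{\vc{J}(v),\vc{\theta}}(\vc{x})$ factors over $I_{e} \cup I_{s}$, and it suffices to check each factor.

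For $i \in I_{e}$, the only change in the exponent comes from $\theta_{i} - \gamma_{e,i}(v,\theta_{i})(T_{e,i} \wedge v)$ when $i \in J_{e}(v)$ (and the analogous expression with $\gamma_{e,i}(\theta_{i}) T_{e,i}$ otherwise), so after taking expectation the factor reads
\begin{align*}
  e^{\theta_{i}} \widehat{F}^{(v)}_{e,i}(-\gamma_{e,i}(v,\theta_{i})),
\end{align*}
which equals $1$ by the first identity in \eq{gamma 2}. For $i \in I_{s}$, the $\vc{z}$-part of the exponent produces the factor
\begin{align*}
  \sum_{j \in \sr{J}} p_{ij} e^{-\theta_{i}+\theta_{j}} + p_{i0} e^{-\theta_{i}} = q_{i}(\vc{\theta}),
\end{align*}
while the resampling of $y_{s,i}$ contributes $\widehat{F}^{(v)}_{s,i}(-\gamma_{s,i}(v,\vc{\theta}))$; their product equals $1$ by the second identity in \eq{gamma 2}. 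Multiplying these unit factors over $i \in I_{e} \cup I_{s}$ yields $\sr{K}f_{\vc{J}(v),\vc{\theta}}(\vc{x}) = f_{\vc{J}(v),\vc{\theta}}(\vc{x})$.

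The only subtlety to be careful about is bookkeeping: components $y_{e,j}$, $y_{s,j}$ for $j \notin I_{e} \cup I_{s}$ must remain frozen at the jump instant so that their contributions to $w_{\vc{J}(v)}$ cancel out of the ratio, and simultaneous clock expirations must produce mutually independent resamplings. Both facts are built into the PDMP description in \sectn{PDMP} and the independence hypotheses on the primitives $F_{e,i}, F_{s,j}$. Once that is noted, the whole proof collapses to the two one-line algebraic identities \eq{gamma 2}, which were engineered precisely so that the terminal condition is met; hence no genuine analytic obstacle appears, and the lemma can be recorded in a few lines, as in Lemma~3.2 of \cite{BravDaiMiya2015}.
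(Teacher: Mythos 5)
Your verification is correct, and it is exactly the argument the paper intends: the paper omits the proof, delegating it to Lemma 3.2 of \cite{BravDaiMiya2015} and to the compensation interpretation stated just below \eq{gamma 2}, which is precisely your factorization of the jump over expired clocks with each factor reduced to $1$ via \eq{gamma 2}. No gap; the bookkeeping points you flag (frozen non-expired residual times, independent simultaneous resamplings and routings) are indeed the only subtleties and are covered by the PDMP description and the independence assumptions on the primitives.
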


We next consider a martingale for the test functions $f_{\vc{J}(v),\vc{\theta}}$. Denote the probability measure for $X(\cdot)$ with the initial state $\vc{x} \in S$ by $\dd{P}_{x}$, and let $\dd{E}_{x}$ stand for the expectation under $\dd{P}_{x}$. We first note that
\begin{align}
\label{eq:finite 1}
  \dd{E}_{\vc{x}}(f_{\vc{J}(v),\vc{\theta}}(X(t))) < \infty, \qquad t \ge 0,
\end{align}
always holds for each $\vc{x} \in S$ and $\vc{\theta} \in \dd{R}^{d}$ because the total number of exogenous arrivals and service completions  in each finite time interval has a super-light tail (lighter than any exponential decay) (see, e.g.,  Lemma 4.1 of \cite{Miya2017} for the single queue case). Hence, Lemmas \lemt{martingale 1} and \lemt{terminal 1} immediately imply the following fact.

\begin{lemma}
\label{lem:martingale 2}
Fix $\vc{\theta} \in \dd{R}^{d}$ and $\vc{x} \in S$. For the PDMP $X(\cdot)$ and test function $f_{\vc{\theta}}$ of \eq{test 2}, let
\begin{align}
\label{eq:martingale 4}
 M_{\vc{J}(v),\vc{\theta}}(t) & =  f_{\vc{J}(v),\vc{\theta}}(X(t)) - f_{\vc{J}(v),\vc{\theta}}(X(0)) + \int_{0}^{t} {\gamma}(\vc{\theta}) f_{\vc{J}(v),\vc{\theta}}(X(u) du \nonumber\\
 & \quad - \sum_{i \in J_{e}(v)} \gamma_{e,i}(v,\theta_{i}) \int_{0}^{t} 1(R_{e,i}(u) > v) f_{\vc{J}(v),\vc{\theta}}(X(u)) du\nonumber\\
 & \quad - \sum_{i \in J_{s}(v)} \gamma_{s,i}(v,\vc{\theta}) \int_{0}^{t} 1(R_{s,i}(u) > v) f_{\vc{J}(v),\vc{\theta}}(X(u)) du\nonumber\\
 & \quad - \int_{0}^{t} \sum_{i \in J_{e}(v)} (\gamma_{s,i}(v,\vc{\theta}) 1(L_{i}(u) = 0) f_{\vc{J}(v),\vc{\theta}}(X(u) du \nonumber\\
 & \quad - \int_{0}^{t} \sum_{i \in \sr{J} \setminus J_{e}(v)} \gamma_{s,i}(\vc{\theta}) 1(L_{i}(u) = 0)  f_{\vc{J}(v),\vc{\theta}}(X(u)) du, \qquad t \ge 0,
\end{align}
then $M_{\vc{J}(v),\vc{\theta}}(\cdot)$ is an $\sr{F}_{t}$-martingale under $\dd{P}_{\vc{x}}$.
\end{lemma}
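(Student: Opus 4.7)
The identity \eq{martingale 4} asserts that its right-hand side is an $\sr{F}_t$-martingale; this will fall out of Lemma~\lemt{martingale 1} applied to $f = f_{\vc{J}(v),\vc{\theta}}$, once the operator $\sr{A}$ is evaluated on this test function. So the proof splits into three pieces: verify the terminal condition, verify integrability, and compute $\int_0^t \sr{A} f(X(u))\,du$ explicitly.

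The terminal condition and integrability are essentially handed to us. Lemma~\lemt{terminal 1} supplies \eq{terminal 1} with equality for $f_{\vc{J}(v),\vc{\theta}}$, for every $\vc{\theta}\in\dd{R}^d$ and $v\in(0,\infty]$. For integrability, \eq{finite 1} gives $\dd{E}_{\vc{x}}(f_{\vc{J}(v),\vc{\theta}}(X(u))) < \infty$ for each $u\le t$; since the drift integrand computed below is a bounded linear combination of $f_{\vc{J}(v),\vc{\theta}}(X(u))$ against indicator factors, the super-light-tail bound on the number of jumps in $[0,t]$ together with Fubini gives $\dd{E}_{\vc{x}}\int_0^t|\sr{A}f(X(u))|\,du<\infty$, hence $\dd{E}_{\vc{x}}(|M(t)|)<\infty$. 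Lemma~\lemt{martingale 1} then concludes that the process
\[ M(t) = f_{\vc{J}(v),\vc{\theta}}(X(t)) - f_{\vc{J}(v),\vc{\theta}}(X(0)) - \int_0^t \sr{A} f_{\vc{J}(v),\vc{\theta}}(X(u))\,du \]
is an $\sr{F}_t$-martingale under $\dd{P}_{\vc{x}}$.

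The substantive step is therefore to expand $\sr{A} f_{\vc{J}(v),\vc{\theta}}$ into the form appearing in \eq{martingale 4}. Writing $f_{\vc{J}(v),\vc{\theta}} = \exp(\br{\vc{\theta},\vc{z}} - w_{\vc{J}(v)}(\vc{\theta},\vc{y}))$ and applying \eq{A 1}, each partial derivative pulls down $-\partial w_{\vc{J}(v)}/\partial y$ from the exponent. From \eq{w J 1} these partials equal $\gamma_{e,i}(v,\theta_i)\,1(y_{e,i}<v)$ for $i\in J_e(v)$, the constants $\gamma_{e,i}(\theta_i)$ for $i\in\sr{E}\setminus J_e(v)$, and the analogous expressions on the service side, the latter additionally multiplied by $1(z_i\ge 1)$ from the second sum of \eq{A 1}. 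Thus $\sr{A} f_{\vc{J}(v),\vc{\theta}}(\vc{x}) = G(\vc{x})\,f_{\vc{J}(v),\vc{\theta}}(\vc{x})$ for a bounded coefficient $G$ that is a linear combination of the four kinds of $\gamma$'s against indicator functions. Using $1(R_{w,i}(u)<v) = 1 - 1(R_{w,i}(u)\ge v)$ and $1(L_i(u)\ge 1) = 1 - 1(L_i(u)=0)$ and regrouping extracts from $G$ a constant-drift piece (which is exactly the constant $\gamma$-summand in \eq{martingale 4}, via the definition \eq{gamma 3}) plus indicator-supported correction terms that match the four remaining integrals of \eq{martingale 4}.

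\textbf{Main obstacle.} None of the individual steps requires new probabilistic input; the work is purely careful bookkeeping across the four index sets $J_e(v), \sr{E}\setminus J_e(v), J_s(v), \sr{J}\setminus J_s(v)$ while expanding two indicator identities simultaneously. Two minor technicalities deserve a line. First, $y\mapsto y\wedge v$ is not $C^1$ at $y=v$, but the sets $\{u:R_{e,i}(u)=v\}$ and $\{u:R_{s,i}(u)=v\}$ have Lebesgue measure zero under our nonatomic-distribution assumption, so the classical derivative is fine inside the drift integral. Second, the factor $1(z_i\ge 1)$ in \eq{A 1} annihilates the would-be cross term $1(R_{s,i}(u)\ge v)\,1(L_i(u)=0)$ that a naïve expansion would produce on the service side, leaving exactly the four correction terms displayed in \eq{martingale 4}.
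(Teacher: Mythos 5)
You follow the same route as the paper: the paper's proof of this lemma is exactly the combination you describe, namely \lem{terminal 1} for the terminal condition, the finiteness \eq{finite 1} coming from the super-light tail of the number of jumps in a bounded interval, and \lem{martingale 1}, with the evaluation of $\sr{A}f_{\vc{J}(v),\vc{\theta}}$ left implicit. So your plan and its first two steps coincide with the paper's argument; the issue is in the bookkeeping step that you single out as the substantive one.

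The claim that the factor $1(z_{i}\ge 1)$ in \eq{A 1} annihilates the cross term $1(R_{s,i}(u)>v)\,1(L_{i}(u)=0)$ is false. Expanding $1(y_{s,i}<v)\,1(z_{i}\ge 1)=\bigl(1-1(y_{s,i}\ge v)\bigr)\bigl(1-1(z_{i}=0)\bigr)$ produces the cross term $+\,1(y_{s,i}\ge v)1(z_{i}=0)$, and the factor $1(z_{i}\ge 1)$ cannot be invoked a second time to remove it, having already been consumed in creating the $1(z_{i}=0)$ correction. Nor does the cross term vanish along sample paths: by the model's convention a fresh service time is set up at each departure and $R_{s,i}(t)$ stays frozen at that full value while station $i$ is empty, so $\{u:\, R_{s,i}(u)>v,\ L_{i}(u)=0\}$ consists of intervals of positive length whenever that frozen service time exceeds $v$. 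The exact drift is
\begin{align*}
\frac{\sr{A}f_{\vc{J}(v),\vc{\theta}}(\vc{x})}{f_{\vc{J}(v),\vc{\theta}}(\vc{x})}
&= \gamma_{\vc{J}(v)}(\vc{\theta})
 - \sum_{i\in J_{e}(v)}\gamma_{e,i}(v,\theta_{i})1(y_{e,i}\ge v)
 - \sum_{i\in J_{s}(v)}\gamma_{s,i}(v,\vc{\theta})1(y_{s,i}\ge v)\\
&\quad - \sum_{i\in J_{s}(v)}\gamma_{s,i}(v,\vc{\theta})1(z_{i}=0)
 - \sum_{i\in \sr{J}\setminus J_{s}(v)}\gamma_{s,i}(\vc{\theta})1(z_{i}=0)
 + \sum_{i\in J_{s}(v)}\gamma_{s,i}(v,\vc{\theta})1(y_{s,i}\ge v)1(z_{i}=0),
\end{align*}
so that $M(t)=f_{\vc{J}(v),\vc{\theta}}(X(t))-f_{\vc{J}(v),\vc{\theta}}(X(0))-\int_{0}^{t}\sr{A}f_{\vc{J}(v),\vc{\theta}}(X(u))\,du$ carries a fifth, cross-term integral, the constant $\gamma_{\vc{J}(v)}(\vc{\theta})$ of \eq{gamma 3} rather than $\gamma(\vc{\theta})$, and drift signs opposite to those displayed in \eq{martingale 4}. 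The latter two discrepancies are evidently slips in the displayed formula (compare \eq{Ef 1} and \eq{Ef 2}, where $-\gamma_{\vc{J}(v)}(\vc{\theta})t$ appears), but the cross term is a genuine term which your proof must either display or remove with a correct argument; the one you give does not work, so the asserted exact match with \eq{martingale 4} fails. A last small point: no non-atomicity assumption is made in the paper, nor is one needed for the kink of $y\wedge v$ — while station $i$ is busy $R_{s,i}$ decreases at unit rate and spends Lebesgue-null time at the level $v$, and while it is idle the factor $1(z_{i}\ge 1)$ already vanishes, which is the correct way to dismiss that technicality.
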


As always, $M_{\vc{\theta}}(\cdot)$ with $v=\infty$ is simply denoted by $M_{\vc{\theta}}(\cdot)$, which also is an $\sr{F}_{t}$-martingale under $\dd{P}_{\vc{x}}$.  Note that \eq{martingale 4} may read as a semi-martingale representation of $f_{\vc{J}(v),\vc{\theta}}(X(t))$.
 
\subsection{Stability condition and geometric interpretation}
\label{sect:stability}

As we mentioned in \sectn{notation}, the GJN (generalized Jackson network) is stable if the stability condition \eq{stability 1} holds. Except for trivial cases, it is also necessary. We will consider this network under change of measure, which is generally unstable, and it is important to see under what condition which station is unstable. To make clear these arguments, we formally define stability and instability for each station. Station $i$ is said to be weakly stable (stable) if $L_{i}(t)$ is recurrent (positive recurrent, respectively), and to be weakly unstable (unstable) if $L_{i}(t)$ is null recurrent or transient (transient, respectively). 

In this subsection, we so far do not assume the stability condition \eq{stability 1}, and consider conditions for a station to be unstable (or stable). For this, we first need to compute an arrival rate at each station, which is obtained as the maximal solution $\{\alpha_{i}; i \in \sr{F}\}$ of the following traffic equation (e.g., see \cite{ChenMand1991,ChenMand1991b}).
\begin{align}
\label{eq:traffic 2}
  \alpha_{i} = \lambda_{i} + \sum_{j\in \sr{J}} (\alpha_{j} \wedge \mu_{j}) p_{ji}, \qquad i \in \sr{J},
\end{align}
where we recall that $\mu_{i} = 1/\dd{E}(T_{s,i})$. Let $\rho_{i} = \alpha_{i}/\mu_{i}$, which may be different from $\rho^{(0)}$ (see at the end of \sectn{notation}). Under appropriate conditions such as $T_{s,i}$ has a spread out distribution (see \cite{Asmu2003} for its definition), station $i$ is weakly stable (stable) if and only if $\rho_{i} \le 1$ ($\rho_{i} < 1$), and weakly unstable (unstable) if and only if $\rho_{i} \ge 1$ ($\rho_{i} > 1$, respectively). 

It is easy to see that $\alpha_{i} \le \alpha^{(0)}_{i}$ for all $i \in \sr{J}$, where recall that $\alpha^{(0)}_{i}$ is the solution of the standard traffic equation \eq{traffic 1}. The $\alpha_{i}$ can be numerically obtained from \eq{traffic 2} in finite steps, but it is hard to get its analytic expression. For us, it is particularly important to give sufficient conditions in terms of $\gamma_{e,i}, \gamma_{s,i}$ for a station to be unstable or weakly unstable because these functions are well handled under change of measure. We first give sufficient conditions for instability in terms of $\lambda_{i}, \mu_{i}, p_{ij}$ and $\alpha^{(0)}_{i}$.

\begin{lemma} \rm
\label{lem:unstable 1}
(a) For each $j \in \sr{J}$, if either $\alpha^{(0)}_{j} \le \mu_{j}$ or
\begin{align}
\label{eq:mu 1}
  \lambda_{j} + \sum_{k\in \sr{J}} \mu_{k} p_{kj} \le \mu_{j},
\end{align}
holds, then $\rho_{j} \le 1$. That is, station $j$ is weakly stable. \\
(b) If $\mu_{j} < \alpha^{(0)}_{j}$ and if $\rho_{i} \le 1$ for all $i \in \sr{J} \setminus \{j\}$, then $\rho_{j} > 1$, that is, station $j$ is unstable.\\
(c) If, for all $ j \in \sr{J}$,
\begin{align}
\label{eq:mu 2}
  \lambda_{j} + \sum_{k\in \sr{J}} \mu_{k} p_{kj} \ge \mu_{j}, 
\end{align}
then $\rho_{j} \ge 1$ for all $j \in \sr{J}$. That is, all stations is weakly unstable. If \eq{mu 2} holds with strict inequality for $j=i$, then $\rho_{i} > 1$, that is, station $i$ is unstable.
\end{lemma}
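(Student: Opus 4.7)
The three parts will be handled by direct manipulation of the traffic equation \eq{traffic 2}, using the trivial bound $\alpha_{j}\wedge\mu_{j}\le\alpha_{j}$ which, by induction on the iteration defining the maximal solution, yields $\alpha_{i}\le\alpha^{(0)}_{i}$ for every $i\in\sr{J}$. I will freely use that $\rho_{j}\le 1$ is equivalent to $\alpha_{j}\wedge\mu_{j}=\alpha_{j}$, and $\rho_{j}\ge 1$ to $\alpha_{j}\wedge\mu_{j}=\mu_{j}$.

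For part (a) the two hypotheses are handled separately. If $\alpha^{(0)}_{j}\le\mu_{j}$ then $\alpha_{j}\le\alpha^{(0)}_{j}\le\mu_{j}$, so $\rho_{j}\le 1$. If instead $\lambda_{j}+\sum_{k}\mu_{k}p_{kj}\le\mu_{j}$, then from \eq{traffic 2} and $\alpha_{k}\wedge\mu_{k}\le\mu_{k}$ we get
\begin{align*}
\alpha_{j}=\lambda_{j}+\sum_{k}(\alpha_{k}\wedge\mu_{k})p_{kj}\le\lambda_{j}+\sum_{k}\mu_{k}p_{kj}\le\mu_{j},
\end{align*}
which again gives $\rho_{j}\le 1$.

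For part (b) I will argue by contradiction. If $\rho_{j}\le 1$ as well as $\rho_{i}\le 1$ for all $i\ne j$, then $\alpha_{i}\wedge\mu_{i}=\alpha_{i}$ for every $i\in\sr{J}$, and \eq{traffic 2} collapses to the linear traffic equation \eq{traffic 1}. By the uniqueness of the solution to \eq{traffic 1}, we obtain $\alpha_{i}=\alpha^{(0)}_{i}$ for all $i$, in particular $\alpha_{j}=\alpha^{(0)}_{j}>\mu_{j}$, contradicting $\rho_{j}\le 1$.

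Part (c) is the main one. Again I will argue by contradiction: suppose $A\equiv\{j\in\sr{J}:\alpha_{j}<\mu_{j}\}$ is nonempty, and set $\delta_{j}\equiv\mu_{j}-\alpha_{j}>0$ for $j\in A$. Splitting the sum in \eq{traffic 2} according to $k\in A$ (where $\alpha_{k}\wedge\mu_{k}=\alpha_{k}=\mu_{k}-\delta_{k}$) and $k\notin A$ (where $\alpha_{k}\wedge\mu_{k}=\mu_{k}$), and subtracting from the hypothesis $\mu_{j}\le\lambda_{j}+\sum_{k}\mu_{k}p_{kj}$, I will obtain for every $j\in A$
\begin{align*}
\delta_{j}-\sum_{k\in A}\delta_{k}p_{kj}=\Bigl(\mu_{j}-\lambda_{j}-\sum_{k}\mu_{k}p_{kj}\Bigr)+\sum_{k\in A}\delta_{k}p_{kj}-\sum_{k\in A}\delta_{k}p_{kj}\cdot 0 \le 0,
\end{align*}
so in matrix form $(I-P_{A}^{\top})\vc{\delta}\le\vc{0}$, where $P_{A}=(p_{kj})_{k,j\in A}$ and $\vc{\delta}=(\delta_{j})_{j\in A}>\vc{0}$. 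Since $P$ is strictly substochastic with $\ol{P}$ irreducible, the principal submatrix $P_{A}$ has spectral radius strictly less than one, so $(I-P_{A}^{\top})^{-1}$ exists with nonnegative entries. Multiplying the inequality on the left gives $\vc{\delta}\le\vc{0}$, contradicting $\vc{\delta}>\vc{0}$. Hence $A=\emptyset$ and $\rho_{j}\ge 1$ for all $j$. For the strict case, once $\alpha_{j}\ge\mu_{j}$ for every $j$, \eq{traffic 2} gives $\alpha_{i}=\lambda_{i}+\sum_{k}\mu_{k}p_{ki}>\mu_{i}$ whenever the hypothesis is strict at $j=i$, yielding $\rho_{i}>1$.

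The main obstacle is the matrix argument in (c): verifying cleanly that $(I-P_{A}^{\top})^{-1}\ge 0$ from the strict substochasticity and irreducibility of $\ol{P}$ is the only non-routine step; all the other parts are direct manipulations of \eq{traffic 2}.
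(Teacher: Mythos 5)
Your proof is correct, and parts (a) and (b) are exactly the paper's arguments (the bound $\alpha_{j}\le\alpha^{(0)}_{j}$ plus the crude bound $\alpha_{k}\wedge\mu_{k}\le\mu_{k}$ in \eq{traffic 2} for (a); the collapse of \eq{traffic 2} to \eq{traffic 1} and uniqueness for (b)). Part (c) starts the same way as the paper — introduce $A=\{j\in\sr{J};\alpha_{j}<\mu_{j}\}$ and extract from \eq{traffic 2} and \eq{mu 2} the componentwise inequality $\delta_{j}\le\sum_{k\in A}p_{kj}\delta_{k}$ for $j\in A$ — but you finish differently: you write it as $(I-P_{A}^{\top})\vc{\delta}\le\vc{0}$ and invert, using $\rho(P_{A})\le\rho(P)<1$ and the nonnegativity of the Neumann series, to get $\vc{\delta}\le\vc{0}$, contradicting $\vc{\delta}>\vc{0}$. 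The paper instead sums the same inequality over $j\in A$, obtains $\sum_{i\in A}(\alpha_{i}-\mu_{i})\big(1-\sum_{j\in A}p_{ij}\big)\ge 0$, and concludes $\sum_{j\in A}p_{ij}=1$ for all $i\in A$, which contradicts the irreducibility of $\ol{P}$ directly. The two finishes rest on the same standing assumptions; the paper's needs no spectral input at all (only that $A$ cannot be absorbing under $\ol{P}$), while yours buys a one-line conclusion at the cost of the principal-submatrix fact, which you should anchor explicitly: strict substochasticity together with irreducibility of $\ol{P}$ give $\rho(P)<1$ (this is what makes \eq{traffic 1} uniquely solvable), and a principal submatrix of a nonnegative matrix has no larger spectral radius — the paper invokes the same kind of fact in \rem{geometric 2} for $P^{(\sr{J}\setminus A)}$, so the step is certainly admissible. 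One cosmetic repair: the displayed identity in your part (c) is garbled (the spurious terms $+\sum_{k\in A}\delta_{k}p_{kj}-\sum_{k\in A}\delta_{k}p_{kj}\cdot 0$ make it false as an equation); the computation your surrounding text actually describes, and which is what you need, is $\delta_{j}-\sum_{k\in A}\delta_{k}p_{kj}=\mu_{j}-\lambda_{j}-\sum_{k\in\sr{J}}\mu_{k}p_{kj}\le 0$. The strict-inequality case at the end of (c) matches the paper.
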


\begin{remark}
\label{rem:unstable 1}
For our application, it would be nice if (b) can be generalized in such a way that, for $A \subset \sr{J}$, if $\mu_{j} < \alpha^{(0)}_{j}$ for all $j \in A$ and if $\rho_{i} \le 1$ for all $i \in \sr{J} \setminus A$, then $\rho_{j} > 1$ for all $j \in A$. Unfortunately, this is generally not true. A counterexample is easily obtained, for example, for a three station tandem queue (see Section 4 of \cite{ChenMand1991} for some related discussions). We need to update $\alpha^{(0)}_{j}$ using the information on the unstable station to be available to get such a generalization, but it would be less analytically tractable. Thus, we will not pursue it in this paper.
\end{remark}

\begin{proof}
(a) Since $\alpha_{j} \le \alpha^{(0)}_{j}$, it follows from $\alpha^{(0)}_{j} \le \mu_{j}$ that $\rho_{j} \le 1$. If \eq{mu 1} holds, \eq{traffic 2} implies that $\alpha_{j} \le \mu_{j}$, which is equivalent to $\rho_{j} \le 1$. \\
(b) Suppose that $\rho_{j} \le 1$ contrary to the claim, then $\rho_{k} \le 1$ for all $k \in \sr{J}$ by the second assumption. Hence, $\alpha_{k} \le \mu_{k}$ for all $k \in \sr{J}$, and therefore the non-linear traffic equation \eq{traffic 2} is identical with the linear traffic equation \eq{traffic 1}. Thus, $\alpha^{(0)}_{k} = \alpha_{k} \le \mu_{k}$ for all $k \in \sr{J}$. This contradicts the assumption that $\mu_{i} < \alpha^{(0)}_{i}$, and therefore (b) is proved.\\
(c) Let $A = \{i \in \sr{J}; \alpha_{i} < \mu_{i}\}$, then \eq{traffic 2} can be written as
\begin{align*}
  \alpha_{j} = \lambda_{j} + \sum_{i \in A} \alpha_{i} p_{ij} + \sum_{i \in \sr{J} \setminus A} \mu_{i} p_{ij}.
\end{align*}
Hence, \eq{mu 2} implies that
\begin{align*}
  \alpha_{j} - \mu_{j} = \lambda_{j} - \mu_{j} + \sum_{i \in A} (\alpha_{i} - \mu_{i}) p_{ij} + \sum_{i \in \sr{J}} \mu_{i} p_{ij}  \ge \sum_{i \in A} (\alpha_{i} - \mu_{i}) p_{ij}.
\end{align*}
We then sum up both sides of this inequality for all $j \in A$, which yields
\begin{align*}
  \sum_{i \in A} (\alpha_{i} - \mu_{i}) \Big(1 - \sum_{j \in A} p_{ij}\Big) \ge 0.
\end{align*}
Since $\alpha_{i} - \mu_{i} < 0$ for $i \in A$, we must have
\begin{align*}
  \sum_{j \in A} p_{ij} = 1, \qquad i \in A,
\end{align*}
which contradicts the irreducibility of the over all routing matrix $\ol{P}$, and therefore $A = \emptyset$. This proves the first half of (c). It also implies that $\alpha_{j} = \mu_{j}$ for all $j \in \sr{J}$. Hence, if \eq{mu 2} holds with strict inequality, then \eq{traffic 2} implies that
\begin{align*}
  \alpha_{j} = \lambda_{j} + \sum_{k\in \sr{J}} \mu_{k} p_{kj} > \mu_{j}.
\end{align*}
This proves the remaining part of (c).
\end{proof}

We next characterize the conditions in \lem{unstable 1} by the gradient vector of ${\gamma}(\vc{\theta})$ and $\gamma_{s,i}(\vc{\theta})$ at $\vc{\theta} = \vc{0}$. Define the gradient operator $\nabla$ as
\begin{align}
\label{eq:gradient 1}
  \nabla {\gamma}(\vc{\theta}) = \left(\frac {\partial} {\partial \theta_{1}} {\gamma}(\vc{\theta}), \frac {\partial} {\partial \theta_{2}} {\gamma}(\vc{\theta}), \ldots, \frac {\partial} {\partial \theta_{d}} {\gamma}(\vc{\theta})\right).
\end{align}
Since
\begin{align}
\label{eq:gradient es 1}
  \left. \frac {\partial} {\partial \theta_{i}} \gamma_{e,i}(\theta_{i}) \right|_{\theta_{i} = 0} = \lambda_{i}, \qquad \left. \frac {\partial} {\partial \theta_{i}} \gamma_{s,i}(\vc{\theta}) \right|_{\vc{\theta}=\vc{0}} = - \mu_{i}, \qquad \left. \frac {\partial} {\partial \theta_{j}} \gamma_{s,i}(\vc{\theta}) \right|_{\vc{\theta}=\vc{0}} = \mu_{i} p_{ij},
\end{align}
and $\vc{\alpha}^{(0)} = \vc{\lambda} (I - P)^{-1}$, we have
\begin{align}
\label{eq:gradient 2}
  \nabla {\gamma}(\vc{0}) = \vc{\lambda} - \vc{\mu} (I - P), \qquad \nabla {\gamma}(\vc{0}) (I - P)^{-1} = \vc{\alpha}^{(0)} - \vc{\mu} .
\end{align}
Using these facts, we have geometric interpretations for the conditions in \lem{unstable 1} by the curves of ${\gamma}(\vc{\theta}) = 0$ and $\gamma_{s,i}(\vc{\theta}) = 0$ for $i \in \sr{J}$. For this, we introduced vectors $\vc{t}_{i} \in \dd{R}^{d}$ for $i \in \sr{J}$ such that 
\begin{align}
\label{eq:t vector 1}
  \br{\nabla \gamma_{s,j}(\vc{0}), \vc{t}_{i}} = 0, \quad j \ne i, \qquad \br{\nabla \gamma_{s,i}(\vc{0}), \vc{t}_{i}} > 0.
\end{align}
Note that this $\vc{t}_{i}$ is uniquely determined except for its length $\|\vc{t}_{i}\|$.

\begin{lemma}
\label{lem:t 1}
Let $T = (\vc{t}_{1}, \vc{t}_{2}, \ldots, \vc{t}_{d})$, then, for some positive vector $\vc{a}$,
\begin{align}
\label{eq:t 1}
  T = - (I - P)^{-1} \Delta_{\vc{a}},
\end{align}
and therefore $T$ is non-singular and $\vc{t}_{i} \le \vc{0}$ with $t_{ii} < 0$ for all $i \in \sr{J}$.
\end{lemma}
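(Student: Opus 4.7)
The plan is to translate the defining conditions \eq{t vector 1} into a linear system whose solution has the claimed form, and then read off the sign structure from the Neumann expansion of $(I-P)^{-1}$.

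First I would unpack the gradients $\nabla \gamma_{s,i}(\vc{0})$ into a single clean row identity. Starting from $\gamma_{s,i}(\vc{\theta}) = -\xi_{F_{s,i}}(\log q_{i}(\vc{\theta}))$, since $q_{i}(\vc{0}) = 1$ and $\log q_{i}(\vc{0}) = 0$, \lem{xi 1}(a),(d) give $\xi_{F_{s,i}}(0) = 0$ and $\xi'_{F_{s,i}}(0) = -\mu_{i}$. A one-line differentiation of $\log q_{i}$ then yields $\partial_{\theta_{k}} \gamma_{s,i}(\vc{\theta})|_{\vc{0}} = \mu_{i}(p_{ik} - \delta_{ik})$, which is exactly \eq{gradient es 1} and, as a row vector, packages as
\begin{align*}
  \nabla \gamma_{s,i}(\vc{0}) = \mu_{i}\, \vcn{e}_{i}^{\,T}(P - I).
\end{align*}

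Next I would substitute this identity into \eq{t vector 1}. The orthogonality conditions become $\mu_{j}\,\vcn{e}_{j}^{\,T}(P - I)\vc{t}_{i} = 0$ for $j \ne i$ and $\mu_{i}\,\vcn{e}_{i}^{\,T}(P - I)\vc{t}_{i} > 0$, which, since every $\mu_{k} > 0$, simply says that the column vector $(P - I)\vc{t}_{i}$ has all coordinates equal to $0$ except possibly the $i$-th, where it is strictly positive. Equivalently, there exists $a_{i} > 0$ with $(P - I)\vc{t}_{i} = a_{i}\vcn{e}_{i}$. Since $P$ is strictly substochastic, $I - P$ is invertible, and inverting gives $\vc{t}_{i} = -(I - P)^{-1} a_{i}\vcn{e}_{i}$. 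Stacking these as columns yields $T = -(I - P)^{-1}\Delta_{\vc{a}}$ with $\vc{a} = (a_{1},\ldots,a_{d}) > \vc{0}$, which is \eq{t 1}.

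Finally, for the sign and non-singularity statements, I would invoke the Neumann series $(I - P)^{-1} = \sum_{n\ge 0} P^{n}$, which converges by strict substochasticity and is entrywise nonnegative, with diagonal entries at least $1$ (from the $n=0$ term). Hence $\vc{t}_{i}$ equals $-a_{i}$ times the $i$-th column of $(I-P)^{-1}$, which is nonnegative with $i$-th entry $\ge 1$; this yields $\vc{t}_{i} \le \vc{0}$ and $t_{ii} \le -a_{i} < 0$. Non-singularity of $T$ is immediate because both $(I - P)^{-1}$ and $\Delta_{\vc{a}}$ are non-singular.

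I do not anticipate a serious obstacle here: once \eq{gradient es 1} is repackaged into the matrix identity $\nabla \gamma_{s,i}(\vc{0}) = \mu_{i}\vcn{e}_{i}^{\,T}(P-I)$, the rest is linear algebra together with nonnegativity of $(I-P)^{-1}$. The only minor subtlety worth stating explicitly is that the scalars $a_{i}$ are free (only their positivity is forced), reflecting the fact that $\vc{t}_{i}$ is determined by \eq{t vector 1} only up to a positive scalar.
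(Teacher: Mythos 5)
Your proof is correct and follows essentially the same route as the paper, which simply notes that \eq{t 1} is immediate from \eq{gamma 4} and \eq{t vector 1}; you have merely spelled out the linear-algebra step $(P-I)\vc{t}_{i}=a_{i}\vcn{e}_{i}$ and the nonnegativity of $(I-P)^{-1}$ via the Neumann series, which the paper leaves implicit. Your closing remark on the free positive scalars $a_{i}$ matches the paper's observation that $\vc{t}_{i}$ is determined only up to its length.
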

\begin{proof}
Since \eq{gradient es 1} yields
\begin{align}
\label{eq:gamma 4}
  \nabla \gamma_{s,j}(\vc{0}) = \mu_{j}((p_{j1}, p_{j2}, \ldots, p_{jd}) - \vcn{e}_{j}), \qquad j \in \sr{J},
\end{align}
\eq{t 1} is immediate from \eq{t vector 1}.
\end{proof}

\begin{lemma}
\label{lem:geometric 1}
(a) For $j \in \sr{J}$, the condition \eq{mu 1} holds if and only if the $j$-th entry of the gradient vector $\nabla {\gamma}(\vc{0})$ is not positive. (b) For each $k \in \sr{J}$, $\mu_{k} < \alpha^{(0)}_{k}$ if and only if $\br{\nabla {\gamma}(\vc{0}), \vc{t}_{k}} < 0$.
\end{lemma}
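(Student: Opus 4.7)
The plan is to reduce both parts to direct algebraic identities using the explicit formulas \eq{gradient 2} for $\nabla\gamma(\vc{0})$ and \eq{t 1} for the matrix $T$, so essentially no analytical work is needed beyond matching expressions.

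For part (a), I would start from \eq{gradient 2}, which gives
\begin{align*}
  \nabla \gamma(\vc{0}) = \vc{\lambda} - \vc{\mu}(I - P).
\end{align*}
Reading off the $j$-th entry yields
\begin{align*}
  [\nabla \gamma(\vc{0})]_{j} = \lambda_{j} - \mu_{j} + \sum_{k \in \sr{J}} \mu_{k} p_{kj},
\end{align*}
so $[\nabla \gamma(\vc{0})]_{j} \le 0$ is literally the inequality \eq{mu 1}. This settles (a) with no further work.

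For part (b), I would use that, by \lem{t 1}, $\vc{t}_{k}$ is the $k$-th column of $T = -(I - P)^{-1}\Delta_{\vc{a}}$ for some strictly positive vector $\vc{a}$. Treating $\nabla \gamma(\vc{0})$ as a row vector, I compute
\begin{align*}
  \br{\nabla \gamma(\vc{0}), \vc{t}_{k}} = \nabla \gamma(\vc{0})\, \vc{t}_{k} = -a_{k}\, \bigl[\nabla \gamma(\vc{0}) (I - P)^{-1}\bigr]_{k} = -a_{k}(\alpha^{(0)}_{k} - \mu_{k}),
\end{align*}
where the last equality uses the second identity in \eq{gradient 2}. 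Since $a_{k} > 0$, the sign of $\br{\nabla \gamma(\vc{0}), \vc{t}_{k}}$ is opposite to that of $\alpha^{(0)}_{k} - \mu_{k}$, so $\br{\nabla \gamma(\vc{0}), \vc{t}_{k}} < 0$ if and only if $\mu_{k} < \alpha^{(0)}_{k}$, as claimed.

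There is no real obstacle here; the only thing to watch is consistency of row/column conventions and the correct matching of $\vc{t}_{k}$ with the $k$-th column of $T$ (versus the $k$-th row), together with the positivity of the entries $a_{k}$ so that the sign flip is unambiguous. Everything else follows by substitution into the formulas already established in \eq{gradient es 1}, \eq{gradient 2}, and \lem{t 1}.
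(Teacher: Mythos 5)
Your proof is correct and follows essentially the same route as the paper: part (a) is the entrywise reading of the first identity in \eq{gradient 2}, and part (b) is the same algebraic manipulation the paper performs with $\vc{\alpha}^{(0)} - \vc{\mu} = \nabla{\gamma}(\vc{0})(I-P)^{-1} = -\nabla{\gamma}(\vc{0})\,T\,\Delta_{\vc{a}}^{-1}$ from \lem{t 1}, with the positivity of $a_{k}$ giving the sign equivalence. Your version merely makes the componentwise step and the role of $a_{k}>0$ explicit, which matches the paper's argument.
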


\begin{remark}
\label{rem:geometric 1}
$\gamma_{s,i}(\vc{\theta}) = 0 \; (> 0)$ if and only if $q_{i}(\vc{\theta}) = 1\; (> 1, respectively)$ by \eq{gamma 1}, so they present the same geometric curve. However, the gradients $\nabla \gamma_{i}(\vc{\theta})$ and $\nabla q_{i}(\vc{\theta})$ may not be identical. In particular, $\nabla \gamma_{i}(\vc{0}) = \mu_{i} \nabla q_{i}(\vc{0})$.
\end{remark}

\begin{proof}
(a) is immediate from the first equation of \eq{gradient 2}. (b) It follows from \eq{t 1} that 
\begin{align*}
  \vc{\alpha}^{(0)} - \vc{\mu} = \nabla {\gamma}(\vc{0}) (I - P)^{-1} = -  \nabla {\gamma}(\vc{0}) (\vc{t}_{1}, \vc{t}_{2}, \ldots, \vc{t}_{d}) \Delta_{\vc{a}}^{-1},
\end{align*}
Thus, $\mu_{k} < \alpha^{(0)}_{k}$ if and only if $\br{\nabla {\gamma}(\vc{0}), \vc{t}_{k}} < 0$.
\end{proof}

\begin{figure}[h] 
   \centering
   \includegraphics[height=5.2cm]{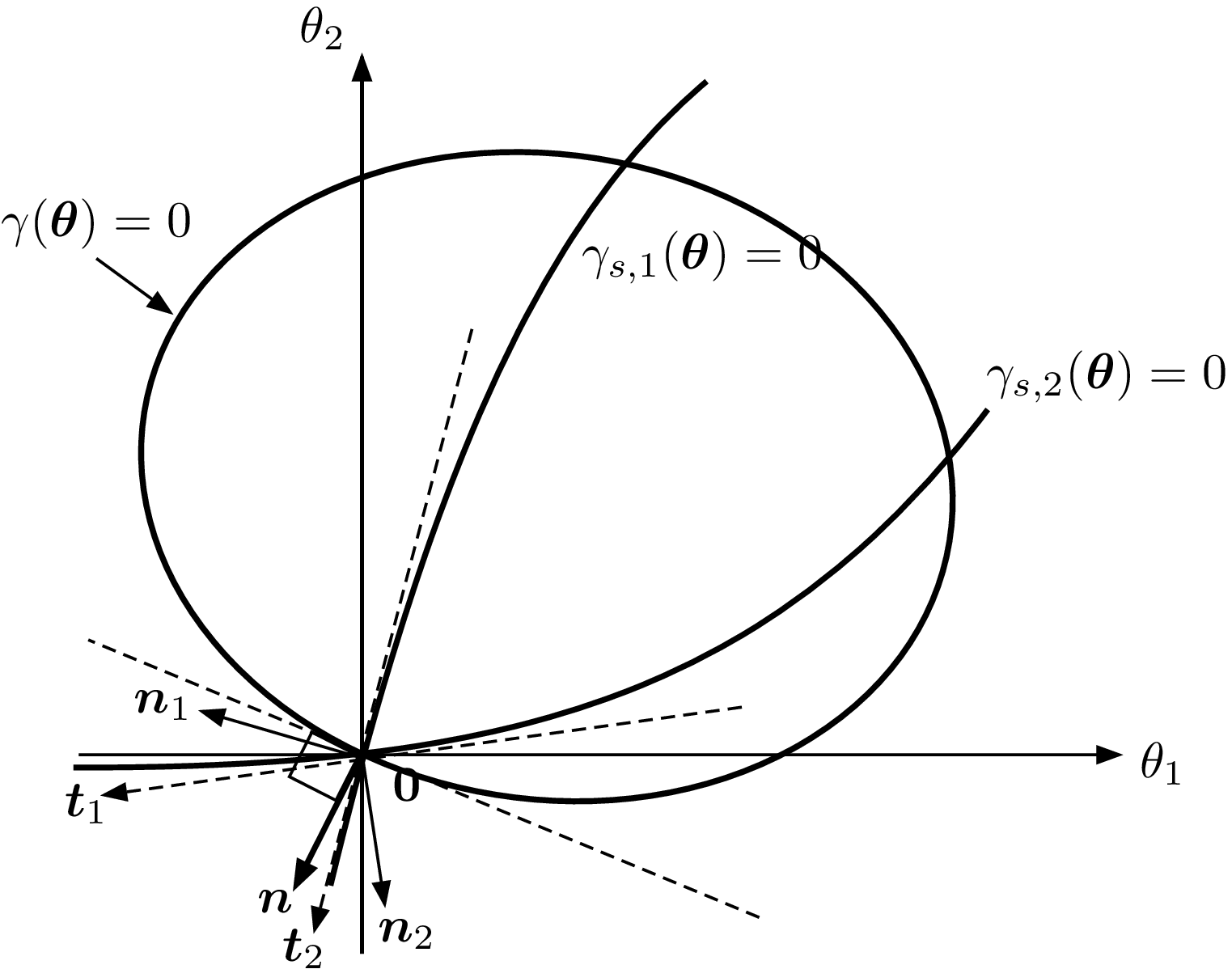} \hspace{2ex} \includegraphics[height=5.2cm]{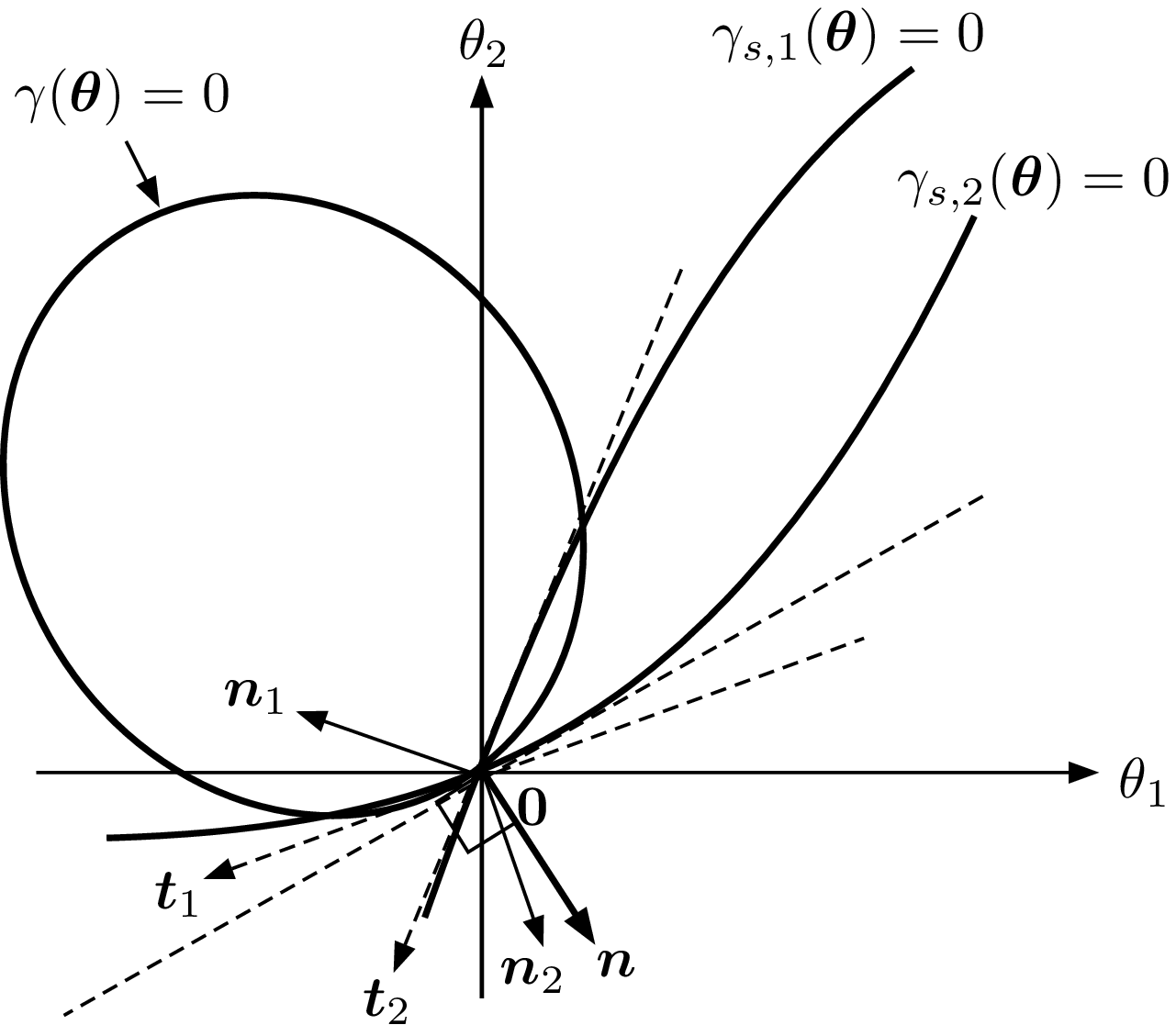}
   \caption{Geometric objects for $d=2$, where $\vc{n} = \nabla {\gamma}(\vc{0})$, $\vc{n}_{i} = \nabla \gamma_{i}(\vc{0})$ for $i=1,2$.}
   \label{fig:2d-GJN-C}
\end{figure}

\fig{2d-GJN-C} illustrates the two cases for $d=2$. The left panel shows that $\vc{n} < \vc{0}$ and  $\br{\vc{n}, \vc{t}_{i}} > 0$ for $i=1,2$, and both stations are stable, which is directly from $\vc{n} < \vc{0}$. The right panel shows that $n_{1} > 0, n_{2} < 0$, $\br{\vc{n}, \vc{t}_{1}} < 0$ and $\br{\vc{n}, \vc{t}_{2}} > 0$, and station $1$ is unstable while station $2$ is stable by Lemmas \lemt{unstable 1} and \lemt{geometric 1}.

\lem{geometric 1} together with Lemmas \lemt{unstable 1} and \lemt{t 1} provides us geometric interpretations of the stability and instability conditions for stations through curves $\gamma(\vc{\theta})=0$ and $\gamma_{s,i}(\vc{\theta})=0$ for $i \in \sr{J}$. We will use them for the GJN before and after change of measure.

\subsection{Tail asymptotics}
\label{sect:tail}

We now return to the assumption that the GJN is stable (see, e.g., the left panel of \fig{2d-GJN-C}). Under this assumption, we will use the following sets for considering the tail asymptotics of the stationary distribution. Let
\begin{align*}
 & \Gamma^{\rs{in}} = \{\vc{\theta} \in \dd{R}^{d}; {\gamma}(\vc{\theta}) < 0\}, \qquad \overleftarrow{\Gamma}^{\rs{in}} = \{\vc{\theta} \in \dd{R}^{d}; \vc{\theta} < \vc{\theta}', \exists \vc{\theta}' \in \Gamma^{\rs{in}} \}, \\
 & \Gamma^{\rs{out}} = \{\vc{\theta} \in \dd{R}^{d}; \gamma(\vc{\theta}) > 0\},\qquad \overrightarrow{\Gamma}^{\rs{out}} = \{\vc{\theta} \in \dd{R}^{d}; \{\vc{\theta}' \in \dd{R}^{d}; \vc{\theta} < \vc{\theta}'\} \cap \Gamma^{\rs{in}} = \emptyset \},
\end{align*}
For $A \subset \sr{J}$, let $\Gamma_{A} = \{\vc{\theta} \in \dd{R}^{d}; \gamma_{s,i}(\vc{\theta}) > 0, \forall i \in \sr{J} \setminus A\}$, and let
\begin{align*}
 & \Gamma^{\rs{in}}_{A} = \Gamma^{\rs{in}} \cap \Gamma_{A}, \qquad \overleftarrow{\Gamma}^{\rs{in}}_{A} = \{\vc{\theta} \in \overleftarrow{\Gamma}^{\rs{in}}; \vc{\theta} < \vc{\theta}', \exists \vc{\theta}' \in \Gamma^{\rs{in}}_{A} \}.
\end{align*}
In particular, for $A = \{k\}$ with $k \in \sr{J}$ and $u = \rs{in}$ or $\rs{out}$, $\Gamma^{u}_{A}$ is simply denoted by $\Gamma^{u}_{k}$. Those sets are open and connected sets. We denote their boundaries by putting operator $\partial$ like $\partial \Gamma^{\rs{in}}$, which is $\{\vc{\theta} \in \dd{R}^{d}; {\gamma}(\vc{\theta}) = 0\}$. Obviously, $\partial \Gamma^{\rs{in}} = \partial \Gamma^{\rs{out}}$, and $\partial \overleftarrow{\Gamma}^{\rs{in}} = \partial \overrightarrow{\Gamma}^{\rs{out}}$.

Note that $\Gamma^{\rs{in}}$ is a non-empty bounded and convex set because $\gamma(\vc{\theta})$ is convex and diverges as $\|\vc{\theta}\|$ goes to infinity in any direction, and therefore $\Gamma^{\rs{out}}$ is also not empty. We check below that $\Gamma^{\rs{in}}_{A}$ is not empty for $A \ne \emptyset$.

\begin{lemma}
\label{lem:geometric 2}
Assume that the GJN is stable, and let $A \subset \sr{J}$. (a) If $A \ne \emptyset$, then $\Gamma^{\rs{in}}_{A}$ is not empty, and contains some $\vc{\theta} \ge \vc{0}$ with $\theta_{i} > 0$ for all $i \in A$. (b) Define
\begin{align*}
 \Gamma^{\rs{cx}}_{A} = \Big\{\vc{\theta} \in \Gamma^{\rs{in}}; \sum_{i \in \sr{E}} \gamma_{e,i}(\vc{\theta}) + \sum_{j \in A} \gamma_{s,j}(\vc{\theta}) < 0\Big\},
\end{align*}
then $\Gamma^{\rs{cx}}_{A}$ is convex, $\Gamma^{\rs{in}}_{A} \subset \Gamma^{\rs{cx}}_{A}$, and $\partial \Gamma^{\rs{in}} \cap \Gamma_{A} \subset \partial \Gamma^{\rs{cx}}_{A}$.
\end{lemma}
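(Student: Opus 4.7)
My plan is to dispatch (b) first, since it follows essentially from convexity, then construct an explicit candidate for (a) using the vectors $\vc{t}_i$ from \lem{t 1}.

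For (b), convexity is immediate: the map $\vc{\theta} \mapsto \sum_{i \in \sr{E}} \gamma_{e,i}(\theta_i) + \sum_{j \in A} \gamma_{s,j}(\vc{\theta})$ is a sum of convex functions, so its strict sublevel set at $0$ is convex, and $\Gamma^{\rs{cx}}_A$ is its intersection with the convex $\Gamma^{\rs{in}}$. The key algebraic identity
\begin{align*}
  \sum_{i \in \sr{E}} \gamma_{e,i}(\theta_{i}) + \sum_{j \in A} \gamma_{s,j}(\vc{\theta}) = \gamma(\vc{\theta}) - \sum_{j \in \sr{J} \setminus A} \gamma_{s,j}(\vc{\theta})
\end{align*}
shows that any $\vc{\theta} \in \Gamma^{\rs{in}}_A$ makes the left-hand side strictly negative (since $\gamma(\vc{\theta}) < 0$ and $\gamma_{s,j}(\vc{\theta}) > 0$ for $j \notin A$), proving $\Gamma^{\rs{in}}_A \subset \Gamma^{\rs{cx}}_A$. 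For the boundary inclusion, take $\vc{\theta}^* \in \partial \Gamma^{\rs{in}} \cap \Gamma_A$; then $\gamma(\vc{\theta}^*) = 0$ and $\sum_{j \notin A} \gamma_{s,j}(\vc{\theta}^*) > 0$, so the identity's left-hand side is strictly negative at $\vc{\theta}^*$. Since $\vc{\theta}^* \in \partial \Gamma^{\rs{in}}$, any neighborhood contains points of $\Gamma^{\rs{in}}$, and continuity of the $\gamma$'s preserves both $\gamma < 0$ and the strict sum inequality on a small enough neighborhood. Those perturbations therefore lie in $\Gamma^{\rs{cx}}_A$, while $\vc{\theta}^* \notin \Gamma^{\rs{cx}}_A$ (as $\vc{\theta}^* \notin \Gamma^{\rs{in}}$), hence $\vc{\theta}^* \in \partial \Gamma^{\rs{cx}}_A$.

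For (a), I would take the candidate direction
\begin{align*}
  \vc{\theta}_0 = -\sum_{i \in A} \vc{t}_i.
\end{align*}
By \lem{t 1}, each $\vc{t}_i \le \vc{0}$ with $t_{ii} < 0$, so $\vc{\theta}_0 \ge \vc{0}$ and $(\vc{\theta}_0)_i \ge -t_{ii} > 0$ for every $i \in A$. The stability condition $\alpha^{(0)}_i < \mu_i$ together with \lem{geometric 1}(b) gives $\br{\nabla \gamma(\vc{0}), \vc{t}_i} > 0$ for each $i \in A$, so summing yields $\br{\nabla \gamma(\vc{0}), \vc{\theta}_0} < 0$. Combined with $\gamma(\vc{0}) = 0$ and smoothness of $\gamma$, a first-order Taylor expansion gives $\gamma(\epsilon \vc{\theta}_0) < 0$ for all sufficiently small $\epsilon > 0$, placing $\epsilon \vc{\theta}_0$ in $\Gamma^{\rs{in}}$.

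The remaining step, and the main obstacle, is to verify $\gamma_{s,j}(\epsilon \vc{\theta}_0) > 0$ for every $j \in \sr{J} \setminus A$. By the defining property \eq{t vector 1}, $\br{\nabla \gamma_{s,j}(\vc{0}), \vc{t}_i} = 0$ for $i \ne j$, hence $\br{\nabla \gamma_{s,j}(\vc{0}), \vc{\theta}_0} = 0$ for $j \notin A$; convexity of $\gamma_{s,j}$ together with $\gamma_{s,j}(\vc{0}) = 0$ only yields the weak inequality $\gamma_{s,j}(\epsilon \vc{\theta}_0) \ge 0$. To upgrade to strict positivity I would compute the second-order expansion of $\log q_j(\epsilon \vc{\theta}_0)$, whose leading quadratic term
\begin{align*}
  \sum_{k} p_{jk} (\vc{\theta}_0)_k^{2} - \Big(\sum_{k} p_{jk} (\vc{\theta}_0)_k\Big)^{2}
\end{align*}
is non-negative by Cauchy--Schwarz and strict as soon as either $p_{j0} > 0$ with some $p_{jk}(\vc{\theta}_0)_k > 0$, or $p_{j\cdot}$ puts positive mass on two $k$'s with distinct $(\vc{\theta}_0)_k$; irreducibility of $\ol{P}$ together with $A \ne \emptyset$ should furnish this. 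If degeneracy persists for some $j$, I would perturb $\vc{\theta}_0$ by a small $\delta \vc{w} \ge \vc{0}$ designed to push the offending $\gamma_{s,j}$ upward while keeping $\gamma < 0$, with $\vc{w}$ obtained from a harmonic-function-type construction on $\sr{J} \setminus A$ relative to $A$ treated as absorbing. The essential difficulty here is that the first-order data is tangent to the hypersurface $\gamma_{s,j}=0$ at the origin, so the argument must exploit the global routing structure of $\ol{P}$ rather than any local gradient information.
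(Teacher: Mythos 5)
Your part (b) is correct and essentially coincides with the paper's argument: convexity of $\Gamma^{\rs{cx}}_{A}$ as an intersection of convex sets, the identity $\sum_{i\in\sr{E}}\gamma_{e,i}(\theta_{i})+\sum_{j\in A}\gamma_{s,j}(\vc{\theta})=\gamma(\vc{\theta})-\sum_{j\in\sr{J}\setminus A}\gamma_{s,j}(\vc{\theta})$ for the inclusion $\Gamma^{\rs{in}}_{A}\subset\Gamma^{\rs{cx}}_{A}$, and a continuity argument for the boundary inclusion, which you actually spell out in more detail than the paper does.

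Part (a) is where the genuine gap lies. Your construction starts like the paper's (the vectors $-\vc{t}_{i}$, nonnegativity from \lem{t 1}, and $\br{\nabla\gamma(\vc{0}),\vc{t}_{i}}>0$ from stability via \lem{geometric 1}), but the step you yourself call the main obstacle, namely $\gamma_{s,j}(\epsilon\vc{\theta}_{0})>0$ for $j\in\sr{J}\setminus A$, is never established, and neither of your proposed remedies closes it. The quadratic term of $\log q_{j}$ that you compute vanishes exactly when $(\vc{\theta}_{0})_{k}$ equals $(\vc{\theta}_{0})_{j}$ for every $k$ in the support of $p_{j\cdot}$ (and equals $0$ when $p_{j0}>0$). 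Since $-\vc{t}_{i}=a_{i}(I-P)^{-1}\vcn{e}_{i}$ by \lem{t 1}, the entry $(\vc{\theta}_{0})_{k}$ is positive precisely when station $k$ can reach $A$ through the routing matrix $P$; hence the degenerate case is exactly a station $j\notin A$ with no routing path inside the network to $A$, and for such a $j$ one gets $(\vc{\theta}_{0})_{j}=0$ and $(\vc{\theta}_{0})_{k}=0$ on the whole support of $p_{j\cdot}$, so that $q_{j}(\epsilon\vc{\theta}_{0})=1$ identically and $\gamma_{s,j}(\epsilon\vc{\theta}_{0})=0$ for every $\epsilon>0$: no expansion to any order can rescue this. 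Your appeal to irreducibility of $\ol{P}$ does not exclude the configuration, because $\ol{P}$-connectivity may pass through the outside node $0$; two stations in parallel with $p_{12}=p_{21}=0$ and $A=\{1\}$ is a legitimate stable GJN in the paper's setting. In that example the nonnegative perturbation $\delta\vc{w}$ cannot help either: whenever $\theta_{j}\ge0$ and the support of $p_{j\cdot}$ carries $\theta$-values no larger than $\theta_{j}$, one has $q_{j}(\vc{\theta})\le1$, i.e.\ $\gamma_{s,j}(\vc{\theta})\le0$, so in fact no $\vc{\theta}\ge\vc{0}$ at all lies in $\Gamma_{\{1\}}$ there; any complete argument must invoke some within-network reachability of $A$, which your proposal does not supply. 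For comparison, the paper disposes of precisely this point by introducing the half spaces $H^{\rs{in}}_{i}=\{\vc{x}\in\Gamma^{\rs{in}};\br{\nabla\gamma_{s,i}(\vc{0}),\vc{x}}\ge0\}$ and asserting that $\vc{x}\in H^{\rs{in}}_{i}$ with $\vc{x}\ne\vc{0}$ forces $\gamma_{s,i}(\vc{x})>0$; that assertion is exactly the statement you left open, so your diagnosis of where the difficulty sits is accurate, but as submitted your argument does not prove (a).
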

\begin{proof}
  (a) We note two facts. Firstly, $\br{\nabla {\gamma}(\vc{0}), \vc{t}_{i}} > 0$ for all $i \in \sr{J}$ by \lem{geometric 1} and the stability condition \eq{stability 1}. Secondly, $\vc{t}_{i} \le 0$ with $t_{ii} < 0$ by \lem {t 1}. These facts imply that $b_{i}(-\vc{t}_{i}) \in \Gamma$ for some $b_{i} > 0$. Let $H^{\rs{in}}_{i} = \{\vc{x} \in \Gamma; \br{\nabla \gamma_{s,i}(\vc{0}),\vc{x}} \ge 0\}$. Since $H^{\rs{in}}_{i}$ is a convex set, $H^{\rs{in}}_{\sr{J} \setminus A} \equiv \cap_{i \in \sr{J} \setminus A} H^{\rs{in}}_{i}$ is also convex, and obviously contains $b_{j}(-\vc{t}_{j})$ for $j \in A$. Hence, their convex combination is also in $H^{\rs{in}}_{\sr{J} \setminus A}$, and nonnegative with positive entries for $j \in A$ because $b_{j}(-\vc{t}_{j}) \ge 0$ and $b_{j}(-t_{jj}) > 0$ for all $j \in A$. Furthermore, $H^{\rs{in}}_{\sr{J} \setminus A} \subset \Gamma^{\rs{in}}_{A}$ because $\vc{x} \in H^{\rs{in}}_{i}$ implies that $\gamma_{s,i}(\vc{x}) > 0$ for $\vc{x} \ne \vc{0}$. Thus, (a) is proved. \\
(b) Since $\gamma_{e,i}$ and $\gamma_{s,j}$ are convex functions, $\Gamma^{\rs{cx}}_{A}$ is a convex set. Since $\gamma_{s,i}(\vc{\theta}) > 0$ for all $i \in \sr{J} \setminus A$ for $\vc{\theta} \in \Gamma^{\rs{in}}_{A}$, we have, for $\vc{\theta} \in \Gamma^{\rs{in}}$,
\begin{align*}
  \sum_{i \in \sr{E}} \gamma_{e,i}(\vc{\theta}) + \sum_{j \in A} \gamma_{s,j}(\vc{\theta}) = \gamma(\vc{\theta}) - \sum_{i \in \sr{J} \setminus A} \gamma_{s,i}(\vc{\theta}) < 0,
\end{align*}
which proves that $\Gamma^{\rs{in}}_{A} \subset \Gamma^{\rs{cx}}_{A}$. If $\vc{\theta} \in \partial \Gamma^{\rs{in}} \cap \Gamma_{A}$, then $\gamma(\vc{\theta}) = 0$ and $\gamma_{s,i}(\vc{\theta}) > 0$ for all $i \in \sr{J} \setminus A$, and therefore $\vc{\theta} \in \partial \Gamma^{\rs{cx}}_{A}$.
\end{proof}

\begin{remark}
\label{rem:geometric 2}
Since $\gamma_{s,i}(\vc{\theta}) = 0$ is equivalent to $q_{i}(\vc{\theta}) = 1$, $\vc{\theta} \in \partial \Gamma_{A}$ if and only if
\begin{align*}
  \sum_{j \in \sr{J} \setminus A} (\delta_{ij} - p_{ij}) e^{\theta_{j}} = \sum_{j \in A} p_{ij} e^{\theta_{j}} + p_{i0}, \qquad i \in \sr{J} \setminus A,
\end{align*}
where $\delta_{ij} = 1(i=j)$. Since $d - |A|$ dimensional matrix $P^{(\sr{J} \setminus A)} \equiv \{p_{ij}; i, j \in \sr{J} \setminus A\}$ is strictly substochastic, $I - P^{(\sr{J} \setminus A)}$ is invertible, and its inverse is nonnegative. Hence, if $\theta_{i} > 0$ for $i \in A$ and $\vc{\theta} \in \partial \Gamma_{A}$, then $\theta_{j} \ge 0$ for $j \in \sr{J} \setminus A$ since $p_{ij}$ may vanish for $i \in A$.
\end{remark}

We now present main results, which are proved in \sectn{theorem}. For this, we use the following notations. For $\vc{x} \in \dd{R}^{d}$ and $A \subset \sr{J}$, let $\vc{x}_{A}$ be the $|A|$ dimensional vector which is obtained from $\vc{x}$, dropping its $i$-entry of $\vc{x}$ for all $i \in \sr{J} \setminus A$. Let
\begin{align*}
  & \varphi_{k}(\vc{\theta}) = \dd{E}(e^{\br{\vc{\theta},\vc{L}}} 1(L_{k} = 0)), \qquad k \in \sr{J}, \qquad \varphi(\vc{\theta}) = \dd{E}(e^{\br{\vc{\theta},\vc{L}}}),\\
 & r_{*}(\vcn{e}_{k}) = \sup\big\{ \theta_{k}; \vc{\theta} \in \Gamma^{\rs{in}}_{k} \cap M_{k}, \varphi_{k}(\vc{\theta}) < \infty\big\}, \qquad k \in \sr{J},
\end{align*}
where $M_{k} = \{\vc{\theta} \in \dd{R}^{d}; \theta_{i} \ge 0, \forall i \in \sr{J} \setminus \{k\}\} \cup \{\vc{\theta} \in \dd{R}^{d}; \theta_{i} < 0, \forall i \in \sr{J} \setminus \{k\}\}$.
Note that $M_{k} = \dd{R}^{2}$ for $d=2$, and therefore $r_{*}(\vcn{e}_{k}) = r_{\{k\}}(\vcn{e}_{k})$. For $A \subset \sr{J}$ and $\vc{c} \in \overrightarrow{U}_{d}$, that is, unit direction vector $\vc{c}$, let 
\begin{align*}
 & r_{A}(\vc{c}) = \sup\big\{ \br{\vc{\theta}, \vc{c}}; \vc{\theta} \in \Gamma^{\rs{in}}_{A}, \varphi_{i}(\vc{\theta}) < \infty, \forall i \in A \big\},\\
 & m_{A}(\vc{c}) = \sup\big\{ u; u \vc{c} \in \overleftarrow{\Gamma}^{\rs{in}}_{A}, \varphi_{i}(\vc{\theta}) < \infty, \forall i \in A \big\}.
\end{align*}
Note that $r_{A}(\vc{c}) \le m_{A}(\vc{c})$ because $\|\vc{c}\| = 1$.

\begin{theorem} 
\label{thr:decay 1}
Assume that the GJN is stable, and let $B_{0}$ be a compact subset of $\dd{R}_{+}^{d}$. (a) For $k \in \sr{J}$,
\begin{align}
\label{eq:upper 1a}
 & \limsup_{x \to \infty} \frac 1x \log \dd{P}(\vc{L} \in x \vcn{e}_{k} + B_{0}) \le - r_{*}(\vcn{e}_{k}).
\end{align}
(b) If the uniformly bounded assumption (A1) is satisfied, then, for $\vc{c} \in \overrightarrow{U}_{d}$,
\begin{align}
\label{eq:upper boundary 1}
 & \limsup_{x \to \infty} \frac 1x \log \dd{P}(\vc{L} \in x \vc{c} + B_{0}) \le - \max\{ r_{A}(\vc{c}); \vc{c}_{A} > \vc{0}_{A}, A \in 2^{\sr{J}} \setminus \emptyset\},\\
\label{eq:upper marginal 1}
 & \limsup_{x \to \infty} \frac 1x \log \dd{P}(\br{\vc{c},\vc{L}} >  x) \le - \max\{ m_{A}(\vc{c}); A \in 2^{\sr{J}} \setminus \emptyset\}.
\end{align}
\end{theorem}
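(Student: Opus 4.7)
The plan is to couple the semi-martingale relation in Lemma~\lemt{martingale 2} with stationarity, derive a balance identity for $f_{\vc{J}(v),\vc{\theta}}$, use it to establish that the moment generating function $\dd{E}[e^{\br{\vc{\theta},\vc{L}}}]$ is finite at appropriate $\vc{\theta}$, and close with Markov's inequality. Let $\pi$ denote the stationary distribution of $X(\cdot)$, which exists by \eq{stability 1}, and write $\dd{E}_{\pi}$ for the stationary expectation. Since $t \mapsto \dd{E}_{\pi}[f_{\vc{J}(v),\vc{\theta}}(X(t))]$ is constant, taking $\dd{E}_{\pi}$ of \eq{martingale 4} (after a localization in $t$ that secures integrability) and dividing by $t$ produces a balance identity which, when $\gamma(\vc{\theta})<0$, rearranges into
\begin{align*}
& |\gamma(\vc{\theta})|\, \dd{E}_{\pi}[f_{\vc{J}(v),\vc{\theta}}] = \sum_{i \in J_{e}(v)} \gamma_{e,i}(v,\theta_{i})\, \dd{E}_{\pi}[1(R_{e,i}>v) f_{\vc{J}(v),\vc{\theta}}] \\
& \quad + \sum_{i \in J_{s}(v)} \gamma_{s,i}(v,\vc{\theta})\, \dd{E}_{\pi}[1(R_{s,i}>v) f_{\vc{J}(v),\vc{\theta}}] + \sum_{i \in \sr{J}} \gamma_{s,i}^{(v)}(\vc{\theta})\, \dd{E}_{\pi}[1(L_{i}=0) f_{\vc{J}(v),\vc{\theta}}],
\end{align*}
where $\gamma_{s,i}^{(v)}(\vc{\theta}) = \gamma_{s,i}(v,\vc{\theta})$ for $i \in J_{s}(v)$ and $\gamma_{s,i}(\vc{\theta})$ otherwise.

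For part (a), take $v=\infty$ so that the truncation sums vanish, and fix $\vc{\theta} \in \Gamma^{\rs{in}}_{k} \cap M_{k}$ with $\theta_{k}>0$ and $\varphi_{k}(\vc{\theta})<\infty$. Because $\gamma_{s,i}(\vc{\theta})>0$ for every $i \ne k$, the identity reduces to
\begin{align*}
|\gamma(\vc{\theta})|\, \dd{E}_{\pi}[f_{\vc{\theta}}] + \gamma_{s,k}(\vc{\theta})\, \dd{E}_{\pi}[1(L_{k}=0) f_{\vc{\theta}}] = \sum_{i \ne k} \gamma_{s,i}(\vc{\theta})\, \dd{E}_{\pi}[1(L_{i}=0) f_{\vc{\theta}}].
\end{align*}
The crux is to show each boundary expectation on the right is finite. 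Using $\varphi_{k}(\vc{\theta})<\infty$ together with the sign dichotomy carried by $M_{k}$ (all $\theta_{j}$, $j \ne k$, simultaneously $\ge 0$ or simultaneously $<0$), one compares $e^{\br{\vc{\theta},\vc{L}}} 1(L_i = 0)$ with $e^{\br{\vc{\theta},\vc{L}}} 1(L_k = 0)$ up to a residual-time factor whose stationary expectation is finite by the very definition \eq{gamma 2} of $\gamma_{e,j},\gamma_{s,j}$; this yields $\dd{E}_{\pi}[1(L_i=0) f_{\vc{\theta}}]<\infty$ for each $i$. Rearranging gives $\dd{E}_{\pi}[f_{\vc{\theta}}]<\infty$, and another residual-time integration then produces $\dd{E}_{\pi}[e^{\br{\vc{\theta},\vc{L}}}]<\infty$. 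Markov's inequality supplies $\dd{P}(\vc{L} \in x\vcn{e}_{k}+B_{0}) \le C(B_{0},\vc{\theta})\, e^{-\theta_{k} x}$, and the supremum over admissible $\vc{\theta}$ yields \eq{upper 1a}.

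For part (b), general light-tailed distributions force us to keep $v<\infty$ and invoke (A1). Set $\vc{J}(v)=(\sr{E},\sr{J}\setminus A)$ so that every $\gamma_{e,i}$ and every $\gamma_{s,i}$ with $i \notin A$ appears in a truncation sum. Conditional on $\{R_{e,i}>v\}$, assumption (A1) bounds the MGF of $R_{e,i}-v$ uniformly by $h$, so each $\dd{E}_{\pi}[1(R_{e,i}>v)\, f_{\vc{J}(v),\vc{\theta}}]$ is at most $C(\vc{\theta})\, \dd{P}_{\pi}(R_{e,i}>v)$, which tends to zero as $v\uparrow\infty$; the service-time terms are controlled identically. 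Combined with $\gamma_{\vc{J}(v)}(\vc{\theta}) \to \gamma(\vc{\theta})$ uniformly on compact sets (noted below \eq{gamma 3}), dominated convergence lets us pass to the limit in the identity. The rearrangement of part (a) then yields $\dd{E}_{\pi}[e^{\br{\vc{\theta},\vc{L}}}]<\infty$ for every $\vc{\theta} \in \Gamma^{\rs{in}}_{A}$ satisfying $\varphi_{i}(\vc{\theta})<\infty$ for all $i \in A$. The coordinate condition $\vc{c}_{A}>\vc{0}_{A}$ ensures $\br{\vc{\theta},\vc{L}}$ captures the large components of $\vc{L}$ on the event $\vc{L} \in x\vc{c}+B_{0}$, and Markov delivers \eq{upper boundary 1}. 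The marginal bound \eq{upper marginal 1} follows by extending to $\overleftarrow{\Gamma}^{\rs{in}}_{A}$: if $\vc{\theta} \le \vc{\theta}'$ componentwise with $\vc{\theta}' \in \Gamma^{\rs{in}}_{A}$ of finite MGF, then $\dd{E}[e^{\br{\vc{\theta},\vc{L}}}] \le \dd{E}[e^{\br{\vc{\theta}',\vc{L}}}]<\infty$, and Markov applied with $\vc{\theta}=u\vc{c}$ for $u$ approaching $m_{A}(\vc{c})$ completes the argument.

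The main technical obstacle is the pair of integrability transfers behind this skeleton: first, justifying the stationary identity with every term finite, which requires a careful localization in $t$ (and in $v$ for part (b)) together with Fatou's lemma when truncations are removed; and second, passing from finiteness of $\dd{E}_{\pi}[f_{\vc{\theta}}(X)]$, which concerns queue lengths \emph{and} residual times, to finiteness of $\dd{E}_{\pi}[e^{\br{\vc{\theta},\vc{L}}}]$, which concerns queue lengths alone. The sign constraints in $M_{k}$ and $\vc{c}_{A}>\vc{0}_{A}$ are designed precisely to make the second transfer clean, via the exponential-moment identities \eq{gamma 2} built into $\gamma_{e,i},\gamma_{s,i}$; in part (b), (A1) is what keeps the truncation error in the first transfer uniformly small.
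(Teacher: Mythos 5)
Your route (take the stationary expectation of the semi-martingale identity \eq{martingale 4}, extract a balance equation, deduce finiteness of $\dd{E}(e^{\br{\vc{\theta},\vc{L}}})$, finish with Markov's inequality) is genuinely different from the paper's, which never works with a stationary balance identity at all: it uses the cycle formula \eq{cycle 1}, the hitting time $\tau_x$ of $B(x)$, and the exponential change of measure \eq{exp 2}, so that only the initial-state expectation $\widetilde{\dd{E}}^{(\vc{\theta})}_{\nu^{-}_{A}}(f_{\vc{\theta}}(X(0)))$ ever needs to be finite (Lemmas \lemt{Y 1} and \lemt{finite 1}, with (A1) entering only at that step). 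Unfortunately, as written your argument has genuine gaps. The central one is circularity in deriving the balance identity: to take $\dd{E}_{\pi}$ of \eq{martingale 4} and use $\dd{E}_{\pi}(M_{\vc{J}(v),\vc{\theta}}(t))=0$ you must already know $\dd{E}_{\pi}(f_{\vc{J}(v),\vc{\theta}}(X))<\infty$ together with finiteness of all boundary terms, which is essentially the conclusion you are after. ``Localization in $t$'' cannot manufacture this: the problem is integrability in space at a fixed time, and the only truncation available in the paper's framework is of the residual times (the $\vc{J}(v)$ device); truncating the queue-length part $e^{\br{\vc{\theta},\vc{z}}}$ destroys the terminal condition of \lem{terminal 1} on which the martingale rests. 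This is precisely why the paper works pathwise over one excursion instead.

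There are further concrete problems. First, the rearranged identity you use in part (a) has the wrong sign structure: from \eq{martingale 4} the correct relation is ${\gamma}(\vc{\theta})\,\dd{E}_{\pi}(f) = \sum(\mbox{truncation terms}) + \sum_{i}\gamma^{(v)}_{s,i}(\vc{\theta})\,\dd{E}_{\pi}(1(L_{i}=0)f)$, so for $\vc{\theta}\in\Gamma^{\rs{in}}_{k}$ the face terms with $i\ne k$ (where $\gamma_{s,i}(\vc{\theta})>0$) end up on the \emph{same} side as $|{\gamma}(\vc{\theta})|\,\dd{E}_{\pi}(f)$, not opposite it as you wrote; and your plan to bound $\dd{E}_{\pi}(1(L_{i}=0)f_{\vc{\theta}})$ for $i\ne k$ by comparison with the face $\{L_{k}=0\}$ using only $\varphi_{k}(\vc{\theta})<\infty$ has no justification --- on the face $\{L_{i}=0\}$ the weight $e^{\theta_{k}L_{k}}$ is exactly the kind of quantity whose finiteness the theorem is about, and nothing relates it to the face $\{L_{k}=0\}$. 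Second, the transfer from $\varphi_{k}(\vc{\theta})<\infty$ (queue lengths only) to expectations of the full test function, whose residual-time exponents $-\gamma_{e,i}(\theta_{i})$, $-\gamma_{s,i}(\vc{\theta})$ are positive when $\theta_{i}<0$ (which does occur in $M_{k}$), is nontrivial; the paper devotes \lem{finite 1} to it, via Palm calculus and a Sadowsky--Szpankowski type lemma, whereas you only gesture at \eq{gamma 2}. Third, in (b) the bound $\dd{E}_{\pi}(1(R_{e,i}>v)f_{\vc{J}(v),\vc{\theta}})\le C(\vc{\theta})\dd{P}_{\pi}(R_{e,i}>v)$ tacitly factorizes $\vc{L}$ and $\vc{R}$ under $\pi$ (they are dependent) and again presupposes the finiteness being proved; note also that (A1) is used in the paper for a different purpose (step 3, finiteness at the exit time), not for truncation-error control. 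Finally, even if repaired, your Markov-inequality endgame requires $\varphi(\vc{\theta})<\infty$ at \emph{every} $\vc{\theta}\in\Gamma^{\rs{in}}_{k}\cap M_{k}$ with $\varphi_{k}(\vc{\theta})<\infty$, a statement stronger than \eq{upper 1a} itself and established nowhere; the paper's hitting-time argument is designed to avoid exactly this.
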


For $B \subset \dd{R}_{+}^{d}$, define a convex corn as
\begin{align*}
  {\rm Corn}(B) = \{\vc{x} \in \dd{R}_{+}^{d}; u \vc{x} \in B, \exists u > 0\}.
\end{align*}

\begin{figure}[h] 
   \centering
   \includegraphics[height=4.1cm]{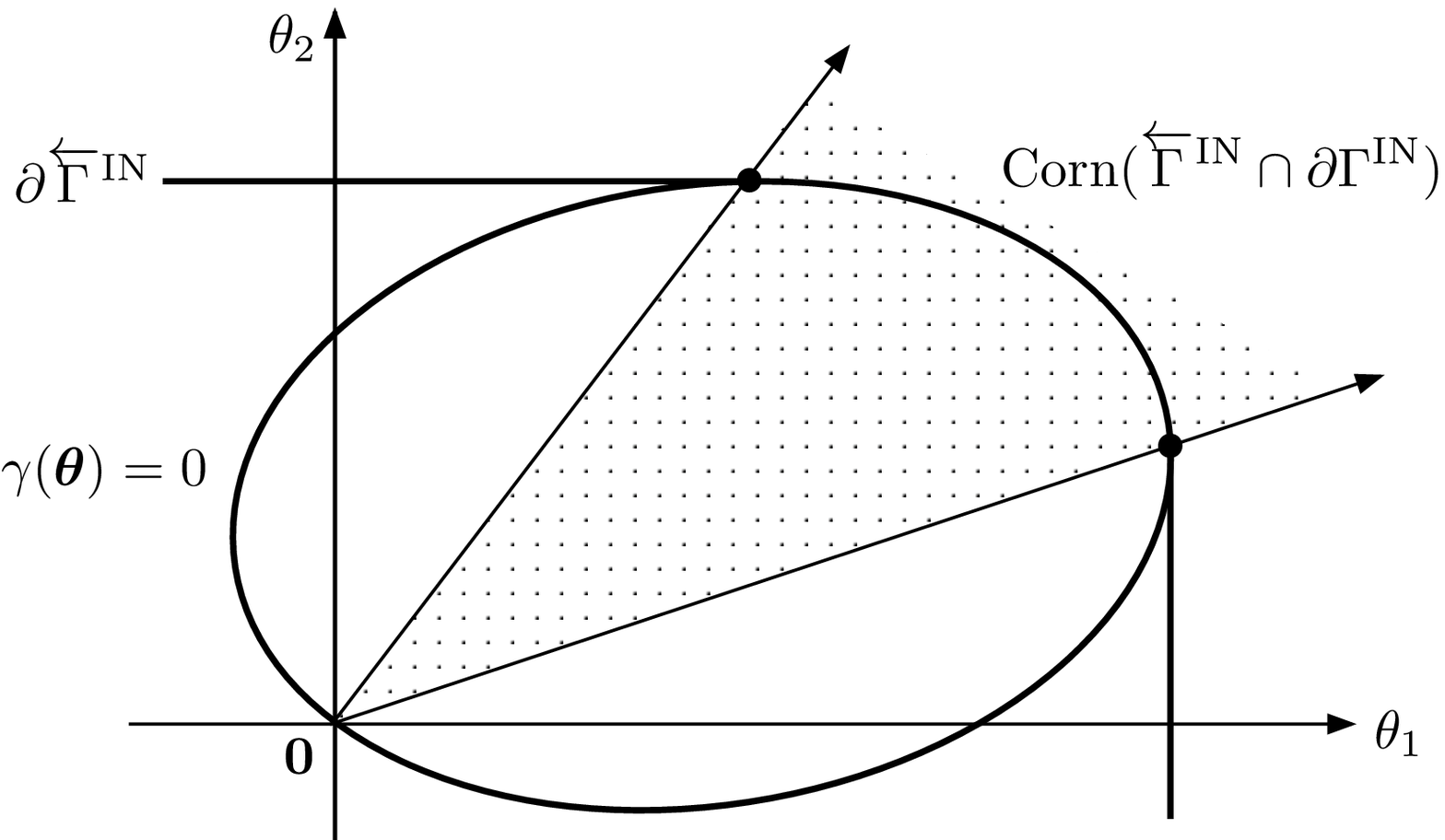} \hspace{1ex} \includegraphics[height=4.1cm]{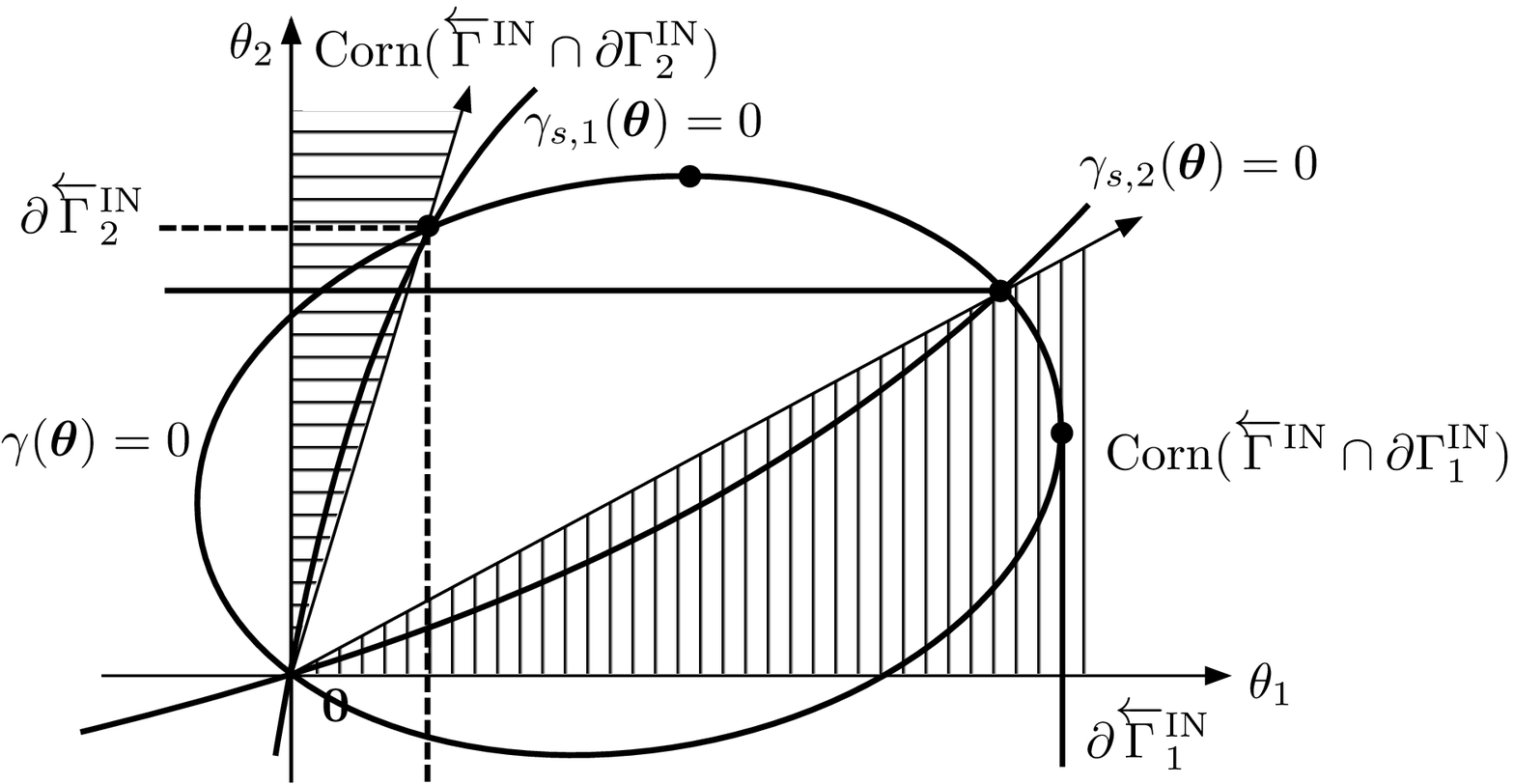} 
   \caption{Corns used in \thr{decay 2}  below}
   \label{fig:convex corn}
\end{figure}

\begin{theorem} 
\label{thr:decay 2}
Assume that the GJN is stable. (a) For $d=2$, let $B_{0}$ be a compact set of $\dd{R}_{+}^{2}$, then, for $k=1,2$,
\begin{align}
\label{eq:lower 1a}
 & \liminf_{x \to \infty} \frac 1x \log \dd{P}(\vc{L} \in x \vcn{e}_{k} + B_{0}) \ge - r_{*}(\vcn{e}_{k}).
\end{align}
(b) For general $d \ge 2$ and $\vc{c} \in \overrightarrow{U}_{d}$ if $\vc{c} \in {\rm Corn}(\overleftarrow{\Gamma}^{\rs{in}} \cap \partial \Gamma^{\rs{in}})$,
\begin{align}
\label{eq:lower 2}
 & \liminf_{x \to \infty} \frac 1x \log \dd{P}(\br{\vc{c},\vc{L}} > x) \ge - \sup\big\{u \ge 0; u \vc{c} \in \Gamma^{\rs{in}} \big\}.
\end{align}
(c) For $d = 2$, $k=1,2$ and $\vc{c} \in \overrightarrow{U}_{2}$, if $\vc{c} \in {\rm Corn}(\overleftarrow{\Gamma}^{\rs{in}} \cap \partial \Gamma^{\rs{in}}_{k})$,
\begin{align}
\label{eq:lower 1b}
 & \liminf_{x \to \infty} \frac 1x \log \dd{P}(\br{\vc{c},\vc{L}} > x ) \ge - \sup\{u \ge 0; u \vc{c} \in \overleftarrow{\Gamma}^{\rs{in}}_{k}\}.
\end{align}
\end{theorem}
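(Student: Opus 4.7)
All three parts are proved by the exponential-tilting change of measure based on the martingales of \lem{martingale 2}, following the single-queue scheme of \cite{Miya2017} and the machinery set up in \sectn{change}. The common blueprint is: given a unit direction $\vc{c}$, pick a tilting parameter $\vc{\theta}^{*}$ on the relevant piece of $\partial\Gamma^{\rs{in}}$ or of $\partial\Gamma^{\rs{in}}_{k}$, form the multiplicative functional
\[
 D_{\vc{\theta}^{*}}(t)=\frac{f_{\vc{\theta}^{*}}(X(t))}{f_{\vc{\theta}^{*}}(X(0))}\exp\Bigl(\gamma(\vc{\theta}^{*})\,t-\sum_{i}\gamma_{s,i}(\vc{\theta}^{*})\int_{0}^{t}1_{\{L_{i}(u)=0\}}\,du\Bigr),
\]
(first working with the truncated variant $D_{\vc{J}(v),\vc{\theta}^{*}}$ and then letting $v\to\infty$), and use it to define a tilted law $\tilde{\dd{P}}$. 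Under $\tilde{\dd{P}}$ the GJN primitives are exponentially tilted, and by Lemmas \lemt{unstable 1}--\lemt{geometric 1} the resulting traffic intensities make $\vc{L}(t)/t$ drift along a direction $\vc{v}$ with $\br{\vc{c},\vc{v}}>0$, while the stations with $\gamma_{s,i}(\vc{\theta}^{*})\ge 0$ remain weakly stable. Running the process from the stationary law $\pi$ and applying $\dd{P}_{\pi}(E)=\tilde{\dd{E}}_{\pi}[D_{\vc{\theta}^{*}}(t)^{-1}1_{E}]$ at time $t_{x}\propto x$, with $E=\{\br{\vc{c},\vc{L}(t_{x})}\in[x,x+O(1)]\}$, yields $\dd{P}_{\pi}(E)\ge \exp(-\br{\vc{\theta}^{*},\vc{c}}x\,(1+o(1)))$ because on $E$ one has $\br{\vc{\theta}^{*},\vc{L}(t_{x})}=\br{\vc{\theta}^{*},\vc{c}}\br{\vc{c},\vc{L}(t_{x})}$ (up to the orthogonal component, which we control via tightness) and $\tilde{\dd{P}}_{\pi}(E)$ stays bounded away from $0$.

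For (b), the cone hypothesis combined with convexity and boundedness of $\Gamma^{\rs{in}}$ gives $u^{*}:=\sup\{u\ge 0:u\vc{c}\in\Gamma^{\rs{in}}\}$ finite and $\vc{\theta}^{*}:=u^{*}\vc{c}\in\partial\Gamma^{\rs{in}}$, hence $\gamma(\vc{\theta}^{*})=0$ and the time-coefficient in $D_{\vc{\theta}^{*}}$ disappears; the auxiliary condition $u^{*}\vc{c}\in\overleftarrow{\Gamma}^{\rs{in}}$ (existence of a componentwise-larger $\vc{\theta}'\in\Gamma^{\rs{in}}$) is exactly what lets one invoke \lem{geometric 1}(b) on the tilted primitives to extract a strictly positive drift along $\vc{c}$ under $\tilde{\dd{P}}$; the rate obtained is $\br{\vc{\theta}^{*},\vc{c}}=u^{*}$. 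Part (a), specialised to $d=2$ and $\vc{c}=\vcn{e}_{k}$, takes $\vc{\theta}^{*}$ approaching the supremum defining $r_{*}(\vcn{e}_{k})$ inside $\Gamma^{\rs{in}}_{k}\cap M_{k}$: being in $\Gamma^{\rs{in}}_{k}$ forces $\gamma_{s,j}(\vc{\theta}^{*})>0$ for $j\ne k$, so under $\tilde{\dd{P}}$ only station $k$ becomes unstable while the other remains tight, and that tightness upgrades the marginal event to the joint localisation $\{\vc{L}(t_{x})\in x\vcn{e}_{k}+B_{0}\}$; the finiteness $\varphi_{k}(\vc{\theta}^{*})<\infty$ in the definition of $r_{*}$ is precisely the integrability needed to control the $\gamma_{s,k}(\vc{\theta}^{*})\int_{0}^{t}1_{\{L_{k}(u)=0\}}\,du$ factor in $D_{\vc{\theta}^{*}}$. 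Part (c) hybridises (a) and (b): take $\vc{\theta}^{*}=u^{*}\vc{c}$ on the arc $\partial\Gamma^{\rs{in}}\cap\Gamma_{k}$ of $\partial\Gamma^{\rs{in}}_{k}$, so only station $k$ is unstable under $\tilde{\dd{P}}$ yet the drift of $\br{\vc{c},\vc{L}}$ is still positive.

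Two main obstacles remain. The first is certifying that $D_{\vc{\theta}^{*}}$ is a genuine mean-one martingale and that the tilt is well-defined: this is handled via the truncation parameter $\vc{J}(v)$ from \lem{martingale 2}, assumption (A1) on conditional MGFs where applicable (which controls the contribution of the residual interarrival and service times at time $t_{x}$), Doob-type uniform-integrability estimates, and a final $v\to\infty$ passage to the limit, together with \eq{finite 1}. The second is bridging the transient large-deviations lower bound at time $t_{x}$ with the stationary-tail statement; this is done cleanly by initialising the PDMP under $\pi$ so that $\vc{L}(t_{x})$ has law $\pi$ under $\dd{P}$, at which point the change-of-measure identity above applies directly. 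The delicate point is to show that the drift direction $\vc{v}$ under $\tilde{\dd{P}}$ is transversal enough to $\vc{c}$ (i.e.\ $\br{\vc{c},\vc{v}}>0$) that the event $E$ carries non-negligible $\tilde{\dd{P}}$-probability; in (a) and (c) this is immediate since only one station is unstable, while in (b) it requires the perturbation argument mentioned above.
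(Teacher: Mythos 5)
Your blueprint (start in stationarity, tilt by $E^{f_{\vc{\theta}^{*}}}$, run to a deterministic time $t_{x}\propto x$) is genuinely different from the paper's route, but as written it has a rate-destroying gap: the boundary terms in the density. On $\sr{F}_{t_{x}}$ the inverse multiplicative functional is, by \eq{exp 1}, proportional to $\exp\big({\gamma}(\vc{\theta}^{*})t_{x}-\sum_{i\in\sr{J}}\gamma_{s,i}(\vc{\theta}^{*})\int_{0}^{t_{x}}1(L_{i}(u)=0)\,du\big)$, and in parts (a) and (c) your $\vc{\theta}^{*}$ lies in (or on the closure of) $\Gamma^{\rs{in}}_{k}$, so $\gamma_{s,j}(\vc{\theta}^{*})>0$ for $j\ne k$. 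Under the tilted law those stations are \emph{stable}, hence their idle time $\int_{0}^{t_{x}}1(L_{j}(u)=0)\,du$ is of order $t_{x}\propto x$ on the typical paths that realize your event $E$, so the change-of-measure identity yields an extra factor $e^{-c\,x}$ with $c>0$ and the bound you obtain is strictly worse than $-r_{*}(\vcn{e}_{k})$ (resp.\ $-u^{*}$). Your remark that $\varphi_{k}(\vc{\theta}^{*})<\infty$ controls the $\gamma_{s,k}$-idle-time factor misplaces the difficulty: station $k$ is the unstable one and its idle time is harmless; it is the stable stations' local-time-type terms that must be dealt with, and your proposal never does. The paper circumvents exactly this by not using a fixed-time identity at all: it writes the stationary probability through the cycle formula \eq{cycle 1} anchored at $\partial_{A}S_{1}$ (so for $A=\{k\}$ the term for station $k$ never appears, since $L_{k}(u)>0$ on the cycle), applies the change of measure up to the \emph{hitting time} $\tau_{x}$ of $B(x)$ as in \eq{exp 2}--\eq{exp 3}, and chooses $\vc{\theta}\in\Gamma^{\rs{out}}$ with $\gamma_{s,j}(\vc{\theta})<0$ for $j\in\sr{J}\setminus A$ (step 1')), so the remaining boundary terms have a favorable sign; the only probabilistic input then needed is that the tilted network reaches $B(x)$ with probability bounded away from zero, which is supplied by \lem{hitting 1} for (a), (c) and \lem{divergent 1} for (b).

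A second, related problem concerns your claim that $\widetilde{\dd{P}}_{\pi}(E)$ stays bounded away from zero for $E=\{\br{\vc{c},\vc{L}(t_{x})}\in[x,x+O(1)]\}$. This needs a strictly positive drift along $\vc{c}$ \emph{and} a local-limit type estimate for an $O(1)$ window (the fluctuations at time $t_{x}$ are of order $\sqrt{x}$); in part (b) the hypothesis $\vc{c}\in{\rm Corn}(\overleftarrow{\Gamma}^{\rs{in}}\cap\partial\Gamma^{\rs{in}})$ only gives $\nabla\gamma(u\vc{c})\ge\vc{0}$, i.e.\ \emph{weak} instability of all stations under the tilt (null recurrence is allowed), so a positive drift is not available and your fixed-time event can have vanishing tilted probability. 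The paper's hitting-time formulation only requires $\tau_{x}<\infty$ almost surely (or with probability bounded below), which is exactly what weak instability delivers, together with the occupation-time lower bound $Y(\tau_{x})\ge C_{2}$ of step 2'). So either you must switch to hitting times and the cycle representation — at which point you are reconstructing the paper's proof and will need Lemmas \lemt{hitting 1} and \lemt{divergent 1} and the truncation device $\vc{J}(v)$ of \eq{J negative 1} to bound $w_{\vc{J}(v)}(\vc{\theta},\vc{R}(\tau_{x}))$ from below — or you must add a genuinely new argument controlling the stable stations' idle time and a local limit theorem under the tilted law; neither is sketched in your proposal.
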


For $d=2$, we can get bounds explicitly. For this, let
\begin{align}
\label{eq:cp 1a}
 & \delta_{1} = \sup\{ \theta_{1} \ge 0; \vc{\theta} \in \Gamma^{\rs{in}}_{1}, \theta_{2} \le \delta_{2} \},\\
\label{eq:cp 1b}
 & \delta_{2} = \sup\{ \theta_{2} \ge 0; \vc{\theta} \in \Gamma^{\rs{in}}_{2}, \theta_{1} \le \delta_{1} \},
\end{align}
which are known to have a unique solution $\vc{\delta} = (\delta_{1}, \delta_{2})$ (see the proof of \cor{decay 1} in \sectn{theorem}), and define
\begin{align*}
  \sr{D}_{2} = \{\vc{\theta} \in \overleftarrow{\Gamma}^{\rs{in}}; \theta_{i} < \delta_{i}, i=1,2\}.
\end{align*}
Then we have the following corollary.

\begin{corollary}
\label{cor:decay 1}
Assume the stable GJN has two stations ($d=2$). (a) For $k = 1,2$,
\begin{align}
\label{eq:decay 1}
 & \lim_{x \to \infty} \frac 1x \log \dd{P}(\vc{L} \in x \vcn{e}_{k} + B_{0}) = - \sup\big\{ \theta_{k}; \vc{\theta} \in \sr{D}_{2} \big\}.
\end{align}
(b) If (A1) is satisfied, then, for $\vc{c} \in \overrightarrow{U}_{2}$,
\begin{align}
\label{eq:upper 3}
 & \limsup_{x \to \infty} \frac 1x \log \dd{P}(\vc{L} \in x \vc{c} + B_{0}) \le - \sup\big\{ \br{\vc{\theta}, \vc{c}}; \vc{\theta} \in \sr{D}_{2} \big\},\\
\label{eq:decay 2}
 & \lim_{x \to \infty} \frac 1x \log \dd{P}(\br{\vc{c}, \vc{L}} > x) = - \sup\big\{ u; u \vc{c} \in \sr{D}_{2} \big\}.
\end{align}
\end{corollary}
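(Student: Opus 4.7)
The plan is to reduce \cor{decay 1} to \thrq{decay 1}{decay 2} by identifying the suprema over $\sr{D}_{2}$ with the quantities $r_{*}(\vcn{e}_{k})$, $r_{A}(\vc{c})$ and $m_{A}(\vc{c})$ appearing in those theorems.

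First I would establish existence and uniqueness of the pair $\vc{\delta} = (\delta_{1},\delta_{2})$ solving \eq{cp 1a}--\eq{cp 1b}. For fixed $\delta_{2}' \ge 0$ define $\phi_{1}(\delta_{2}') = \sup\{\theta_{1} \ge 0; \vc{\theta} \in \Gamma^{\rs{in}}_{1},\, \theta_{2} \le \delta_{2}'\}$, and $\phi_{2}(\delta_{1}')$ analogously. Each is nondecreasing and bounded above by the diameter of the bounded convex set $\Gamma^{\rs{in}}$, so $\phi_{1}\circ\phi_{2}$ is a nondecreasing self-map on a compact interval. A monotone iteration starting from $0$ then converges to a fixed point; uniqueness follows from strict convexity of $\gamma$ and the $\gamma_{s,i}$ together with the geometric picture in \fig{2d-GJN-C}.

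For part (a), \thr{decay 1}(a) and \thr{decay 2}(a) give matching upper and lower bounds both equal to $-r_{*}(\vcn{e}_{k})$, so it remains to show $r_{*}(\vcn{e}_{k}) = \delta_{k} = \sup\{\theta_{k}; \vc{\theta}\in\sr{D}_{2}\}$. Since $M_{k}=\dd{R}^{2}$ for $d=2$, one has $r_{*}(\vcn{e}_{k}) = \sup\{\theta_{k}; \vc{\theta}\in\Gamma^{\rs{in}}_{k},\, \varphi_{k}(\vc{\theta})<\infty\}$. The key identification is that $\varphi_{k}(\vc{\theta})$ is finite precisely when $\theta_{3-k} < \delta_{3-k}$, which is exactly the self-consistency encoded by \eq{cp 1a}--\eq{cp 1b}; the geometric identity $\delta_{k} = \sup\{\theta_{k}; \vc{\theta}\in\sr{D}_{2}\}$ is then immediate from the definition of $\sr{D}_{2}$.

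For part (b), inequality \eq{upper 3} follows from \thr{decay 1}(b) by checking, for $d=2$ and $A\in 2^{\{1,2\}}\setminus\{\emptyset\}$, that the feasibility sets $\Gamma^{\rs{in}}_{A} \cap \{\varphi_{i}<\infty, i\in A\}$ are each contained in the closure of $\sr{D}_{2}$ and together exhaust it, so $\max_{A} r_{A}(\vc{c}) = \sup\{\br{\vc{\theta},\vc{c}}; \vc{\theta}\in\sr{D}_{2}\}$. The equality \eq{decay 2} combines the upper bound $-\max_{A} m_{A}(\vc{c})$ from \thr{decay 1}(b) with the lower bounds in \thr{decay 2}(b)--(c): for $d=2$ and any $\vc{c} \in \overrightarrow{U}_{2}$, the boundary of $\overleftarrow{\Gamma}^{\rs{in}}$ decomposes into pieces of $\partial\Gamma^{\rs{in}}$ and of $\partial\Gamma^{\rs{in}}_{k}$ for $k=1,2$ (see \fig{2d-GJN-C}), so $\vc{c}$ lies in at least one of the associated corns, and in each case the matching lower bound $-\sup\{u; u\vc{c}\in\sr{D}_{2}\}$ is produced by the relevant part of \thr{decay 2}.

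The main obstacle is the identification $r_{*}(\vcn{e}_{k}) = \delta_{k}$ in part (a): the convergence domain of the conditional MGF $\varphi_{k}(\vc{\theta})$ is self-referentially tied to the marginal decay rate at the other station restricted to $\{L_{k}=0\}$, and the fixed-point equations \eq{cp 1a}--\eq{cp 1b} are precisely the resolution of this circularity in two dimensions. A secondary difficulty is the boundary case analysis for the corn decomposition in part (b) when $\vc{c}$ sits at the interface between $\text{Corn}(\overleftarrow{\Gamma}^{\rs{in}} \cap \partial\Gamma^{\rs{in}})$ and $\text{Corn}(\overleftarrow{\Gamma}^{\rs{in}} \cap \partial\Gamma^{\rs{in}}_{k})$, where continuity of the bounds along the boundary must be invoked.
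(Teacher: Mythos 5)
Your reduction to \thrq{decay 1}{decay 2} follows the same skeleton as the paper, and your monotone-iteration sketch for unique solvability of \eq{cp 1a}--\eq{cp 1b} is in the spirit of the algorithm the paper invokes. However, there is a genuine gap at precisely the point you yourself flag as the ``main obstacle'': you assert that $\varphi_{k}(\vc{\theta})<\infty$ exactly when $\theta_{3-k}<\delta_{3-k}$ and declare this to be ``exactly the self-consistency encoded by'' \eq{cp 1a}--\eq{cp 1b}, but you never prove it. The fixed-point equations define $\vc{\delta}$ purely geometrically from the sets $\Gamma^{\rs{in}}_{k}$, whereas $\varphi_{k}$ is a transform of the unknown stationary boundary measure; connecting the two (equivalently, proving $r_{*}(\vcn{e}_{k})=\delta_{k}$ and that $\sr{D}_{2}$ lies in the convergence domain of $\varphi$) is the actual content of the corollary, not a restatement of the definitions. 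Your part (b) silently presupposes the same unproved characterization when you claim that the feasibility sets entering $r_{A}(\vc{c})$ and $m_{A}(\vc{c})$ ``exhaust'' $\sr{D}_{2}$. As written, the argument is circular: the convergence domain is used to identify $\vc{\delta}$, and $\vc{\delta}$ is used to describe the convergence domain.

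The paper breaks this circularity in two concrete ways that are absent from your proposal. For (a) it appeals to the iteration algorithm of Theorem 4.1 of \cite{Miya2009}, which simultaneously yields the unique solution $\vc{\delta}$ of \eq{cp 1a}--\eq{cp 1b} and the identification $r_{*}(\vcn{e}_{k})=\delta_{k}$. For (b) it bootstraps from the coordinate directions, where the finiteness constraint is vacuous because $\varphi_{k}(u\vcn{e}_{k})=\dd{P}(L_{k}=0)\le 1$: combining \eq{upper marginal 1} with $A=\{k\}$ and the lower bound \eq{lower 1b} gives the marginal decay rate $\delta_{k}$ in direction $\vcn{e}_{k}$, hence $\varphi(\theta_{1},0)<\infty$ for $\theta_{1}<\delta_{1}$ and $\varphi(0,\theta_{2})<\infty$ for $\theta_{2}<\delta_{2}$; since $\varphi_{2}(\vc{\theta})\le\varphi(\theta_{1},0)$ and $\varphi_{1}(\vc{\theta})\le\varphi(0,\theta_{2})$, the conditions in $m_{\{1,2\}}(\vc{c})$ are then verified on $\sr{D}_{2}$, giving the upper bound in \eq{decay 2}, which is matched from below by \eq{lower 2} and \eq{lower 1b}. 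Unless you either reproduce such a bootstrap or invoke the cited algorithm, your key identification remains an assertion, and both parts (a) and (b) of your argument fail to close.
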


It is notable that $\sr{D}_{2}$ have been obtained as the convergence domain of $\varphi(\vc{\theta}) \equiv \dd{E}(e^{\br{\vc{\theta},\vc{L}}})$, and used to derive \eq{decay 2} for the two station JGN with phase type $F_{e,i}, F_{s,j}$ in Theorem 4.2 of \cite{Miya2014}. The asymptotic \eq{decay 1} in the coordinate directions is not derived in \cite{Miya2014}, but can be obtained from Theorem 3.2 there. We here have asymptotic \eq{decay 1} without the phase type assumption. We conjecture that the assumption (A1) can be removed from all the results, but it seems a quite hard problem.

Similar results to \eq{decay 1} and \eq{decay 2} are known for a reflecting random walks on the quarter plane (e.g., see \cite{Miya2009,Miya2011}) and semi-martingale reflecting two dimensional Brownian motions, SRBM for short (see \cite{DaiMiya2011}). On the other hand, the asymptotic \eq{upper 3} is new for the GJN, but known for the two dimensional SRBM (\cite{AvraDaiHase2001,DaiMiya2013}), where \eq{upper 3} is sharpened.

For $d \ge 3$, there is very little known about the tail asymptotics of the stationary distribution not only for the GJN but also a reflecting random walk and SRBM.  There are some studies in the framework of sample path large deviations, but those results need to solve certain optimization problems, which are hard to solve even numerically (e.g., see \cite{Maje2009}). Contrary to them, \eq{upper boundary 1} and \eq{upper marginal 1} may be used to get explicit bounds, using ideas for a reflecting random walk (see Theorem 6.1 of \cite{Miya2011}).

\section{Change of measure for GJN}
\label{sect:change}
\setnewcounter

In this section, we present some preliminary results for proving Theorems \thrt{decay 1} and \thrt{decay 2} and \cor{decay 1}. A change of measure is typically used in the theory of large deviations. We also use it, and construct a new measure using a multiplicative functional, which is obtained from the martingale $M_{\vc{J}(v),\vc{\theta}}(\cdot)$ in \sectn{terminal}. However, we assume $\vc{J}(v) = \emptyset$ in this section for making arguments simpler. It also suffices for major applications in the later sections.

Thus, the new measure is constructed from $M_{\vc{\theta}}(\cdot) \equiv M_{\emptyset,\vc{\theta}}(\cdot)$. For this, we first drive a multiplicative functional. Its derivation is rather standard, but will be detailed because it is crucial for our arguments. Our major interest in this section is to see how the GJN is modified under the new measure. It is important for us to specifically identify its modeling parameters, which has not been studied in the literature except for the single queue case (see \cite{Miya2017}), and may have an independent interest.

\subsection{Multiplicative functional}
\label{sect:multiplicative}

Let $Y(t)$ be a left-continuous process, which is called predictable because $Y(t-)$ is $\sr{F}_{t-}$-measurable. Assume that $Y(\cdot)$ has bounded in each finite interval.  Recall that $M_{0}(\cdot)$, $M(\cdot)$ and $A(\cdot)$ be defined by \eq{martingale 2}, \eq{martingale 3} and \eq{BV 1}, respectively. Assume that the terminal condition \eq{terminal 1} is satisfied. Assume that $M(\cdot)$ is an $\sr{F}_{t}$-martingale under $\dd{P}_{\vc{x}}$ for each $\vc{x} \in S$.

We define the integral of $Y(\cdot)$ with respect to martingale $M(\cdot)$ by
\begin{align*}
  Y\cdot M(t) \equiv 1 + \int_{0}^{t} Y(u) dM(u),
\end{align*}
where integration is a natural extension of a Riemann-Stieltjes integral (see Section 4d of Chapter I of \cite{JacoShir2003}). For a positive valued test function $f$, choose $Y(t)$ as
\begin{align*}
  Y(t) = \frac 1{f(X(0))} \exp\Big( - \int_{0}^{t} \frac {\sr{A}f(X(u))} {f(X(u))} du \Big),
\end{align*}
which is obviously positive and continuous in $t$ and adapted to $\sr{F}_{t}$. Hence, $Y\cdot M(\cdot)$ is martingale. We denote it by $E^{f}(\cdot)$. Thus, it follows from \eq{martingale 3} that
\begin{align}
\label{eq:martingale 5}
  E^{f}(t) & = 1 + \int_{0}^{t} Y(u) \Big( df(X(u)) - \sr{A}f(X(u)) du \Big)  \nonumber\\
  & = 1 + \int_{0}^{t} Y(u) df(X(u)) + \int_{0}^{t} f(X(u)) Y'(u) du \nonumber\\
  & = 1 + [Y(u) f(X(u))]_{0}^{t}  =  \frac {f(X(t))}{f(X(0))} \exp\Big( - \int_{0}^{t} \frac {\sr{A}f(X(u))} {f(X(u))} du, \Big) ,
\end{align}
which is an $\sr{F}_{t}$-martingale under $\dd{P}_{\vc{x}}$.

On the other hand, $E^{f}(\cdot)$ is a multiplicative functional because it is right-continuous, $E^{f}(0)=1$, $\dd{E}(E^{f}(t)) = 1$ and
\begin{align*}
  E^{f}(s+t) = E^{f}(s) \Theta_{s} \circ E^{f}(t), \qquad s,t \ge 0,
\end{align*}
where
\begin{align*}
 \Theta_{s} \circ E^{f}(t) = \frac {f(X(s+t))}{f(X(s))} \exp\Big( - \int_{s}^{s+t} \frac {\sr{A}f(X(u))} {f(X(u))} du, \Big).
\end{align*}
Thus, we can define a probability measure $\widetilde{\dd{P}}^{f}_{\vc{x}}$ for an initial state $\vc{x} \in S$ by
\begin{align}
\label{eq:change 1}
  \frac {d\widetilde{\dd{P}}_{\vc{x}}^{f}} {d\dd{P}_{\vc{x}}} \Big|_{\sr{F}_{t}} = E^{f}(t), \qquad t \ge 0,
\end{align}
because $E^{f}(\cdot)$ is a martingale (see \cite{KuniWata1963} for details). We refer to \eq{change 1} as exponential change of measure. Let $\dd{P}_{\nu}$ and $\widetilde{\dd{P}}_{\nu}^{f}$ be probability measures such that $\dd{P}_{\nu}(C) = \int_{S} \dd{P}_{\vc{x}}(C)\nu(d\vc{x})$ and $\widetilde{\dd{P}}_{\nu}^{f}(C) = \int_{S} \widetilde{\dd{P}}^{f}_{\vc{x}}(C)\nu(d\vc{x})$ for $X(0)$ to have a probability distribution $\nu$ on $S$, \eq{change 1} implies that, for a non-negative $\sr{F}_{t}$-measurable random variable $U$ with finite expectation, we have
\begin{align}
\label{eq:change 2}
   \widetilde{\dd{E}}_{\nu}^{f}(U) = \dd{E}_{\nu}(UE^{f}(t)), \qquad \dd{E}_{\nu}(U) = \widetilde{\dd{E}}_{\nu}^{f}(E^{f}(t)^{-1}U),
\end{align}
where $\dd{E}_{\nu}$ and $\widetilde{\dd{E}}_{\nu}^{f}$ represent the expectations concerning $\dd{P}_{\nu}$ and $\widetilde{\dd{P}}_{\nu}^{f}$, respectively. Similarly, for conditional expectations, we have, for $0 \le s < t$,
\begin{align}
\label{eq:change 3}
  \widetilde{\dd{E}}(U|\sr{F}_{s}) = \dd{E}\Big(U\frac {E^{f}(t)}{E^{f}(s)} \Big|\sr{F}_{s}\Big), \qquad \dd{E}(U |\sr{F}_{s}) = \widetilde{\dd{E}}\Big(U\frac {E^{f}(s)}{E^{f}(t)} \Big|\sr{F}_{s}\Big).
\end{align}
One can easily check this equation from the definition of a conditional expectation (see, e.g., Section III.3 of \cite{JacoShir2003}).

When $f = f_{\vc{\theta}}$ of \eq{test 1} and $M = M_{\vc{\theta}}$ of \eq{martingale 4} with $\vc{J}(v) = \emptyset$, we denote  denotes $\widetilde{\dd{P}}^{f_{\vc{\theta}}}_{\vc{x}}$ by $\widetilde{\dd{P}}^{(\vc{\theta)}}_{\vc{x}}$. If $\vc{J}(v) \ne \emptyset$, then the new measure is denoted by $\widetilde{\dd{P}}^{(\vc{J}(v),\vc{\theta)}}_{\vc{x}}$.

\subsection{GJN under the new measure}
\label{sect:new}

Let us consider how the GJN is modified under the new measure $\widetilde{\dd{P}}^{(\vc{\theta})}_{\vc{x}}$. A general principle for change of measure is considered for a PDMP in \cite{PalmRols2002}, but we need to compute specific modeling parameters. For this, we follow the method of \cite{Miya2017} studied for a single queue with many heterogeneous servers. We here modify it for the GJN. Since the differential operator $\sr{A}$ is unchanged because it works on a deterministic part of the sample path of $X(\cdot)$, we only need to consider the jump kernel $\sr{K}$. Denote it under $\widetilde{\dd{P}}^{(\vc{\theta})}_{\vc{x}}$ by $\widetilde{\sr{K}}^{(\vc{\theta})}$.

We first write $E^{f_{\vc{\theta}}}(t)$ explicitly as
\begin{align}
\label{eq:Ef 1}
  & E^{f_{\vc{\theta}}}(t) = e^{\br{\vc{\theta}, \vc{L}(t) - \vc{L}(0)} - w(\vc{\theta}, \vc{R}(t) - \vc{R}(0))- {\gamma}(\vc{\theta}) t + \int_{0}^{t} \sum_{i \in \sr{J}} \gamma_{s,i}(\vc{\theta}) 1(L_{i}(u) = 0) du },
\end{align}
where $w(\vc{\theta},\vc{y}) = w_{\emptyset}(\vc{\theta},\vc{y})$ (see \eq{w J 1}), that is,
\begin{align*}
  w(\vc{\theta}, \vc{y}) & = \sum_{i \in \sr{E}} \gamma_{e,i}(\theta_{i}) y_{e,i} + \sum_{i \in \sr{J}} \gamma_{s,i}(\theta_{i}) y_{s,i}.
\end{align*}

Our first task is to compute the distributions of $T_{e,i}, T_{s,j}$ under $\widetilde{\dd{P}}^{(\vc{\theta})}_{\vc{x}}$. These distributions (moment generating functions) are denoted, respectively, by $F_{e,i}^{(\vc{\theta})}$ ($\widehat{F}_{e,i}^{(\vc{\theta})}$) and $F_{s,j}^{(\vc{\theta})}$ ($\widehat{F}_{s,j}^{(\vc{\theta})}$). Recall $\beta^{(v)}_{F}$ of \eq{beta 1}, and denote $\beta_{F_{e,i}}^{(\infty)}$ and $\beta_{F_{s,j}}^{(\infty)}$ simply by $\beta_{e,i}^{(\infty)}$ and $\beta_{s,j}^{(\infty)}$, respectively. Similar to Lemma 4.4 of \cite{Miya2017}, we have

\begin{lemma}
\label{lem:new T 1}
For each $\vc{\theta} \in \dd{R}^{d}$, $v \in (0,\infty]$ and $\eta \in \dd{R}$,
\begin{align}
\label{eq:new T 1a}
 & \widehat{F}_{e,i}^{(\vc{\theta})}(\eta) = 
 e^{\theta_{i}} \widehat{F}_{e,i}(\eta - \gamma_{e,i}(\theta_{i})), \qquad {\eta} \le \beta_{e,i}^{(\infty)} + \gamma_{e,i}(\theta_{i}), i \in \sr{E} \setminus J_{e}(v), \\
\label{eq:new T 2a}
 & \widehat{F}_{s,i}^{(\vc{\theta})}(\eta) = 
  q_{i}(\vc{\theta}) \widehat{F}_{s,i}(\eta - \gamma_{s,i}(\vc{\theta})), \qquad {\eta} \le \beta_{s,i}^{(\infty)} + \gamma_{s,i}(\vc{\theta}), i \in \sr{J} \setminus J_{s}(v). 
\end{align}
\end{lemma}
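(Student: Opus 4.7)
The plan is to first derive the jump kernel $\widetilde{\sr{K}}^{(\vc{\theta})}$ under the new measure, and then evaluate it on exponential test functions that extract the MGF of the fresh residual time. The essential observation is that $E^{f_{\vc{\theta}}}(\cdot)$ in \eq{martingale 5} is constant on each interjump interval $(t_{n-1}, t_n)$: between jumps $X(\cdot)$ is deterministic, so by the chain rule
\begin{align*}
 \log f_{\vc{\theta}}(X(t)) - \log f_{\vc{\theta}}(X(s)) = \int_s^t \frac{\sr{A}f_{\vc{\theta}}(X(u))}{f_{\vc{\theta}}(X(u))}\, du, \qquad s \le t \in (t_{n-1}, t_n),
\end{align*}
which exactly cancels the integral in \eq{martingale 5}. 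Hence $E^{f_{\vc{\theta}}}(\tau)/E^{f_{\vc{\theta}}}(\tau-) = f_{\vc{\theta}}(X(\tau))/f_{\vc{\theta}}(X(\tau-))$ at every jump time $\tau$.

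Combining this with the change-of-measure identity \eq{change 2} applied at the stopping time $\tau=t_1$ and the strong Markov property, I would deduce that, for any bounded measurable $g$,
\begin{align*}
 \widetilde{\sr{K}}^{(\vc{\theta})} g(\vc{x}) = \frac{1}{f_{\vc{\theta}}(\vc{x})} \, \sr{K}(g f_{\vc{\theta}})(\vc{x}), \qquad \vc{x} \in \Gamma,
\end{align*}
which is a probability kernel by the terminal condition (\lem{terminal 1}). To extract $\widehat{F}^{(\vc{\theta})}_{e,i}(\eta)$, I would apply this identity at $\vc{x} \in \Gamma$ with $y_{e,i}=0$ (so the next jump is the arrival at station $i$) and take $g(\vc{x}') = e^{\eta y'_{e,i}}$. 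Every factor in $f_{\vc{\theta}}(\vc{x}')$ not involving the new interarrival time $T$ or the increment $L_i \to L_i + 1$ equals its counterpart in $f_{\vc{\theta}}(\vc{x})$, so it cancels against the denominator; what remains is $e^{\theta_i}\dd{E}[e^{(\eta - \gamma_{e,i}(\theta_i))T}] = e^{\theta_i}\widehat{F}_{e,i}(\eta - \gamma_{e,i}(\theta_i))$, which is \eq{new T 1a}. For \eq{new T 2a}, the same reasoning is applied at $\vc{x}$ with $y_{s,i}=0$, $z_i \ge 1$ and $g(\vc{x}') = e^{\eta y'_{s,i}}$; now the averaging must include the routing decision (to station $j$ with probability $p_{ij}$, exit with probability $p_{i0}$), producing the factor $e^{-\theta_i}(\sum_j p_{ij} e^{\theta_j} + p_{i0}) = q_i(\vc{\theta})$, and the fresh service time contributes $\widehat{F}_{s,i}(\eta - \gamma_{s,i}(\vc{\theta}))$. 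The stated ranges of $\eta$ are precisely the domains of convergence of $\widehat{F}_{e,i}$ and $\widehat{F}_{s,i}$ after the shift.

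The main obstacle is the first step: a careful justification of the formula for $\widetilde{\sr{K}}^{(\vc{\theta})}$. One must verify that $E^{f_{\vc{\theta}}}(t_1)$ can indeed be identified with the ratio $f_{\vc{\theta}}(X(t_1))/f_{\vc{\theta}}(X(0))$ at the stopping time $t_1$ (i.e.\ that the continuous part of the multiplicative functional cancels exactly), and that the Radon--Nikodym derivative then pushes cleanly through the regular conditional distribution of $X(t_1)$ given $X(t_1-)$. Once this is settled, the identification of $\widehat{F}^{(\vc{\theta})}_{e,i}$ and $\widehat{F}^{(\vc{\theta})}_{s,i}$ is a direct computation against the old jump kernel using only the definitions \eq{gamma 1}--\eq{gamma 2} and the exponential form of $f_{\vc{\theta}}$.
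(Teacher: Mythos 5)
Your proposal is correct and takes essentially the same route the paper relies on: the paper gives no separate proof of \lem{new T 1}, deferring to Lemma 4.4 of \cite{Miya2017}, and that argument is precisely the one you reconstruct --- constancy of $E^{f_{\vc{\theta}}}$ between jumps, the transformed jump kernel $\widetilde{\sr{K}}^{(\vc{\theta})} g = \sr{K}(g f_{\vc{\theta}})/f_{\vc{\theta}}$ on $\Gamma$ (a probability kernel by the terminal condition), and a direct computation of the MGF of the freshly set interarrival or service time using \eq{gamma 2}, which is also consistent with the paper's \eq{new K 1} and \eq{pij 1}. No gap beyond the level of detail the paper itself accepts.
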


Since $\widehat{F}_{e,i}^{(\vc{\theta})}(0) = \widehat{F}_{s,i}^{(\vc{\theta})}(0) = 1$ by \eq{xi 3}, \eq{light tail 1} and \eq{gamma 1}, $F_{e,i}^{(\vc{\theta})}$ and $F_{s,j}^{(\vc{\theta})}$ are proper distribution functions. Let
\begin{align*}
  \lambda^{(\vc{\theta})}_{i} = (\widetilde{\dd{E}}_{e,i}^{(\vc{\theta})}(T_{e,i}))^{-1}, \qquad \mu^{(\vc{\theta})}_{i} = (\widetilde{\dd{E}}^{(\vc{\theta})}_{s,i}(T_{s,i}))^{-1},
\end{align*}
where $\widetilde{\dd{E}}^{(\vc{\theta})}_{e,i}$ and $\widetilde{\dd{E}}^{(\vc{\theta})}_{s,i}$ represent the conditional expectations under $\widetilde{\dd{E}}^{(\vc{\theta})}_{\vc{x}}$ just before time when external arrivals and service completion, respectively, at station $i$ occur. Then, by \lem{new T 1}, we have
\begin{align}
\label{eq:new mean 1}
 & \lambda^{(\vc{\theta})}_{i} =
  (e^{\theta_{i}} \widehat{F}_{e,i}'(- \gamma_{e,i}(\theta_{i})))^{-1}, \qquad i \in \sr{E} \\
\label{eq:new mean 2}
 & \mu^{(\vc{\theta})}_{i} =
  (q_{i}(\vc{\theta}) \widehat{F}_{s,i}'(- \gamma_{s,i}(\theta_{i})))^{-1}, \qquad  i \in \sr{J} .
\end{align}

The jump kernel $\sr{K}$ is changed to $\widetilde{\sr{K}}^{(\vc{\theta})}$ as
\begin{align}
\label{eq:new K 1}
  \widetilde{\sr{K}}^{(\vc{\theta})} & 1_{B_{\ell} \times B_{e} \times B_{s}}(\vc{x}) \nonumber\\
  & = \left\{
\begin{array}{ll}
 \widetilde{\dd{P}}_{e,i}^{(\vc{\theta})}(\vc{z}+\vc{e}_{i} \in B_{\ell}, \vc{y}_{e} + T_{e,i} \vc{e}_{i} \in B_{e}, \vc{y}_{s} \in B_{s}), & y_{e,i} = 0,  \\
 \widetilde{\dd{P}}_{s,i}^{(\vc{\theta})}(\vc{z}-\vc{e}_{i} + \vc{e}_{j} \in B_{\ell}, \vc{y}_{e} \in B_{e}, \vc{y}_{s} + T_{s,i} \vc{e}_{i} \in B_{s}) , & y_{s,i} = 0, 
\end{array}
\right.
\end{align}
where $\theta_{0} = 0$. Hence, the routing probability from station $i$ to $j$ under $\widetilde{\dd{P}}^{(\vc{\theta})}_{\vc{x}}$ is
\begin{align}
\label{eq:pij 1}
  p^{(\vc{\theta})}_{ij} \equiv e^{-\theta_{i} + \theta_{j}} p_{ij}/q_{i}(\vc{\theta}).
\end{align}

Thus, the GJN (generalized Jackson network) keeps the same network structure under the new probability measure $\widetilde{\dd{P}}^{(\vc{\theta})}_{\vc{x}}$, but their modeling primitives, $F_{e,i}$, $F_{s,j}$ and $p_{ij}$ are changed to $F_{e,i}^{(\vc{\theta})}$, $F_{s,j}^{(\vc{\theta})}$ and $p^{(\vc{\theta})}_{ij}$, respectively, which do not depend on the initial state $\vc{x}$. Let
\begin{align*}
  q_{i}^{(\vc{\theta})}(\vc{\eta}) = e^{-\eta_{i}} \Bigg(\sum_{j \in \sr{J}} p^{(\vc{\theta})}_{ij} e^{\eta_{j}} + p^{(\vc{\theta})}_{i0}\Bigg), \qquad i \in \sr{J},
\end{align*}
which is $q_{i}(\vc{\eta})$ under $\widetilde{\dd{P}}^{(\vc{\theta})}_{\vc{x}}$, where $\vc{\eta} \in \dd{R}^{d}$ is a variable. From this definition and \eq{pij 1}, we have
\begin{align*}
  q^{(\vc{\theta})}_{i}(\vc{\eta}) = \frac {q_{i}(\vc{\eta}+\vc{\theta})}{q_{i}(\vc{\theta})} .
\end{align*}

Similarly to the original network model, we define $\gamma^{(\vc{\theta})}_{e,i}(\eta_{i})$, $\gamma^{(\vc{\theta})}_{s,j}(\vc{\eta})$ as the unique solutions of the following equations.
\begin{align*}
  e^{\eta_{i}} \widehat{F}^{(\vc{\theta})}_{e,i}(-\gamma^{(\vc{\theta})}_{e,i}(\eta_{i})) = 1, \quad i \in \sr{E}, \quad q_{i}^{(\vc{\theta})}(\vc{\eta}) \widehat{F}^{(\vc{\theta})}_{s,i}(-\gamma^{(\vc{\theta})}_{s,i}(\vc{\eta})) = 1, \quad i \in \sr{J},
\end{align*}
for $v \in (0,\infty]$. These definitions yield
\begin{align*}
 & \gamma^{(\vc{\theta})}_{e,i}(\eta_{i}) = \gamma_{e,i}(\eta_{i}+\theta_{i}) - \gamma_{e,i}(\theta_{i}),\\
 & \gamma^{(\vc{\theta})}_{s,j}(\vc{\eta}) = \gamma_{s,j}(\vc{\eta}+\vc{\theta}) - \gamma_{s,j}(\vc{\theta}),
\end{align*}
and define $\gamma^{(\vc{\theta})}(\vc{\eta})$ as
\begin{align*}
  \gamma^{(\vc{\theta})}(\vc{\eta})  = \sum_{i \in \sr{E}} \gamma^{(\vc{\theta})}_{e,i}(\eta_{i}) + \sum_{i \in \sr{J}} \gamma^{(\vc{\theta})}_{s,i}(\vc{\eta}) = {\gamma}(\vc{\eta}+\vc{\theta}) - {\gamma}(\vc{\theta}).
\end{align*}

We immediately see from these formulas that
\begin{align}
\label{eq:new gradient 1}
  \nabla \gamma^{(\vc{\theta})}_{s,i}(0) = \nabla \gamma_{s,i}(\vc{\theta}), \qquad \nabla \gamma^{(\vc{\theta})}(0) = \nabla {\gamma}(\vc{\theta}).
\end{align}
Similarly to \eq{gradient 2} and \eq{gamma 3}, we have
\begin{align}
\label{eq:new gradient 2}
 & \nabla \gamma^{(\vc{\theta})}(\vc{0}) = \vc{\lambda}^{(\vc{\theta})} - \vc{\mu}^{(\vc{\theta})} (I - P^{(\vc{\theta})})\\
\label{eq:new gradient 3}
 &  \nabla \gamma^{(\vc{\theta})}_{s,j}(\vc{0}) = \mu^{(\vc{\theta})}_{j}((p^{(\vc{\theta})}_{j1}, p^{(\vc{\theta})}_{j2}, \ldots, p^{(\vc{\theta})}_{jd}) - \vcn{e}_{j}), \qquad j \in \sr{J}.
\end{align}
Hence, we can update Lemmas \lemt{unstable 1} and \lemt{geometric 1} in the exactly same way for the network model under $\widetilde{\dd{P}}^{(\vc{\theta})}_{\vc{x}}$.

The following lemma is almost immediate from \eq{pij 1} and \eq{new gradient 3}, but will be useful to check the conditions in \lem{geometric 1}. Similar to $\vc{t}_{i}$ of \eq{t vector 1}, we define $\vc{t}^{(\vc{\theta})}_{i} \in \dd{R}^{d}$ by
\begin{align*}
  \br{\nabla \gamma_{s,j}(\vc{\theta}), \vc{t}^{(\vc{\theta})}_{i}} = 0, \quad j \ne i, \qquad \br{\nabla \gamma_{s,i}(\vc{\theta}), \vc{t}^{(\vc{\theta})}_{i}} > 0.
\end{align*}
Hence, similar to \lem{t 1}, we have the following lemma.

\begin{lemma}
\label{lem:geometric sign 1}
Let $T^{(\vc{\theta})}$ be the matrix whose $i$-th column is $\vc{t}^{(\vc{\theta})}_{i}$, then $T^{(\vc{\theta})}$ is non-singular and not positive, that is, $\vc{t}^{(\vc{\theta})}_{i} \le \vc{0}$ with $t^{(\vc{\theta})}_{ii} < 0$ for all $i \in \sr{J}$. 
\end{lemma}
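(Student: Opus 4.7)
The plan is to mimic the proof of \lem{t 1} verbatim, with the roles of $P$, $\mu_i$ played by their twisted counterparts $P^{(\vc{\theta})}$ and $\mu^{(\vc{\theta})}_i$, and to verify that the structural properties invoked there (strict substochasticity, irreducibility, nonnegativity of the inverse) survive the exponential change of measure.

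First I would unpack the defining equations of $\vc{t}^{(\vc{\theta})}_{i}$ using the identities in \eq{new gradient 1} and \eq{new gradient 3}: the equation $\br{\nabla \gamma_{s,j}(\vc{\theta}), \vc{t}^{(\vc{\theta})}_{i}} = 0$ for $j \ne i$ is exactly $\br{\nabla \gamma^{(\vc{\theta})}_{s,j}(\vc{0}), \vc{t}^{(\vc{\theta})}_{i}} = 0$, and by \eq{new gradient 3} the row vector $\nabla \gamma^{(\vc{\theta})}_{s,j}(\vc{0})$ equals $\mu^{(\vc{\theta})}_{j}((p^{(\vc{\theta})}_{j1},\ldots,p^{(\vc{\theta})}_{jd}) - \vcn{e}_{j})$. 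Collecting these into a single matrix equation gives
\begin{align*}
  (P^{(\vc{\theta})} - I)\, T^{(\vc{\theta})} = \Delta_{\vc{a}^{(\vc{\theta})}}
\end{align*}
for some positive diagonal matrix $\Delta_{\vc{a}^{(\vc{\theta})}}$, where the positive entries $a^{(\vc{\theta})}_{i}$ come from the strict inequality $\br{\nabla \gamma^{(\vc{\theta})}_{s,i}(\vc{0}), \vc{t}^{(\vc{\theta})}_{i}} > 0$ (after absorbing the factor $\mu^{(\vc{\theta})}_{i}>0$). This gives the representation
\begin{align*}
  T^{(\vc{\theta})} = -(I - P^{(\vc{\theta})})^{-1}\Delta_{\vc{a}^{(\vc{\theta})}},
\end{align*}
which is the analog of \eq{t 1}.

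Next I need to verify that $P^{(\vc{\theta})}$ is strictly substochastic and that the associated over all routing matrix $\overline{P}^{(\vc{\theta})}$ is irreducible, so that $(I - P^{(\vc{\theta})})^{-1}$ exists and is entrywise nonnegative with strictly positive diagonal. Stochasticity of the augmented row is a direct calculation from \eq{pij 1}: $\sum_{j\in\sr{J}} p^{(\vc{\theta})}_{ij} + p^{(\vc{\theta})}_{i0} = e^{-\theta_{i}}(\sum_{j}p_{ij}e^{\theta_{j}} + p_{i0})/q_{i}(\vc{\theta}) = 1$. Strict substochasticity of $P^{(\vc{\theta})}$ and irreducibility of $\overline{P}^{(\vc{\theta})}$ follow because $p^{(\vc{\theta})}_{ij}>0$ if and only if $p_{ij}>0$, so the zero/positive pattern of $\overline{P}$ is preserved under the twist.

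Combining these two pieces gives the conclusion: $(I - P^{(\vc{\theta})})^{-1}$ is a nonnegative matrix with strictly positive diagonal entries, $\Delta_{\vc{a}^{(\vc{\theta})}}$ is a positive diagonal matrix, so $T^{(\vc{\theta})}$ is nonpositive with strictly negative diagonal, i.e.\ $\vc{t}^{(\vc{\theta})}_{i} \le \vc{0}$ and $t^{(\vc{\theta})}_{ii} < 0$; nonsingularity is immediate from the nonsingularity of the two factors on the right-hand side. No step is genuinely hard; the only place a reader might pause is the preservation of the zero pattern under the twist, so I would state and briefly justify that explicitly rather than appeal by analogy.
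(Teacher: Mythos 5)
Your proposal is correct and follows essentially the same route as the paper, which simply declares the lemma ``almost immediate'' from \eq{pij 1} and \eq{new gradient 3} by analogy with \lem{t 1}: via \eq{new gradient 1} and \eq{new gradient 3} one gets $T^{(\vc{\theta})} = -(I-P^{(\vc{\theta})})^{-1}\Delta_{\vc{a}^{(\vc{\theta})}}$, exactly as you wrote. Your explicit check that the twist preserves row sums, the zero/positive pattern, and hence strict substochasticity of $P^{(\vc{\theta})}$ and irreducibility of $\ol{P}^{(\vc{\theta})}$ is precisely the detail the paper leaves implicit, so the argument is sound.
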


\section{Proofs}
\label{sect:proofs}
\setnewcounter

The goal of this section is to prove the theorems and their corollary. A main idea is to use the new measure introduced in \sect{new} by appropriately choosing the parameter $\vc{\theta}$. Some of its arguments are parallel to those in Section 4 of \cite{Miya2017}, but we require more lemmas because of the state space for the queue lengths is multidimensional. We start to represent the stationary tail probability under the new measure.

\subsection{A procedure for deriving tail asymptotics}
\label{sect:method}

Recall the notation $w(\vc{\theta},\vc{y})$, and, for $\vc{R}(t) = (\vc{R}_{e}(t), \vc{R}_{s}(t))$, 
\begin{align*}
  w(\vc{\theta},\vc{R}(t)) = \br{\vc{\gamma}_{e}(\vc{\theta}), \vc{R}_{e} (t)} + \br{\vc{\gamma}_{s}(\vc{\theta}), \vc{R}_{s}(t)}.
\end{align*}
Then, it follows from \eq{change 2} and \eq{Ef 1} that, for a given initial distribution $\nu$,
\begin{align}
\label{eq:exp 1}
  d\dd{P}_{\nu} & = (E^{f_{\vc{\theta}}}(t))^{-1} d\widetilde{\dd{P}}^{(\vc{\theta})}_{\nu}\nonumber\\
  &= f_{\vc{\theta}}(X(0)) e^{-\br{\vc{\theta}, \vc{L}(t)} + w(\vc{\theta},\vc{R}(t)) + {\gamma}(\vc{\theta}) t - \sum_{i \in \sr{J}} \gamma_{s,i}(\vc{\theta}) \int_{0}^{t} 1(L_{i}(u) = 0) du } d\widetilde{\dd{P}}^{(\vc{\theta})}_{\nu}, \quad \mbox{on } \sr{F}_{t},
\end{align}
where we recall that $f_{\vc{\theta}}(X(0)) = e^{\br{\vc{\theta}, \vc{L}(0)} - w(\vc{\theta},\vc{R}(t))}$.

We take the initial distribution $\nu$ in the following way. Let $S_{1} = \dd{Z}_{+}^{d}$, and let $\tau^{\rs{ex}}_{A}, \tau^{\rs{re}}_{A}$ be the first exit from and return times of $L(t)$ to $\partial_{A} S_{1}$ such that $\tau^{\rs{ex}}_{A} < \tau^{\rs{re}}_{A}$, where
\begin{align*}
  \partial_{A} S_{1} = \cup_{i \in A} \{\vc{z} \in S_{1}; z_{i} = 0\}.
\end{align*}
Let $\nu^{-}_{A}$ the distribution of $X(\tau^{\rs{ex}}_{A}-)$ given that $X(0)$ is subject to the normalized stationary distribution limited on $\partial_{A} S_{1}$. This $\nu^{-}_{A}$ is taken for $\nu$ in \eq{exp 1}. Denote a random vector subject to the stationary distribution of $X(t)$ by $X \equiv (\vc{L}, \vc{R}_{e}, \vc{R}_{s})$. Then, the cycle formula yields, for $x > 0$ and $B(x) \subset S_{1} \setminus \partial_{A} S_{1}$,
\begin{align}
\label{eq:cycle 1}
  \dd{P}(\vc{L} \in B(x)) = b(A) \dd{E}_{\nu^{-}_{A}}\Big( \int_{0}^{\tau^{\rs{re}}_{A}} 1(\vc{L}(u) \in B(x)) du \Big),
\end{align}
where $b(A) = \dd{P}(\vc{L} \in S_{1} \setminus \partial_{A} S_{1}) / \dd{E}_{\nu^{-}_{A}}(\tau^{\rs{re}}_{A} - \tau^{\rs{ex}}_{A})$. We here are interested in the asymptotic of $\dd{P}(\vc{L} \in B(x))$ as $x \to \infty$.

We apply change of measure to \eq{cycle 1}. For this, let $\tau_{x}$ be a stopping time such that
\begin{align}
\label{eq:stopping 1}
  \tau_{x} \le \inf\{t \ge 0; \vc{L}(t) \in B(x)\}, \qquad x > 0,
\end{align}
which is a crucial condition in our approach. Let
\begin{align*}
 & Y(t) = \dd{E}_{\nu^{-}_{A}} \Big(\int_{t}^{\tau^{\rs{re}}_{A}} 1(\vc{L}(u) \in B(x)) du \Big|\sr{F}_{t} \Big), 
\end{align*}
then it follows from \eq{exp 1} with $\nu = \nu^{-}_{A}$ that
\begin{align}
\label{eq:exp 2}
 \dd{P} (\vc{L} \in B(x)) = b(A) & \widetilde{\dd{E}}^{(\vc{\theta})}_{\nu^{-}_{A}} \Bigg[ f_{\vc{\theta}}(X(0)) Y(\tau_{x}) 1(\tau_{x} < \infty) e^{- \br{\vc{\theta}, \vc{L}(\tau_{x})} + w(\vc{\theta},\vc{R}(\tau_{x}))}\nonumber\\
  & \times \exp\Bigg({\gamma}(\vc{\theta}) \tau_{x} - \sum_{i \in \sr{J} \setminus A} \gamma_{s,i}(\vc{\theta}) \int_{0}^{\tau_{x}} 1(L_{i}(u) = 0) du\Bigg) \Bigg].
\end{align}

We are now ready to consider the asymptotic of $\dd{P} (\vc{L} \in B(x))$ as $x \to \infty$. For its upper bound, we take the following steps.

\renewcommand{\labelenumi}{(\roman{enumi})}
\begin{enumerate}
\item [1)] Choose $\vc{\theta} \in \Gamma^{\rs{in}}_{A}$, which implies that
\begin{align*}
  \exp\left( {\gamma}(\vc{\theta}) \tau_{x} - \sum_{i \in \sr{J} \setminus A} \gamma_{s,i}(\vc{\theta}) \int_{0}^{\tau_{x}} 1(L_{i}(u) = 0) du\right) \le 1.
\end{align*}
\item [2)] Verify that there is a constant $C_{1}$ such that, if $\tau_{x} < \infty$, then
\begin{align}
\label{eq:Y 1}
  Y(\tau_{x}) e^{w(\vc{\theta},\vc{R}(\tau_{x}))} < C_{1}.
\end{align}
\item [3)] Verify that $\widetilde{\dd{E}}^{(\vc{\theta})}_{\nu^{-}_{A}} (f_{\vc{\theta}}(X(0))$ is finite if $\varphi_{i}(\vc{\theta}) < \infty$ for all $i \in A$.
\item [4)] Find finite real-valued functions $\ol{a}_{0}(\vc{\theta})$ and $\ol{a}_{1}(\vc{\theta}) > 0$ such that
\begin{align}
\label{eq:bound 2}
 \ol{a}_{1}(\vc{\theta})x - \br{\vc{\theta},\vc{L}(\tau_{x})} \le \ol{a}_{0}(\vc{\theta}),
\end{align}
then $e^{- \br{\vc{\theta}, \vc{L}(\tau_{x})}}$ is bounded above by $e^{\ol{a}_{0}(\vc{\theta})-\ol{a}_{1}(\vc{\theta})x}$.
\item [5)] Derive an inequality from \eq{exp 2} using 1)--4), divide both sides of this inequality by $x$, and let $x \to \infty$, then take the infimum of the upper bound on $\vc{\theta}$ for which steps 1)--4) work well.
\end{enumerate}

To derive the lower bounds, we modify \eq{exp 2} by replacing $M_{\vc{\theta}}$ by the martingale $M_{\vc{J}(v), \vc{\theta}}$ of \eq{martingale 4} in \lem{martingale 2} choosing the index set for truncation, $\vc{J}(v) \equiv (J_{e}(v), J_{s}(v))$ for each fixed $\vc{\theta} \in \dd{R}^{d}$, as
\begin{align}
\label{eq:J negative 1}
  J_{e}(v) = \{i \in \sr{E}; \gamma_{e,i}(\theta_{i}) < 0\}, \qquad J_{s}(v) = \{i \in \sr{E}; \gamma_{s,i}(\vc{\theta}) < 0\},
\end{align}
and we choose a sufficiently large $v$ such that $\gamma_{e,i}(v,\theta_{i}) < 0$ for all $i \in J_{e}(v)$ and $\gamma_{s,j}(v,\vc{\theta}) < 0$ for all $j \in J_{s}(v)$, which is possible by \lem{concave 1} and the assumption \eq{light tail 1}. Then, $w_{\vc{J}(v)}(\vc{\theta},\vc{y})$ of \eq{w J 1} is bounded below for all $\vc{y} = (\vc{y}_{e}, \vc{y}_{s}) \in \dd{R}^{\sr{E}} \times \dd{R}^{d}$. Namely,
\begin{align*}
  w_{\vc{J}(v)}(\vc{\theta},\vc{y}) \ge v \Bigg(\sum_{i \in J_{e}(v)} \gamma_{e,i}(v,\theta_{i}) + \sum_{i \in J_{s}(v)} \gamma_{s,i}(v,\vc{\theta}) \Bigg) > -\infty.
\end{align*}

 Then, \eq{exp 2} is changed as
\begin{align}
\label{eq:exp 3}
 \dd{P} & (\vc{L} \in B(x)) = b_{\vc{J}_{v}}(A) \widetilde{\dd{E}}^{(\vc{J}(v),\vc{\theta})}_{\nu^{-}_{A}} \Bigg[ f_{\vc{J}(v),\vc{\theta}}(X(0)) Y(\tau_{x}) 1(\tau_{x} < \infty) e^{- \br{\vc{\theta}, \vc{L}(\tau_{x})} }\nonumber\\
  & \times \exp\Bigg(w_{\vc{J}(v)}(\vc{\theta},\vc{R}(\tau_{x})) + {\gamma}(\vc{\theta}) \tau_{x} - \sum_{i \in \vc{J}_{e}(v)} \int_{0}^{\tau_{x}} \gamma_{e,i}(v,\theta_{i}) 1(R_{e,i}(u) > v) du \nonumber\\
  & -  \sum_{i \in \vc{J}_{s}(v)} \int_{0}^{\tau_{x}} \gamma_{s,i}(v,\vc{\theta}) 1(R_{s,i}(u) > v) du -  \sum_{i \in \sr{J} \setminus A} \int_{0}^{\tau_{x}} \gamma_{J_{s}(v),i}(\vc{\theta}) 1(L_{i}(u) = 0) du\Bigg) \Bigg],
\end{align}
where $b_{\vc{J}_{v}}(A)$ is the normalizing constant and
\begin{align*}
  \gamma_{J_{s}(v),i}(\vc{\theta}) = \gamma_{s,i}(v,\vc{\theta}) 1(i \in J_{s}(v)) + \gamma_{s,i}(\vc{\theta}) 1(i \ne J_{s}(v)).
\end{align*}
Note that the first integration term with minus sign in the exponent of \eq{exp 3} is bounded below by $0$ by the choice of $\vc{J}(v)$. We now take the following steps for the lower bounds.

\begin{enumerate}
\setcounter{enumi}{5}
\item [1')] Choose $\vc{\theta} \in \Gamma^{\rs{out}}$ such that $\gamma_{s,i}(\vc{\theta}) < 0$ for all $i \in \sr{J} \setminus A$, which implies $i \in J_{s}(v)$ and, for sufficiently large $v > 0$, 
\begin{align*}
 \exp\Bigg( {\gamma}(\vc{\theta}) \tau_{x} & - \sum_{i \in \sr{J} \setminus A} \int_{0}^{\tau_{x}} \gamma_{s,i}(v,\vc{\theta}) 1(L_{i}(u) = 0) du\Bigg) \ge 1.
\end{align*}
The lower bounds are only used for \thr{decay 2}. Thus, $A = \sr{J}$ for general $d$ and $A = \{k\}$ for $d = 2$.
\item [2')] Verify that there is a constant $C_{2}$ such that, if $\tau_{x} < \infty$, then
\begin{align}
\label{eq:Y 2}
  Y(\tau_{x}) > C_{2}.
\end{align}
\item [3')] Find finite valued functions $\ul{a}_{0}(\vc{\theta}), \ul{a}_{1}(\vc{\theta})$ such that
\begin{align}
\label{eq:bound 3}
 \ul{a}_{1}(\vc{\theta})x - \br{\vc{\theta},\vc{L}(\tau_{x})} \ge \ul{a}_{0}(\vc{\theta}),
\end{align}
then $e^{- \br{\vc{\theta}, \vc{L}(\tau_{x})}}$ is bounded below by $e^{\ul{a}_{0}(\vc{\theta})-\ul{a}_{1}(\vc{\theta})x}$.
\item [4')] Find a subset $U$ of $\partial_{A} S$ such that
\begin{align}
\label{eq:hitting 1}
 & \liminf_{x \to \infty} \widetilde{\dd{P}}^{(\vc{\theta})}_{\nu^{-}_{A}}(X(0) \in U, \tau_{x} < \infty) > 0,\\
\label{eq:U 1}
 & \widetilde{\dd{E}}^{(\vc{\theta})}_{\nu^{-}_{A}} (1(X(0) \in U) f_{\vc{\theta}}(X(0)) < \infty.
\end{align}
\item [5')] The final step is similar to 5) of the upper bound.
\end{enumerate}

In this procedure, we first need to find appropriate $B(x)$ and $\tau_{x}$ so that \eq{bound 3} and \eq{hitting 1} hold, then go through steps. Among them, \eq{hitting 1} is technically most demanding.  

\subsection{Lemmas for tail asymptotics}
\label{sect:technical}

For an open set or closed $B \subset \dd{R}_{+}^{d}$, we define $\tau^{\rs{in}}_{B}$ as
\begin{align*}
  \tau^{\rs{in}}_{B} \equiv \inf\{t > 0; \vc{L}(t) \in B\}.
\end{align*}
This notation will be used in lemmas below.

\begin{lemma}
\label{lem:Y 1}
For each $A \subset \sr{J}$, $x > 0$ and $B(x) \subset S_{1} \setminus \partial_{A} S_{1}$, let $ \tau_{x} = \tau^{\rs{in}}_{B(x)}$. If there is a positive constant $c_{0}$ to be independent of $x$ such that
\begin{align}
\label{eq:bounded 1}
  \sup\{|\br{\vc{\theta}, (\vc{z} - \vc{z}')}|; \vc{z}, \vc{z}' \in B(x)\} < c_{0}\|\theta\|, \qquad \vc{\theta} \in \dd{R}^{d},
\end{align}
then \eq{Y 1} holds for some $C_{1} > 0$, which is independent of $x$.
\end{lemma}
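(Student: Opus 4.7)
The strategy is to use condition \eq{bounded 1} to majorize the indicator $1_{B(x)}(\vc{L}(u))$ in the integral defining $Y(\tau_x)$ by an exponential in $\br{\vc{\theta}, \vc{L}(u) - \vc{L}(\tau_x)}$, and then to invoke the martingale $M_{\vc{\theta}}$ from \lem{martingale 2} together with optional stopping at $\tau^{\rs{re}}_A$ to control the resulting expression.

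Since $\tau_x = \tau^{\rs{in}}_{B(x)}$ we have $\vc{L}(\tau_x) \in B(x)$, so whenever $\vc{L}(u) \in B(x)$ condition \eq{bounded 1} yields $\br{\vc{\theta}, \vc{L}(u) - \vc{L}(\tau_x)} \ge -c_0 \|\vc{\theta}\|$, giving the unconditional pointwise bound
\[
1_{B(x)}(\vc{L}(u)) \le e^{c_0 \|\vc{\theta}\|} e^{\br{\vc{\theta}, \vc{L}(u) - \vc{L}(\tau_x)}}.
\]
Substituting into the definition of $Y(\tau_x)$ and using the identity $e^{\br{\vc{\theta}, \vc{L}(u)}} = f_{\vc{\theta}}(X(u)) e^{w(\vc{\theta}, \vc{R}(u))}$, I multiply by $e^{w(\vc{\theta}, \vc{R}(\tau_x))}$ and obtain
\[
Y(\tau_x) e^{w(\vc{\theta}, \vc{R}(\tau_x))} \le \frac{e^{c_0 \|\vc{\theta}\|}}{f_{\vc{\theta}}(X(\tau_x))} \, \dd{E}\!\left[ \int_{\tau_x}^{\tau^{\rs{re}}_A} f_{\vc{\theta}}(X(u)) e^{w(\vc{\theta}, \vc{R}(u))} du \,\Big|\, \sr{F}_{\tau_x} \right].
\]
Next, applying optional stopping to $M_{\vc{\theta}}$ on $[\tau_x, \tau^{\rs{re}}_A]$ and using $\sr{A} f_{\vc{\theta}}/f_{\vc{\theta}} = \gamma(\vc{\theta}) - \sum_i \gamma_{s,i}(\vc{\theta}) 1(L_i=0)$, where on this interval only $i \in \sr{J} \setminus A$ contribute to the sum (with $\gamma_{s,i}(\vc{\theta}) > 0$ for $\vc{\theta} \in \Gamma^{\rs{in}}_A$), the effective drift factor is dominated by $\gamma(\vc{\theta}) < 0$, yielding
\[
\dd{E}\!\left[ \int_{\tau_x}^{\tau^{\rs{re}}_A} f_{\vc{\theta}}(X(u)) du \,\Big|\, \sr{F}_{\tau_x} \right] \le \frac{f_{\vc{\theta}}(X(\tau_x))}{-\gamma(\vc{\theta})}.
\]

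The main obstacle is absorbing the extra factor $e^{w(\vc{\theta}, \vc{R}(u))}$ that distinguishes the integrand appearing in the second display from the clean martingale estimate in the third. The crucial observation for handling it is that the product $f_{\vc{\theta}}(X(u)) e^{w(\vc{\theta}, \vc{R}(u))} = e^{\br{\vc{\theta}, \vc{L}(u)}}$ is a function of $\vc{L}(u)$ alone, hence piecewise constant, changing only at jump instants, at which the terminal condition \eq{terminal 1} of \lem{terminal 1} balances the jump in conditional expectation. I therefore expect to decompose the integral along inter-jump intervals and use the explicit semi-martingale representation \eq{martingale 4}, together with the pointwise bound $e^{\br{\vc{\theta}, \vc{L}(u)}} \le e^{c_0 \|\vc{\theta}\|} e^{\br{\vc{\theta}, \vc{L}(\tau_x)}}$ obtained again from \eq{bounded 1} whenever $\vc{L}(u) \in B(x)$, to recover a deterministic bound depending only on $c_0$, $\vc{\theta}$, and $-\gamma(\vc{\theta})$ but independent of $x$. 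This closing step is the delicate one: it is where condition \eq{bounded 1} must interact both with the drift bound coming from $\vc{\theta} \in \Gamma^{\rs{in}}_A$ and with the boundary (terminal) structure built into $f_{\vc{\theta}}$.
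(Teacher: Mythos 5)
Your reduction cannot be closed in the form you set it up, and the plan you sketch for the ``delicate closing step'' rests on a misuse of the terminal condition. For your second display to give a bound independent of $x$ and of the state at $\tau_{x}$, you need $\dd{E}\big[\int_{\tau_{x}}^{\tau^{\rs{re}}_{A}} e^{\br{\vc{\theta},\vc{L}(u)}}du \,\big|\, \sr{F}_{\tau_{x}}\big] \le C\, f_{\vc{\theta}}(X(\tau_{x})) = C\, e^{\br{\vc{\theta},\vc{L}(\tau_{x})}-w(\vc{\theta},\vc{R}(\tau_{x}))}$. This fails in general: for $\vc{\theta}\in\Gamma^{\rs{in}}_{A}$ the coefficients $\gamma_{s,i}(\vc{\theta})$, $i\in\sr{J}\setminus A$, are strictly positive and the residual times are unbounded, so on the $\sr{F}_{\tau_{x}}$-measurable event that every clock at $\tau_{x}$ exceeds $1$ while some clock with a positive coefficient is huge, the left side is at least $e^{\br{\vc{\theta},\vc{L}(\tau_{x})}}$ (the queue length is frozen until the next jump, which is at least one time unit away, and $\tau^{\rs{re}}_{A}$ cannot occur before a jump since $B(x)\cap\partial_{A}S_{1}=\emptyset$), whereas the right side carries the arbitrarily small factor $e^{-w(\vc{\theta},\vc{R}(\tau_{x}))}$. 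Your ``clean'' optional-stopping estimate controls $\dd{E}\int f_{\vc{\theta}}(X(u))du$, i.e.\ the integrand with the compensating factor $e^{-w(\vc{\theta},\vc{R}(u))}$ built in; the bridge you propose, that $e^{\br{\vc{\theta},\vc{L}(u)}}$ is piecewise constant with jumps ``balanced by the terminal condition,'' is not available, because \eq{terminal 1} holds for $f_{\vc{\theta}}$ precisely on account of the residual-time factor (that is how $\gamma_{e,i},\gamma_{s,i}$ are defined in \eq{gamma 2}); the function $e^{\br{\vc{\theta},\vc{z}}}$ alone does not satisfy it, its conditional jump mean being multiplied by $e^{\theta_{i}}$ at arrivals and by $q_{i}(\vc{\theta})$ at departures. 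Restoring the indicator so as to use \eq{bounded 1} again, as you suggest at the end, just returns you to bounding $e^{w(\vc{\theta},\vc{R}(\tau_{x}))}$ times the expected occupation time, which is the original problem; the argument is circular at the decisive point.

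The paper's proof handles exactly this difficulty by different means, none of which appear in your sketch. It dominates $\vc{L}$ by the process $\vc{H}$ obtained by removing the reflection on $\partial_{A}S_{1}$, writes the conditional occupation time by the layer-cake formula as $\int_{0}^{\infty}\dd{E}\big(\cdot\,1(\tau_{0}(t)<\infty)\,\big|\,\sr{F}_{\tau_{x}}\big)dt$, where $\tau_{0}(t)$ is the first visit of $\vc{H}$ to $B(x)$ after $\tau_{x}+t$, and estimates each term by a change of measure built from the truncated test function $f_{\vc{J}(v),\vc{\theta}}$, with $J_{e}(v),J_{s}(v)$ taken to be the indices where $\gamma_{e,i}(\theta_{i})>0$ and $\gamma_{s,i}(\vc{\theta})>0$. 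The truncation is the key device you are missing: in the conditional change-of-measure identity \eq{change 3} the residual-time weight at $\tau_{x}$ cancels against the density and re-emerges as $w_{\vc{J}(v)}(\vc{\theta},\vc{R}(\tau_{0}(t)))$, which is bounded above by $v$ times the sum of the truncated (positive) coefficients, while the drift factor $e^{\gamma_{\vc{J}(v)}(\vc{\theta})t}$, with $\gamma_{\vc{J}(v)}(\vc{\theta})<0$ for $v$ large because $\gamma(\vc{\theta})<0$, makes the $t$-integral converge; the condition \eq{bounded 1} enters only to bound $\br{\vc{\theta},\vc{H}(\tau_{x})}-\br{\vc{\theta},\vc{H}(\tau_{0}(t))}$, both points lying in $B(x)$. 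Your first step, replacing the indicator by an exponential over the whole excursion, discards the smallness of $B(x)$ that this geometric-in-$t$ return estimate exploits, and so forecloses the mechanism that actually proves \eq{Y 1}.
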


\begin{proof}
We follow the proving method of Lemma 4.6 of \cite{Miya2017}. We replace $\vc{L}(\cdot)$ by $\vc{H}(\cdot)$ such that $\vc{H}(\cdot)$ is obtained from $\vc{L}(\cdot)$ removing the reflecting boundary $\partial_{A} S_{1}$. Hence, the state space of $\vc{H}(\cdot)$ has no limitation concerning entries with indexes in $A$. For $t > 0$, let
\begin{align*}
  \tau_{0}(t) = \inf\{ u \ge t + \tau^{\rs{in}}_{B(x)}; \vc{H}(u) \in B(x)\},
\end{align*}
then, on $\{\tau^{\rs{in}}_{B(x)} < \infty\}$, 
\begin{align*}
  t & \le \int_{\tau^{\rs{in}}_{B(x)}}^{\tau^{\rs{re}}_{A}} 1(\vc{L}(u) \in B(x)) du \le 
\int_{\tau^{\rs{in}}_{B(x)}}^{\infty} 1(\vc{H}(u) \in B(x)) du
\end{align*}
implies that $t \le \tau_{0}(t) - \tau^{\rs{in}}_{B(x)} < \infty$. Hence, we have, on $\{\tau^{\rs{in}}_{B(x)} < \infty\}$,
\begin{align}
\label{eq:bound 1}
  Y(\tau^{\rs{in}}_{B(x)}) e^{w(\vc{\theta},\vc{R}(\tau^{\rs{in}}_{B(x)}))} & = \dd{E}_{\nu^{-}_{A}} \Big(\int_{\tau^{\rs{in}}_{B(x)}}^{\tau^{\rs{re}}_{A}} 1(\vc{H}(u) \in B(x)) du \, e^{w(\vc{\theta},\vc{R}(\tau^{\rs{in}}_{B(x)}))} \Big|\sr{F}_{\tau^{\rs{in}}_{B(x)}} \Big) \nonumber\\
  & \le \int_{0}^{\infty} \dd{E}_{\nu^{-}_{A}}(e^{w(\vc{\theta},\vc{R}(\tau^{\rs{in}}_{B(x)}))} 1(\tau_{0}(t) < \infty)|\sr{F}_{\tau^{\rs{in}}_{B(x)}}) dt.
\end{align}
We evaluate
\begin{align*}
  \dd{E}_{\nu^{-}_{A}}(e^{w(\vc{\theta},\vc{R}(\tau^{\rs{in}}_{B(x)}))} 1(\tau_{0}(t) < \infty)|\sr{F}_{\tau^{\rs{in}}_{B(x)}})
\end{align*}
using change of measure by $\vc{H}(\cdot)$ similar to $\vc{L}(\cdot)$. Let 
\begin{align*}
  J_{e}(v) = \{i \in \sr{E}; \gamma_{e,i}(\theta_{i}) > 0\}, \qquad J_{s}(v) = \{i \in \sr{J}; \gamma_{s,i}(\vc{\theta}) > 0\},
\end{align*}
and we choose a sufficiently large $v$ such that $\gamma_{e,i}(v,\theta_{i}) > 0$ for all $i \in J_{e}(v)$ and $\gamma_{s,j}(v,\vc{\theta}) > 0$ for all $j \in J_{s}(v)$, which is possible by the same reason as used for \eq{J negative 1}.

For change of measure, we use the test function $f_{\vc{J}(v),\vc{\theta}}$ of \eq{test 2} and the martingale $M_{\vc{J}(v),\vc{\theta}}$ of \eq{martingale 4}, where $\vc{L}(t)$ is replaced by $\vc{H}(t)$. Then, the exponential martingale $E^{f}(\cdot)$ is obtained as
\begin{align}
\label{eq:Ef 2}
 E^{f_{\vc{J}(v),\vc{\theta}}}(t) & = \frac {f_{\vc{J}(v),\vc{\theta}}(Y(t))}{f_{\vc{J}(v),\vc{\theta}}(Y(0))} e^{- \gamma_{\vc{J}(v)}(\vc{\theta}) t + \sum_{i \in \sr{J} \setminus A} \gamma_{s,i}(\vc{\theta}) \int_{0}^{t} 1(H_{i}(u) = 0) du } \nonumber\\
 & \qquad \times \exp\Big(\sum_{i \in J_{e}(v)} \gamma_{e,i}(v,\theta_{i}) \int_{0}^{t} 1(R_{e,i}(u) > v) du \nonumber\\
 & \hspace{15ex} + \sum_{i \in J_{s}(v)} \gamma_{s,i}(v,\theta_{i}) \int_{0}^{t} 1(R_{s,i}(u) > v) du \Big),
\end{align}
and define the new measure $\widetilde{\dd{P}}^{(\vc{J}(v),\vc{\theta})}_{\nu}$. Since $\gamma(\vc{\theta}) < 0$ for $\vc{\theta} \in \Gamma^{\rs{in}}_{A}$, there is a sufficient large $v$ such that $\gamma_{\vc{J}(v)}(\vc{\theta}) < 0$. We choose this $v$, then it follows from its conditional expectation version \eq{change 3} that
\begin{align}
\label{eq:tau 0}
  & \dd{E}_{\nu^{-}_{A}} (e^{w_{\vc{J}(v)}(\vc{\theta},\vc{R}(\tau^{\rs{in}}_{B(x)}))} 1(\tau_{0}(t) < \infty)|\sr{F}_{\tau^{\rs{in}}_{B(x)}}) \nonumber\\
 & = \widetilde{\dd{E}}^{(\vc{J}(v),\vc{\theta})}_{\nu^{-}_{A}} \Big(\frac{f_{\vc{J}(v),\vc{\theta}}(Y(\tau^{\rs{in}}_{B(x)}))} {f_{\vc{J}(v),\vc{\theta}}(Y(\tau_{0}(t))} e^{w_{\vc{J}(v)}(\vc{\theta},\vc{R}(\tau^{\rs{in}}_{B(x)}))+\gamma_{\vc{J}(v)}(\vc{\theta}) (\tau_{0}(t) - \tau^{\rs{in}}_{B(x)})} 1(\tau_{0}(t) < \infty) \nonumber\\
 & \qquad \times \exp\Big( - \sum_{i \in J_{e}(v)} \gamma_{e,i}(v,\theta_{i}) \int_{\tau^{\rs{in}}_{B(x)}}^{\tau_{0}(t)} 1(R_{e,i}(u) > v) du \nonumber\\
 & - \sum_{i \in J_{s}(v)} \gamma_{s,i}(v,\theta_{i}) \int_{\tau^{\rs{in}}_{B(x)}}^{\tau_{0}(t)} 1(R_{s,i}(u) > v) du - \sum_{i \in \sr{J} \setminus A} \gamma_{s,i}(\vc{\theta}) \int_{\tau^{\rs{in}}_{B(x)}}^{\tau_{0}(t)} 1(H_{i}(u) = 0) du \Big) \Big|\sr{F}_{\tau^{\rs{in}}_{B(x)}} \Big) \nonumber\\
 & \le \widetilde{\dd{E}}^{(\vc{J}(v),\vc{\theta})}_{\nu^{-}_{A}} \big(e^{-\br{\vc{\theta}, \vc{H}(\tau_{0}(t))} + \br{\vc{\theta}, \vc{H}(\tau^{\rs{in}}_{B(x)})}} \nonumber\\
 & \hspace{20ex} \times e^{w_{\vc{J}(v)}(\vc{\theta},\vc{R}(\tau_{0}(t))) + \gamma_{\vc{J}(v)}(\vc{\theta}) t} 1(\tau_{0}(t) < \infty) \big|\sr{F}_{\tau^{\rs{in}}_{B(x)}} \big) \nonumber\\
 & \le \widetilde{\dd{E}}^{(\vc{J}(v),\vc{\theta})}_{\nu^{-}_{A}} \big(e^{-\br{\vc{\theta}, \vc{H}(\tau_{0}(t))} + \br{\vc{\theta}, \vc{H}(\tau^{\rs{in}}_{B(x)})}}) e^{\sum_{i \in J_{e}(v)} \gamma_{e,i}(v,\theta_{i}) v + \sum_{i \in J_{s}(v)} \gamma_{s,i}(v,\theta_{i}) v + \gamma_{\vc{J}(v)}(\vc{\theta}) t},
\end{align}
since $\tau_{0}(t) - \tau^{\rs{in}}_{B(x)}) \ge t$ on $\{\tau_{0}(t) < \infty\}$ and non-truncated $\gamma_{e,i}(\vc{\theta})$ and $\gamma_{s,j}(\vc{\theta})$ are not positive. We here note that the condition \eq{bounded 1} implies that
\begin{align}
\label{eq:H 1}
  - \br{\vc{\theta}, \vc{H}(\tau_{0}(t))} + \br{\vc{\theta}, \vc{H}(\tau^{\rs{in}}_{B(x)})} \le \|\vc{\theta}\| c_{0},
\end{align}
from which the last term in \eq{tau 0} is bounded by
\begin{align*}  
 & c(v) e^{\gamma_{\vc{J}(v)}(\vc{\theta}) t}, \qquad t \ge 0, v > 0,
\end{align*}
where $c(v) = e^{c_{0} \|\vc{\theta}\| + \sum_{i \in J_{e}(v)} \gamma_{e,i}(v,\theta_{i}) v + \sum_{i \in J_{s}(v)} \gamma_{s,i}(v,\theta_{i}) v}$. Hence, the last term in \eq{bound 1} is bounded by
\begin{align*}
  \int_{0}^{\infty} c(v) e^{\gamma_{\vc{J}(v)}(\vc{\theta}) t} dt = - \frac 1{\gamma_{\vc{J}(v)}(\vc{\theta})} c(v) < \infty.
\end{align*}
This proves the lemma.
\end{proof}

In the proof of \lem{Y 1}, the condition \eq{bounded 1} may be weekend as long as \eq{H 1} holds. However, we also require the conditions \eq{stopping 1} and \eq{bound 2} for $\tau_{x} = \tau^{\rs{in}}_{B(x)}$ to get an upper bound. In the view of these conditions, \eq{bounded 1} is close to be necessary.

\begin{lemma}
\label{lem:finite 1}
We have that $\widetilde{\dd{E}}^{(\vc{\theta})}_{\nu^{-}_{A}} (f_{\vc{\theta}}(X(0)) < \infty$ for $A = \{k\}$ for each $k \in \sr{J}$ if $\varphi_{k}(\vc{\theta}) < \infty$ and $\vc{\theta} \in M_{k}$.
\end{lemma}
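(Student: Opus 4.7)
The plan is to show that the claim reduces to checking finiteness of an integral against the original measure. Since the Radon--Nikodym derivative $E^{f_{\vc{\theta}}}(0)=1$, the change of measure leaves $\sr{F}_0$-measurable quantities unchanged, giving
\begin{equation*}
\widetilde{\dd{E}}^{(\vc{\theta})}_{\nu^{-}_{\{k\}}}(f_{\vc{\theta}}(X(0)))
= \dd{E}_{\nu^{-}_{\{k\}}}(f_{\vc{\theta}}(X(0)))
= \int f_{\vc{\theta}}\, d\nu^{-}_{\{k\}},
\end{equation*}
so only the latter needs to be bounded. Note also that on the support of $\nu^{-}_{\{k\}}$ we have $L_k=0$, so $\br{\vc{\theta},\vc{L}(0)}=\sum_{i\ne k}\theta_i L_i(0)$ in the exponent of $f_{\vc{\theta}}(X(0))$.

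Next I would describe $\nu^{-}_{\{k\}}$ as the Palm distribution at the exit epochs of $\vc{L}(\cdot)$ from the face $\partial_{\{k\}} S_1=\{L_k=0\}$ under the stationary law of $X$. Exits occur precisely when a customer arrives at station $k$ while $L_k=0$, and these arrivals come in two types: (a) external arrivals to $k$ (only if $k\in\sr{E}$), forcing $R_{e,k}(\tau-)=0$; (b) routed completions from some $j$ with $p_{jk}>0$, forcing $R_{s,j}(\tau-)=0$. By the Campbell/Papangelou formula, the contribution of each exit type to $\int f_{\vc{\theta}}\,d\nu^{-}_{\{k\}}$ is proportional to an expectation of $f_{\vc{\theta}}(X(u-))$ integrated against the counting process of that arrival type restricted to $\{L_k=0\}$. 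Inside this expectation, one factor of $\vc{R}$ is forced to zero (the one triggering the jump); for the remaining residuals I integrate out using the defining identities $\widehat{F}_{e,i}(-\gamma_{e,i}(\theta_i))=e^{-\theta_i}$ and $\widehat{F}_{s,i}(-\gamma_{s,i}(\vc{\theta}))=q_i(\vc{\theta})^{-1}$ from \eq{gamma 2} together with standard inspection-paradox representations of the joint distribution of the remaining residuals with $\vc{L}$, all of which have finite MGFs in the relevant half-planes thanks to \eq{light tail 1}. In particular, the just-begun service $R_{s,k}$ has distribution $F_{s,k}$ (station $k$ has been idle up to $\tau-$), so $\dd{E}[e^{-\gamma_{s,k}(\vc{\theta})R_{s,k}}]=q_k(\vc{\theta})^{-1}$, and is handled cleanly.

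The sign condition $\vc{\theta}\in M_k$ is used exactly at this residual-integration step: when all $\theta_i\ge 0$ for $i\ne k$ the exponentials $e^{-\gamma_{e,i}(\theta_i)y}$ and $e^{-\gamma_{s,i}(\vc{\theta})y}$ (for $i\ne k$) are bounded by $1$ in $y$, while when all $\theta_i<0$ they blow up at infinity but remain integrable against the inspection distribution of the corresponding renewal residual by \eq{light tail 1}; the mixed-sign case is precisely what $M_k$ excludes. Once the $\vc{R}$-factors are integrated out, a constant (depending on $\vc{\theta}$ but not on $\vc{L}$) multiplies $\dd{E}[e^{\br{\vc{\theta},\vc{L}}}1(L_k=0)]=\varphi_k(\vc{\theta})$, which is finite by hypothesis, and summing over the two exit types and over the finitely many source stations $j$ with $p_{jk}>0$ yields the claim.

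The main obstacle I expect is the Palm-inversion bookkeeping for the routed-arrival case (b): conditioning on the joint event ``$R_{s,j}$ expires at $u$ \emph{and} the independent routing draw selects $k$'' requires disentangling the inspection distribution of the renewal sequence of $R_{s,j}$ from the routing randomness, and must be done so that after integrating out the other residuals one still sees $\dd{E}[e^{\br{\vc{\theta},\vc{L}}}1(L_k=0)]$ rather than something larger. This is essentially the multi-dimensional analogue of Lemma 4.5 of \cite{Miya2017} for a single server, adapted to the network routing structure, and mixed signs across coordinates other than $k$ would break the uniform $R$-bounds, which is exactly why $\vc{\theta}\in M_k$ is imposed.
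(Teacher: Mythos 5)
Your outline follows the paper's route: reduce to $\dd{E}_{\nu^{-}_{k}}(f_{\vc{\theta}}(X(0)))$ since the two measures agree on $\sr{F}_{0}$, describe $\nu^{-}_{k}$ via Palm calculus at the exit epochs (exogenous arrivals to $k$ and routed departures $j\to k$), use that $R_{s,k}(0)$ is a freshly sampled $T_{s,k}$ so that \eq{gamma 2} produces the factor $q_{k}(\vc{\theta})^{-1}$, and split according to the sign pattern allowed by $M_{k}$. The genuine gap is the step where you ``integrate out'' the remaining residuals and claim a $\vc{\theta}$-dependent constant multiplying $\varphi_{k}(\vc{\theta})$. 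The identities in \eq{gamma 2} are MGF identities for \emph{full} interarrival and service times, not for residuals seen at a Palm epoch, and under $\nu^{-}_{k}$ the residuals $R_{e,i}(0), R_{s,i}(0)$ for $i\ne k$ are \emph{not} independent of $\vc{L}(0)$, so no inspection-paradox representation delivers the factorization you assert. The paper never needs it: for $i\ne k$ one has $\gamma_{s,i}(\vc{\theta})>0$ because $\vc{\theta}\in\Gamma^{\rs{in}}_{k}$ (this membership, implicit in the definition of $r_{*}(\vcn{e}_{k})$, is exactly what is used), so those residual factors are bounded by $1$, as are $e^{-\gamma_{e,i}(\theta_{i})R_{e,i}(0)}$ when $\theta_{i}\ge 0$; the only independence invoked is that of the fresh $T_{s,k}$. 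In the other branch of $M_{k}$ (all $\theta_{i}<0$, $i\ne k$) the paper instead drops the queue-length factor, since $e^{\sum_{i\ne k}\theta_{i}L_{i}(0)}\le 1$ and $L_{k}(0)=0$, leaving a pure residual moment $\dd{E}_{\nu^{-}_{k}}\big(e^{-\sum_{i}\gamma_{e,i}(\theta_{i})R_{e,i}(0)}\big)$; its finiteness is not the plain equilibrium-residual computation, because the expectation is at Palm epochs of \emph{other} streams (arrival and departure instants), and the paper obtains it from Lemma 4.8 of \cite{Miya2017}, originally Lemma 4.2 of \cite{SadoSzpa1995}. So the mixed-sign exclusion in $M_{k}$ is not about integrability of residuals per se, but about being able to bound either all residual factors or the whole $\vc{L}$-factor by $1$, so that the joint dependence never has to be disentangled.

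A second, smaller ingredient you leave implicit is the passage from the Palm-weighted quantity to $\varphi_{k}(\vc{\theta})$: the paper bounds $e^{\br{\vc{\theta},\vc{L}(t-)}}$ for $t\in(0,1]$ by $e^{\br{\vc{\theta},\vc{L}(0)}}$ times exponentials of the increments, which are controlled by the counting processes $N_{e,i}$ and $N_{s,j,i}$ over a unit interval; these are independent of $\vc{L}(0)$ and have super-exponential tails, which is what makes your ``proportional to $\varphi_{k}(\vc{\theta})$'' literally true. With these two repairs your argument coincides with the paper's proof.
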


\begin{proof}
Since $\dd{P}_{\nu^{-}_{k}}$ is identical with $\widetilde{\dd{P}}^{(\vc{\theta})}_{\nu^{-}_{k}}$ on $\sr{F}_{0}$, it is enough to show that
\begin{align}
\label{eq:finite 2}
  \dd{E}_{\nu^{-}_{k}} & (f_{\vc{\theta}}(X(0))) = \dd{E}_{\nu^{-}_{k}}(e^{\br{\vc{\theta}, \vc{L}(0)} - \br{\vc{\gamma}_{e}(\vc{\theta}), \vc{R}_{e}(0)  } - \br{\vc{\gamma}_{s}(\vc{\theta}), \vc{R}_{s}(0)}}) < \infty.
\end{align}
We first show that $\varphi_{k}(\vc{\theta}) < \infty$ implies that $\dd{E}_{\nu^{-}_{k}}(e^{\br{\vc{\theta}, \vc{L}}}) < \infty$. To see this, let $N_{e,i}(\cdot)$ be the counting process for the exogenous arrivals at station $i$, and let $N_{d,j,i}(\cdot)$ be the counting process for the customers who are completed service at station $j$ and routed to station $i$, then the Palm formulas for stationary point processes yields
\begin{align}
\label{eq:finite 3}
  \dd{E}_{\nu^{-}_{k}}(e^{\br{\vc{\theta}, \vc{L}}}) \le & 1(k \in \sr{E}) \lambda_{k} \dd{E}_{\nu} \Bigg( \int_{0}^{1} e^{\br{\vc{\theta}, \vc{L}(t-)}} 1(\vc{L}(t-) \in \partial_{k} S_{1}) N_{e,k}(dt)  \Bigg) \nonumber\\
  & + \sum_{j \in \sr{J} \setminus \{k\}} \alpha_{j} p_{jk} \dd{E}_{\nu} \Bigg( \int_{0}^{1} e^{\br{\vc{\theta}, \vc{L}(t-)}} 1(\vc{L}(t-) \in \partial_{k} S_{1}) N_{d,j,k}(dt) \Bigg),
\end{align}
where $\partial_{k} S_{1} = \partial_{\{k\}} S_{1}$ and the inequality becomes equality if point processes $N_{e,k}, N_{d,j,k}$ have no common point. Let $N_{e,i} \equiv 0$ for $i \in \sr{J} \setminus \sr{E}$ for convenience, then
\begin{align*}
  \max_{t \in (0,1]} (L_{i}(t) - L_{i}(0)) \le d+1+ N_{e,i}((t_{e,i}(1),1]) + \sum_{j \in \sr{J}} N_{s,j,i}((t_{s,j}(1),1]), \qquad i \in \sr{J},
\end{align*}
where $N_{s,j,i}(\cdot)$ is the stationary counting process for the number of service completions routed to station $i$ when the server at station $j$ is always busy. Since $\vc{L}(0)$ is independent of $N_{e,i}((t_{e,i}(1),1])$ and $N_{s,j}((t_{s,j}(1),1])$ for $i \in \sr{E}, j \in \sr{J}$, \eq{finite 3} implies that
\begin{align*}
  \dd{E}_{\nu^{-}_{k}}(e^{\br{\vc{\theta}, \vc{L}}}) \le \varphi_{k}(\vc{\theta}) \Big(\lambda_{k} + \sum_{j' \in \sr{J} \setminus \{k\}} \alpha_{j'} p_{j'k} \Big) \dd{E}_{\nu}(e^{\sum_{i \in \sr{E}} |\theta_{i}| (d+1+N_{e,i}(0,1] + \sum_{j \in \sr{J}} N_{s,j,i}((0,1])}).
\end{align*}
This proves the claim that $\dd{E}_{\nu^{-}_{k}}(e^{\br{\vc{\theta}, \vc{L}}}) < \infty$ since $N_{e,i}((0,1])$ and $N_{s,j}((0,1])$ for $i \in \sr{E}, j \in \sr{J}$ are independent and have super exponential distributions, that is, their tails are asymptotically faster than any exponential function (see, e.g., Lemma 4.1 of \cite{Miya2017}).

We now prove \eq{finite 2}. Note that its terms multiplied by $\gamma_{e,i}(\theta_{i}) \ge 0$, which is equivalent to $\theta_{i} \ge 0$, or $\gamma_{s,i}(\vc{\theta})$ for $i \in \sr{J} \setminus \{k\}$ can be dropped to bound the second expectation term  in \eq{finite 2} because $\vc{\theta} \in \Gamma^{\rs{in}}_{k}$. Furthermore, $R_{s,k}(0) = T_{s,k}$ under the distribution $\nu^{-}_{k}$. Let $K_{-}(k,\vc{\theta}) = \{i \in \sr{E} \setminus \{k\}; \theta_{i} < 0\}$. Thus, it follows from the equation in \eq{finite 2} that
\begin{align}
  \dd{E}_{\nu^{-}_{k}} & (f_{\vc{\theta}}(X(0))) \le \dd{E}_{\nu^{-}_{k}}\big(e^{\sum_{i \in \sr{J} \setminus \{k\}}\theta_{i} L_{i}(0) - \sum_{i \in K_{-}(k,\vc{\theta})} \gamma_{e,i}(\theta_{i}) R_{e,i}(0)}\big) q_{k}(\vc{\theta})^{-1}.
\end{align}
Thus, \eq{finite 2} is immediate if $\theta_{i} \ge 0$, equivalently, $\gamma_{e,i}(\theta_{i}) \ge 0$, for all $i \in \sr{J} \setminus \{k\}$ since $\theta_{k} \ge 0$ and $\varphi_{k}(\vc{\theta}) < \infty$. Hence, it remains to prove \eq{finite 2} when $\theta_{i} < 0$ for all $i \in \sr{J} \setminus \{k\}$. In this case, \eq{finite 2} is obtained from that
\begin{align}
\label{eq:finite 4}
  \dd{E}_{\nu^{-}_{k}}\big(e^{- W_{e,K_{-}(k,\vc{\theta})}(\vc{\theta},t)}\big) < \infty,
\end{align}
where $W_{e,K_{-}(k,\vc{\theta})}(\vc{\theta},t) = \sum_{i \in K_{-}(k,\vc{\theta})} \gamma_{e,i}(\theta_{i}) R_{e,i}(t)$. Let $\dd{E}_{e,i}$ and $\dd{E}_{d,j}$ represent the expectations concerning the stationary embedded distributions at exogenous arrivals at station $i$ and at departure instants at station $j$, respectively. Then, they are known as Palm distributions (e.g., see \cite{BaccBrem2003}), and obtained as
\begin{align*}
  & \dd{E}_{e,i}\big(e^{- \sum_{i \in K_{-}(A,\vc{\theta})} \gamma_{e,i}(\theta_{i}) R_{e,i}(0)}\big) = \lambda_{i} \dd{E}_{\nu} \Bigg( \int_{0}^{1} e^{- W_{e,K_{-}(A,\vc{\theta})}(\vc{\theta},t)} N_{e,i}(dt)  \Bigg), \nonumber\\
  & \dd{E}_{d,j}\big(e^{- \sum_{i \in K_{-}(A,\vc{\theta})} \gamma_{e,i}(\theta_{i}) R_{e,i}(0)}\big) = \alpha_{j} \dd{E}_{\nu} \Bigg( \int_{0}^{1} e^{- W_{e,K_{-}(A,\vc{\theta})}(\vc{\theta},t)} N_{d,j}(dt) \Bigg),
\end{align*}
where $N_{d,j}(t) = \sum_{i \in \sr{J}} N_{d.j.i}(t)$. From a similar bound in \eq{finite 3}, \eq{finite 4} is obtained from
\begin{align}
\label{eq:finite 5}
  & \dd{E}_{e,i}\big(e^{- \sum_{i \in K_{-}(A,\vc{\theta})} \gamma_{e,i}(\theta_{i}) R_{e,i}(0)}\big) < \infty, \qquad i \in A \cap \sr{E},\\
\label{eq:finite 6}
  & \dd{E}_{d,j}\big(e^{- \sum_{i \in K_{-}(A,\vc{\theta})} \gamma_{e,i}(\theta_{i}) R_{e,i}(0)}\big) < \infty, \qquad j \in \sr{J}.
\end{align}
Since $-\gamma_{e,i}(\theta_{i}) > 0$ for $i \in K_{-}(A,\vc{\theta})$, we can apply Lemma 4.8 of \cite{Miya2017}, which is originally from Lemma 4.2 of \cite{SadoSzpa1995}, and obtain \eq{finite 5} and \eq{finite 6}.
\end{proof}

\begin{figure}[h] 
   \centering
   \includegraphics[height=5cm]{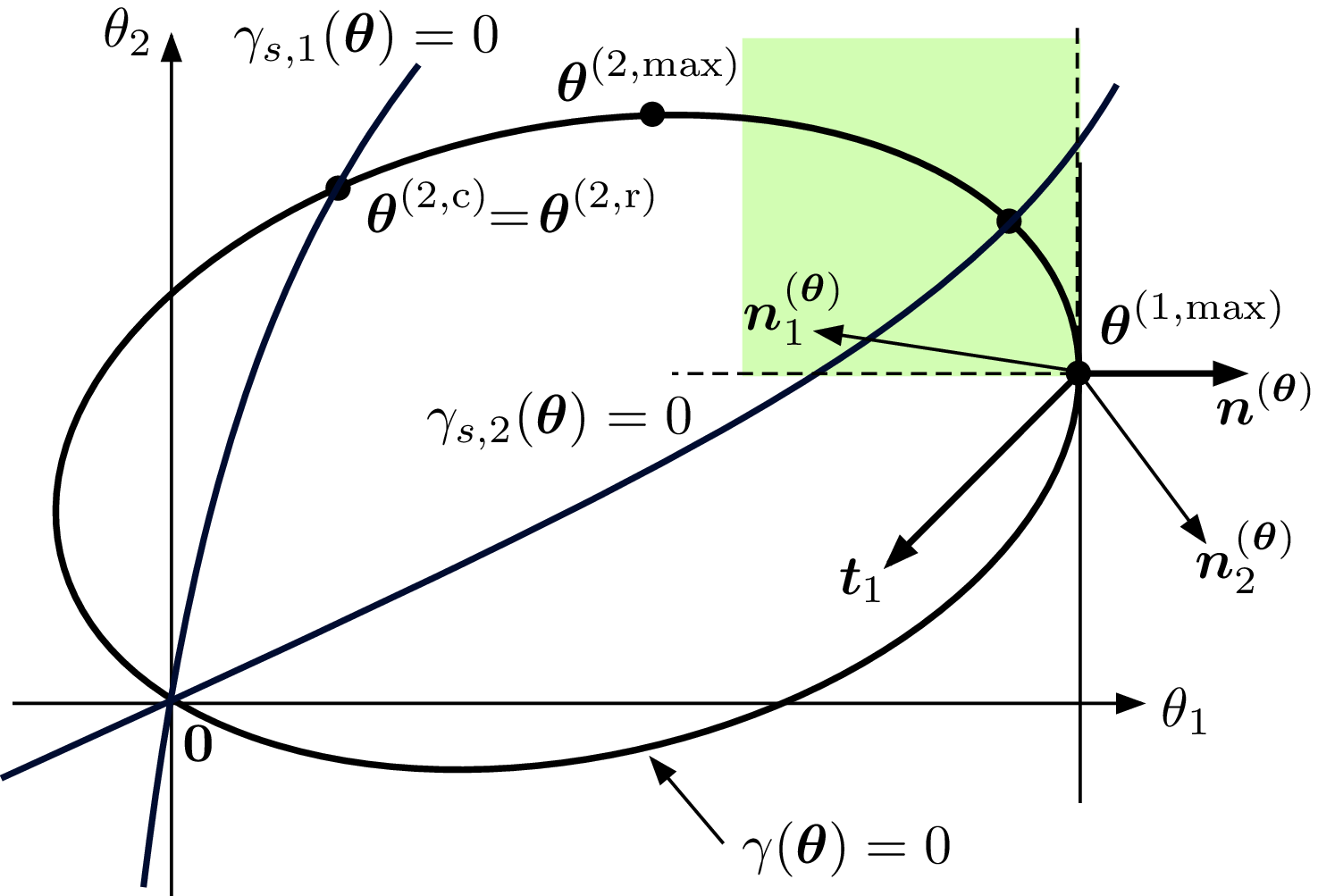} \hspace{1ex} \includegraphics[height=5cm]{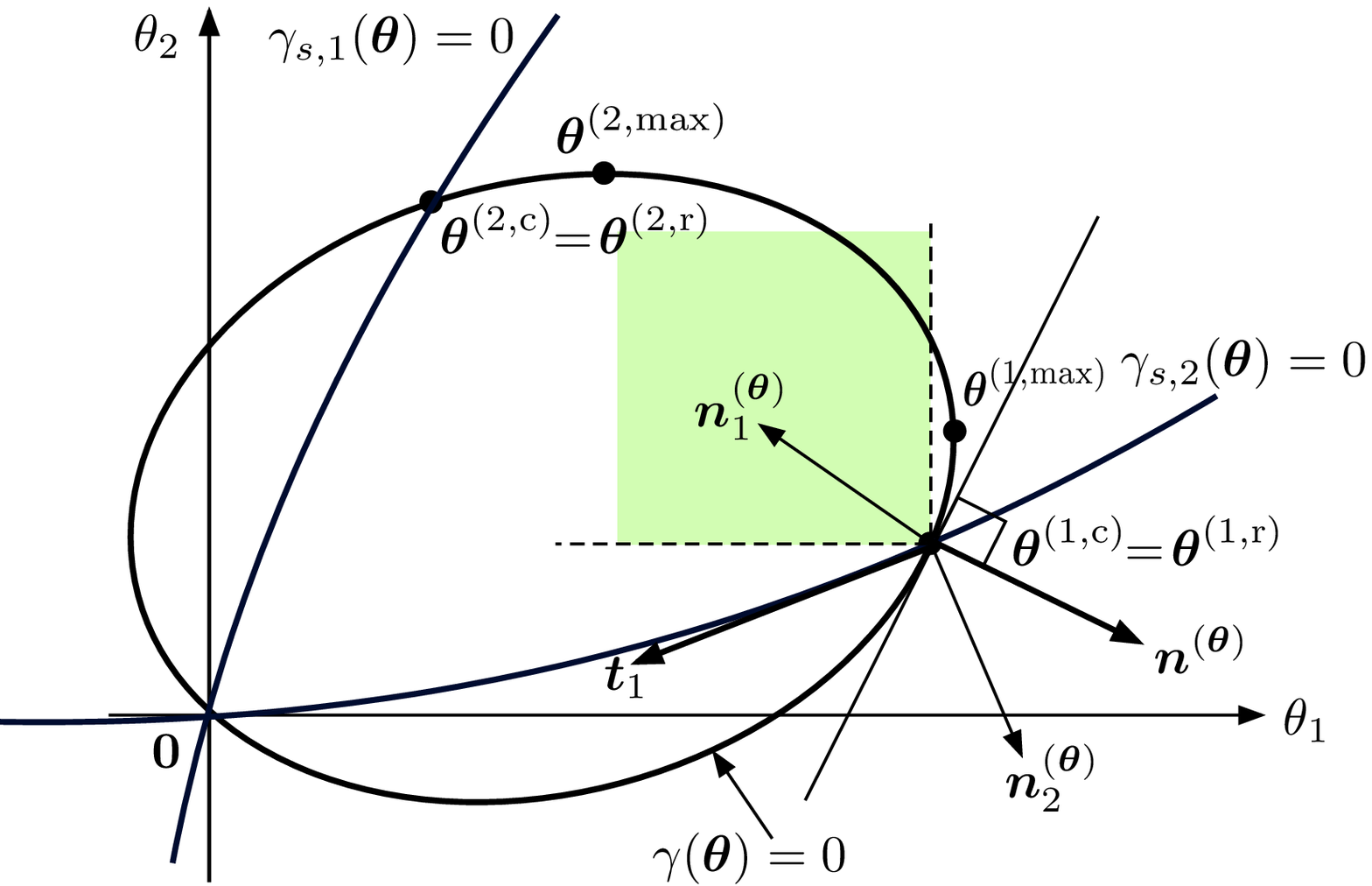}
   \caption{Geometric objects for $d=2$, where $\vc{n}^{(\vc{\theta})} = \nabla {\gamma}(\vc{\theta})$, $\vc{n}^{(\vc{\theta})}_{i} = \nabla \gamma_{i}(\vc{\theta})$ for $i=1,2$.}
   \label{fig:2d-GJN-C3}
\end{figure}

\begin{lemma}
\label{lem:hitting 1}
For $d = 2$, $k =1,2$ and compact set $B_{0} \subset  \dd{R}_{+}^{2}$, let $B_{k}(x) = x\vcn{e}_{k} + B_{0}$, and let
\begin{align}
\label{eq:cp 1}
  \vc{\theta}^{(\rs{cp}_{k})} = \argsup_{\vc{\theta} \in \Gamma^{\rs{in}}_{k}} \theta_{k}, \qquad k=1,2,
\end{align}
then $\liminf_{x \to \infty} \widetilde{\dd{P}}^{(\vc{\theta})}_{\nu^{-}_{\{k\}}}(\tau^{\rs{in}}_{B_{k}(x)} < \infty) > 0$ if $\|\vc{\theta} - \vc{\theta}^{(\rs{cp}_{k})}\|$ is sufficiently small and $\theta_{2} < \theta^{(\rs{cp}_{1})}_{2}$.
\end{lemma}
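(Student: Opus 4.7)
The plan is to exploit the change-of-measure machinery from \sectn{change} to turn $\widetilde{\dd{P}}^{(\vc{\theta})}$ into a probability law under which station $k$ becomes unstable while the other station stays (weakly) stable. Under such a law, $L_k(t)\to\infty$ almost surely and $L_{3-k}(t)$ stays tight, and since $\vc{L}(\cdot)$ moves by unit steps and $L_{3-k}$ visits every nonnegative integer i.o., the joint path enters every translate $B_k(x)=x\vcn{e}_k+B_0$ almost surely. This yields $\widetilde{\dd{P}}^{(\vc{\theta})}_{\nu^-_{\{k\}}}(\tau^{\rs{in}}_{B_k(x)}<\infty)\to 1$, which is far stronger than the required $\liminf>0$.

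To set this up, I would first verify that the supremum in \eq{cp 1} is attained: $\Gamma^{\rs{in}}_k$ is bounded (since $\gamma$ is convex and diverges at infinity) and nonempty by \lem{geometric 2}(a), so $\vc{\theta}^{(\rs{cp}_k)}$ exists on $\partial\Gamma^{\rs{in}}_k$. Applying KKT to the convex program $\sup\{\theta_k:\gamma(\vc{\theta})\le 0,\ \gamma_{s,3-k}(\vc{\theta})\ge 0\}$, either only the constraint $\gamma=0$ is active at $\vc{\theta}^{(\rs{cp}_k)}$ and $\nabla\gamma(\vc{\theta}^{(\rs{cp}_k)})$ is parallel to $\vcn{e}_k$, or both constraints are simultaneously active at a ``corner'' (as suggested by \fig{2d-GJN-C3}).

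Next, using the identity $\nabla\gamma^{(\vc{\theta})}(\vc{0})=\nabla\gamma(\vc{\theta})$ from \eq{new gradient 1}, I would check that for $\vc{\theta}$ close to $\vc{\theta}^{(\rs{cp}_k)}$ satisfying the stated sign condition,
\[ \langle\nabla\gamma(\vc{\theta}),\vc{t}^{(\vc{\theta})}_k\rangle<0 \quad\text{and}\quad (\nabla\gamma(\vc{\theta}))_{3-k}\le 0. \]
By \lem{geometric 1} and \lem{unstable 1}(b) applied under $\widetilde{\dd{P}}^{(\vc{\theta})}$ with the updated primitives $\lambda^{(\vc{\theta})}_i,\mu^{(\vc{\theta})}_i,p^{(\vc{\theta})}_{ij}$ from \sectn{new}, this translates into $\rho^{(\vc{\theta})}_k>1$ and $\rho^{(\vc{\theta})}_{3-k}\le 1$. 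A standard SLLN/fluid-limit argument for the GJN (applied under the new measure, with the initial law $\nu^-_{\{k\}}$ unaffected on $\sr{F}_0$) then gives the almost-sure divergence of $L_k(t)$ and the tightness of $L_{3-k}(t)$, from which the hitting conclusion follows.

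The main obstacle is the gradient verification, particularly in the corner case when both $\gamma=0$ and $\gamma_{s,3-k}=0$ are active at $\vc{\theta}^{(\rs{cp}_k)}$, so the two required inequalities degenerate into equalities there. One must show that perturbing $\vc{\theta}$ in the direction permitted by the stated sign condition strictly opens both. This uses the convexity of $\gamma$ and $\gamma_{s,j}$, the sign structure of $\vc{t}^{(\vc{\theta})}_k$ recorded in \lem{geometric sign 1}, and continuity of $\vc{t}^{(\vc{\theta})}_k$ in $\vc{\theta}$; the geometry in \fig{2d-GJN-C3} is the essential guide for picking the correct perturbation direction.
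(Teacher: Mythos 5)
Your overall route is the same as the paper's: use \eq{new gradient 1} to transfer the gradient criteria of \lem{geometric 1} to the network under $\widetilde{\dd{P}}^{(\vc{\theta})}$, invoke \lem{unstable 1} to make station $k$ unstable and station $3-k$ stable, and then deduce a positive hitting probability for $B_{k}(x)$. The genuine gap is that the decisive step --- establishing the strict inequalities $[\nabla\gamma(\vc{\theta}^{(\rs{cp}_{1})})]_{2} < 0$ and $\br{\nabla\gamma(\vc{\theta}^{(\rs{cp}_{1})}), \vc{t}^{(\vc{\theta}^{(\rs{cp}_{1})})}_{1}} < 0$, and their persistence for nearby $\vc{\theta}$ with $\theta_{2} < \theta^{(\rs{cp}_{1})}_{2}$ --- is exactly what you call ``the main obstacle'' and leave unproved: you list the tools (convexity, \lem{geometric sign 1}, continuity of $\vc{t}^{(\vc{\theta})}_{k}$) but give no argument, and your KKT dichotomy by itself does not exclude that the inner product vanishes or has the wrong sign in the corner case where both $\gamma=0$ and $\gamma_{s,2}=0$ are active. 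The paper's proof supplies precisely this content: if $\vc{\theta}^{(\rs{cp}_{1})}$ coincides with the maximizer of $\theta_{1}$ over $\Gamma^{\rs{in}}$, then $\nabla\gamma$ there is proportional to $\vcn{e}_{1}$ and $\vc{t}^{(\vc{\theta}^{(\rs{cp}_{1})})}_{1} < \vc{0}$ gives the second inequality; otherwise one writes the curve $\gamma_{s,2}(\vc{\theta})=0$ as $\theta_{2}=f(\theta_{1})$ with $f$ increasing and convex, observes that it crosses the curve $\gamma(\vc{\theta})=0$ only once for $\theta_{1}>0$ so that its slope at the crossing point is strictly smaller, and again combines this with $\vc{t}^{(\vc{\theta}^{(\rs{cp}_{1})})}_{1} < \vc{0}$. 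Without some such comparison of the two curves, the lemma does not follow from your outline.

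Two further points need repair. First, you only require $(\nabla\gamma(\vc{\theta}))_{3-k} \le 0$, which via \lem{unstable 1}(a) yields only weak stability of station $3-k$ (null recurrence is not excluded); that is not enough for the tightness of $L_{3-k}$ on which your hitting argument relies, and it is the reason the paper works with the strict inequality and perturbs to $\theta_{2} < \theta^{(\rs{cp}_{1})}_{2}$. Second, your claim that the hitting probability tends to one because ``the joint path enters every translate $B_{k}(x)$ almost surely'' is an overstatement: between successive returns of $L_{3-k}$ to small values, $L_{k}$ advances by a random, unbounded increment, so the path need not meet $x\vcn{e}_{k}+B_{0}$ for every large $x$ when $B_{0}$ is a fixed compact set. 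Only the $\liminf > 0$ asserted in the lemma should be expected, and it requires a lower bound on the hitting probability that is uniform in $x$ (e.g.\ from a fixed initial state under the changed measure), not the almost-sure statement you invoke.
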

\begin{proof}
For notational symmetry, we only consider the case for $k=1$. Clearly, the lemma is obtained if $\liminf_{x \to \infty} \widetilde{\dd{P}}^{(\vc{\theta})}_{\nu^{-}_{\{k\}}}(\vc{L}(0) = \vc{z}, \tau^{\rs{in}}_{B_{k}(x)} < \infty) > 0$. It is not hard to see that this is obtained if station $1$ is unstable and station $2$ is stable under $\widetilde{\dd{P}}^{(\vc{\theta})}_{\nu^{-}_{\{k\}}}(\cdot|\vc{L}(0) = \vc{z})$. By \lem{geometric 1} and \eq{new gradient 1}, this is obtained if
\begin{align*}
  [\nabla \gamma(\vc{\theta})]_{2} < 0, \qquad \br{\nabla \gamma(\vc{\theta}), \vc{t}^{(\vc{\theta})}_{1}} < 0,
\end{align*}
which are satisfied if $\vc{\theta}$ is chosen so that $\|\vc{\theta} - \vc{\theta}^{(\rs{cp}_{1})}\|$ is sufficiently small, $\theta_{2} < \theta^{(\rs{cp}_{1})}_{2}$ and
\begin{align}
\label{eq:gamma sign 2}
  [\nabla \gamma(\vc{\theta}^{(\rs{cp}_{1})})]_{2} < 0, \qquad \br{\nabla \gamma(\vc{\theta}^{(\rs{cp}_{1})}), \vc{t}^{(\vc{\theta}^{(\rs{cp}_{1})})}_{1}} < 0,
\end{align}
where we recall that $\vc{t}^{(\vc{\theta})}_{1}$ is defined in \lem{geometric sign 1}. The first inequality follows from the convexity of $\Gamma^{\rs{in}}$ and the definition \eq{cp 1} (see \fig{2d-GJN-C3}). For the second inequality, let
\begin{align*}
  \vc{\theta}^{(1,\max)} = \argsup_{\vc{\theta} \in \Gamma^{\rs{in}}} \theta_{1}.
\end{align*}
If $\vc{\theta}^{(\vc{\theta}^{(\rs{cp}_{1})})} = \vc{\theta}^{(1,\max)}$, then the second inequality of \eq{gamma sign 2} is immediate because $\nabla \gamma(\vc{\theta}^{\vc{\theta}^{(1,\max)}})$ is proportional to $\vcn{e}_{1} \equiv (1,0)$ while $\vc{t}^{(\vc{\theta}^{(\rs{cp}_{1})})}_{1} < \vc{0}$ by \lem{geometric sign 1}. Otherwise, assume that $\vc{\theta}^{(\vc{\theta}^{(\rs{cp}_{1})})} \ne \vc{\theta}^{(1,\max)}$, and let $f$ be a function from $\dd{R}$ to $\dd{R}$ such that $\theta_{2} = f(\theta_{1})$ is determined by $\gamma_{s,2}(\vc{\theta}) = 0$. We then observe that $f(\theta_{1})$ is increasing convex in $\theta_{1}$, and its derivative is smaller than that of the curve $\gamma(\vc{\theta}) = 0$ at $\vc{\theta} = \vc{\theta}^{(\rs{cp}_{k})}$ because $\vc{\theta}^{(\rs{cp}_{k})}$ is only one cross point of those two curves for $\theta_{1} > 0$ and $\Gamma^{\rs{in}}$ is not empty. Again the tangent vector $\vc{t}^{(\vc{\theta}^{(\rs{cp}_{1})})}_{1} < \vc{0}$ by \lem{geometric sign 1}, and therefore the second inequality of \eq{gamma sign 2} must hold.
\end{proof}

\begin{lemma}
\label{lem:divergent 1}
Under $\widetilde{\dd{P}}^{(\vc{\theta})}$, all stations of the GJN are weakly unstable if $\nabla \gamma(\vc{\theta}) \ge 0$ for $\vc{\theta} \in \dd{R}^{d}$.
\end{lemma}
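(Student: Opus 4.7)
The plan is to translate the hypothesis on $\nabla\gamma(\vc{\theta})$ into a statement about the primitives of the network under $\widetilde{\dd{P}}^{(\vc{\theta})}$, and then invoke Lemma~\ref{lem:unstable 1}(c) applied to the measure-changed model. The key leverage is that, as noted in \sectn{new} just after \eq{new gradient 3}, the network under $\widetilde{\dd{P}}^{(\vc{\theta})}$ is again a GJN with modified primitives $\lambda_i^{(\vc{\theta})}$, $\mu_i^{(\vc{\theta})}$, $p_{ij}^{(\vc{\theta})}$, and the conclusions of Lemmas~\ref{lem:unstable 1} and \ref{lem:geometric 1} apply verbatim to it once we replace the original primitives by the superscripted ones.

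First I would invoke \eq{new gradient 1}, which gives $\nabla\gamma^{(\vc{\theta})}(\vc{0})=\nabla\gamma(\vc{\theta})$, so the hypothesis $\nabla\gamma(\vc{\theta})\ge\vc{0}$ reads $\nabla\gamma^{(\vc{\theta})}(\vc{0})\ge\vc{0}$ componentwise. Next, combining this with the expression \eq{new gradient 2} for $\nabla\gamma^{(\vc{\theta})}(\vc{0})$, the $j$-th inequality $[\nabla\gamma^{(\vc{\theta})}(\vc{0})]_j\ge 0$ is exactly
\begin{align*}
  \lambda_j^{(\vc{\theta})} + \sum_{k\in\sr{J}}\mu_k^{(\vc{\theta})}p_{kj}^{(\vc{\theta})} \ge \mu_j^{(\vc{\theta})}, \qquad j\in\sr{J},
\end{align*}
which is the analog of \eq{mu 2} for the GJN under $\widetilde{\dd{P}}^{(\vc{\theta})}$. (Equivalently, this is the updated version of \lem{geometric 1}(a), stated with opposite inequality sign, applied to the new network.)

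Then I would apply the updated version of Lemma~\ref{lem:unstable 1}(c) to the GJN under $\widetilde{\dd{P}}^{(\vc{\theta})}$. Since that lemma asserts that the componentwise condition above forces the traffic intensities $\rho_j^{(\vc{\theta})}$ of the modified network to satisfy $\rho_j^{(\vc{\theta})}\ge 1$ for every $j\in\sr{J}$, we conclude that every station is weakly unstable under $\widetilde{\dd{P}}^{(\vc{\theta})}$, which is precisely the claim.

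There is no real obstacle here since the proof is essentially a bookkeeping exercise: the invariance of the network structure under change of measure (as established in \sectn{new}) makes both \lem{unstable 1} and \lem{geometric 1} directly reusable with superscripted primitives. The only point to check carefully is that $P^{(\vc{\theta})}$ remains strictly substochastic and $\overline{P}^{(\vc{\theta})}$ irreducible so that \lem{unstable 1}(c) applies, but this is immediate from \eq{pij 1}, since the exponential factors $e^{-\theta_i+\theta_j}/q_i(\vc{\theta})$ preserve the zero/nonzero pattern of $P$ and hence both structural properties of $\overline{P}$.
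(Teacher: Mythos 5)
Your proposal is correct and follows essentially the same route as the paper: the paper's own proof simply notes via \eq{new gradient 1} that the hypothesis gives $\nabla \gamma^{(\vc{\theta})}(\vc{0}) \ge \vc{0}$ and then invokes part (c) of \lem{unstable 1} for the network under $\widetilde{\dd{P}}^{(\vc{\theta})}$, relying on the remark after \eq{new gradient 3} that Lemmas \lemt{unstable 1} and \lemt{geometric 1} carry over verbatim to the changed-measure GJN. Your additional bookkeeping---writing out the componentwise condition through \eq{new gradient 2} and checking that $P^{(\vc{\theta})}$ stays strictly substochastic and $\ol{P}^{(\vc{\theta})}$ irreducible via \eq{pij 1}---is exactly what that remark implicitly covers, so it is a slightly more explicit rendering of the same argument.
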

\begin{proof}
By \eq{new gradient 1} and the choice of $\vc{\theta}$, $\gamma^{(\vc{\theta})}(\vc{0}) \ge \vc{0}$, and therefore (c) of \lem{unstable 1} proves this lemma.
\end{proof}

\subsection{Proofs of theorems and their corollary}
\label{sect:theorem}

In this subsection, we prove Theorems \thrt{decay 1} and \thrt{decay 2} and \cor{decay 1}.

\begin{proof*}{Proof of \thr{decay 1}}
We apply the procedure in \sectn{method}. (a) Fix $k \in \sr{J}$, and put $B(x) = x \vcn{e}_{k} + B_{0}$ and let $\tau_{x} = \tau^{\rs{in}}_{B(x)}$. Since $B_{0}$ is a compact set, \eq{bounded 1} is satisfied. Hence, all the steps 1)--5) are verified by Lemmas \lemt{Y 1} and \lemt{finite 1}. 

\noindent (b) We first prove \eq{upper boundary 1}. Similar arguments to (a), we put $B(x) = x \vc{c} + B_{0}$ and let $\tau_{x} = \tau^{\rs{in}}_{B(x)}$. We first prove, for each $A \ne \emptyset$, 
\begin{align}
\label{eq:upper 5}
 & \limsup_{x \to \infty} \frac 1x \log \dd{P}(\vc{L} \in x \vc{c} + B_{0}) \le - r_{A}(\vc{c}).
\end{align}
We only need to verify step 3), that is, $\dd{E}_{\nu^{-}_{i}}(f_{\vc{\theta}}(X(0))) < \infty$ for all $i \in A$ because \lem{finite 1} can not be used. We here use the assumption (A1), then it is not hard to see that, for $i \in A$, $\varphi_{i}(\vc{\theta}) < \infty$ implies that $\dd{E}_{\nu_{i}}(f_{\vc{\theta}}(X(0))) < \infty$. The latter finiteness implies that $\dd{E}_{\nu^{-}_{i}}(f_{\vc{\theta}}(X(0))) < \infty$ as shown in the proof of \lem{finite 1}. Thus, Step 3) is verified, and \eq{upper 5} is obtained. Taking the minimum of the right-hand side of \eq{upper 5} for $A \subset \sr{J}$ and $A \ne \emptyset$, we obtain \eq{upper boundary 1}.

We next prove \eq{upper marginal 1}.  Let $\vc{\theta} = u \vc{c}_{0}$ for $u > 0$ for an arbitrarily chosen $\vc{c}_{0} \in \overleftarrow{U}_{d}$, and put
\begin{align*}
  B(x) = \{\vc{z} \in \dd{Z}_{+}^{d}; x < \br{u\vc{c}_{0}, \vc{z}} \le x+1\},
\end{align*}
then \eq{bounded 1} is satisfied, and therefore we can use \lem{finite 1}. By (A1), Step 3 works as shown in (a). For Step 4, we put $\ol{a}_{0}(\vc{\theta}) = 0$ and $\ol{a}_{1}(\vc{\theta}) = u$, then \eq{bound 2} is satisfied. Thus, if we choose $u \vc{c}_{0} \in \Gamma_{A}$, all the steps works, and we have
\begin{align*}
  \limsup_{x \to \infty} \frac 1x \log \dd{P}(x < \br{\vc{c}_{0},\vc{L}} \le x + 1) \le - u,
\end{align*}
as long as $\varphi_{i}(\vc{\theta}) < \infty$ for all $i \in A$. Because $\Gamma_{A}$ is open set, this obviously implies that
\begin{align*}
  \limsup_{x \to \infty} \frac 1x \log \dd{P}(\br{\vc{c}_{0},\vc{L}} > x) \le - u.
\end{align*}
Furthermore, we have, for any $\epsilon \in (0,u]$ and some $x_{0}>0$, 
\begin{align*}
  \dd{P}(\br{\vc{c}_{0},\vc{L}} > x) \le e^{-(u-\epsilon/2)x},
\end{align*}
and therefore, for all $u > 0$ such that $u \vc{c}_{0} \in \Gamma_{A}$, we have $\dd{E}(e^{(u-\epsilon) \br{\vc{c}_{0},\vc{L}}}) < \infty$, which implies that $\dd{E}(e^{\br{\vc{\theta},\vc{L}}}) < \infty$ for all $\vc{\theta} < (u-\epsilon) \vc{c}_{0}$. Since $\overleftarrow{\Gamma}^{\rs{in}}_{A}$ is an open set, this further implies that $\dd{E}(e^{\br{\vc{\theta},\vc{L}}}) < \infty$ for $\vc{\theta} \in \overleftarrow{\Gamma}^{\rs{in}}_{A}$. For a given $\vc{c}$, we choose $u>0$ such that $u \vc{c} \in \overleftarrow{\Gamma}^{\rs{in}}_{A}$, and put $\vc{\theta} = u \vc{c}$. Then,
\begin{align*}
  e^{u x}\dd{P}( \br{\vc{c},\vc{L}} > x) \le  \dd{E}(e^{\br{\vc{\theta},\vc{L}}}) < \infty,
\end{align*}
as long as $\varphi_{i}(\vc{\theta}) < \infty$ for all $i \in A$, and therefore we have
\begin{align*}
  \limsup_{x \to \infty} \frac 1x \log \dd{P}(\br{\vc{c},\vc{L}} > x) \le -m_{A}(\vc{c}).
\end{align*}
Thus, we complete the proof by taking the minimum of the right-hand side of the above inequality over $A \subset \sr{J} \setminus \emptyset$.
\end{proof*}

\begin{proof*}{Proof of \thr{decay 2}}
We apply the lower bound procedure 1')--5'). Because of symmetry, it suffices to prove for $k=1$. (a) Put  $A = \{1\}$, and let $B(x) = x \vcn{e}_{1}+B_{0}$ and $\tau_{x} = \tau^{\rs{in}}_{B(x)}$. We choose $\vc{\theta}$ such that $\gamma(\vc{\theta}) > 0$ and $\gamma_{s,2}(\vc{\theta}) < 0$, then Step 1') works. Step 2') is obviously verified because $Y(\tau_{x})$ does not decrease as $x$ gets large. Step 3') is also obvious because $B_{0}$ is compact. For step 4'), we can take any bounded set for $U$. Then, if we take $\vc{\theta}$ which is sufficiently close to $\vc{\theta}^{(\rs{cp}_{k})}$, then \eq{hitting 1} holds by \lem{hitting 1}, while \eq{U 1} obviously holds. This completes the procedure, and \eq{lower 1a}  is obtained.

\noindent (b) We restrict the initial state in a bounded set $C$ such that $C \subset \partial S_{1} \times \dd{R}_{+}^{2d}$ and
\begin{align*}
  \dd{E}(f(X(0))1(X(0) \in C)) > 0.
\end{align*}
Let $\vc{c} \in {\rm Corn}(\overleftarrow{\Gamma}^{\rs{in}} \cap \partial \Gamma^{\rs{in}})$, which implies that $\nabla g(u\vc{c}) \ge \vc{0}$ for $u\vc{c} \in {\rm Corn}(\overleftarrow{\Gamma}^{\rs{in}} \cap \partial \Gamma^{\rs{in}})$ by the convexity of $\Gamma^{\rs{in}}$. Choose $z_{0}$ such that $\vc{x} \equiv (\vc{z}, \vc{y}) \in C$ implies that $\max_{i \in \sr{J}} z_{i} < z_{0}$. We let
\begin{align*}
  B(x) = \{\vc{z} \in \dd{Z}_{+}^{d}; \br{\vc{c}, \vc{z}} > x\}, \qquad x > z_{0},
\end{align*}
and let $\tau_{x} = \tau^{\rs{in}}_{B(x)}$. Then, Step 2') is obviously valid. Because the initial state is in $C$,
\begin{align*}
  u x < \br{u\vc{c}, \vc{L}(\tau_{x})} \le u x+1.
\end{align*}
Hence, the condition \eq{bound 3} in Steps 3') is satisfied for $x \ge z_{0}$. Furthermore, if we take $\vc{\theta} = u \vc{c}$ for the change of measure, then all the stations are weakly stable by \lem{divergent 1}, which implies that
\begin{align}
\label{eq:tau finite 1}
  \dd{E}(f(X(0))1(X(0) \in C, \tau_{x} < \infty)) = \dd{E}(f(X(0))1(X(0) \in C)) > 0,
\end{align}
and therefore \eq{hitting 1} is satisfied for $A = \sr{J}$. Thus, all the steps work well, and the proof is completed.

\noindent (c) We take the same $B(x), \tau_{x}$ and $C$ as in (a). Let ${\rm Corn}(\vc{a}, \vc{b}) = \{\vc{x} \in \dd{R}_{+}^{2}; s \vc{a} + t \vc{b}, s,t \ge 0\}$ for $\vc{a}, \vc{b} \in \dd{R}_{+}^{2}$. For $\vc{c} \in {\rm Corn}(\overleftarrow{\Gamma}^{\rs{in}} \cap \partial \Gamma^{\rs{in}}_{1})$, we separately consider the two cases that $\vc{c} \in {\rm Corn}(\vcn{e}_{1}, \vc{\theta}^{(\rs{cp}_{1})})$ or not. If $\vc{c} \not\in {\rm Corn}(\vcn{e}_{1}, \vc{\theta}^{(\rs{cp}_{1})})$, the asymptotic is covered by \eq{lower 2}. Thus, we assume that $\vc{c} \in {\rm Corn}(\vcn{e}_{1}, \vc{\theta}^{(\rs{cp}_{1})})$. We first choose $u > 0$ and $\vc{c} \in \overrightarrow{U}_{2}$ such that $u \vc{c} = \vc{\theta}^{(\rs{cp}_{1})}$, and make the change of measure for $\vc{\theta} = \vc{\theta}^{(\rs{cp}_{1})}$. Then, we have \eq{tau finite 1} by \lem{hitting 1}. Hence, we have \eq{lower 1b}. We next let $u = \theta^{({\rm \rs{cp}_{1}})}_{1}$ and let $\vc{c} = \vcn{e}_{1}$. In this case, we also have \eq{lower 1b} by \eq{lower 1a}. We finally consider the case that $u \vc{c} = s \vcn{e}_{1} + t \vc{\theta}^{(\rs{cp}_{1})} \in \overleftarrow{\Gamma}^{\rs{in}} \cap \partial \Gamma^{\rs{in}}_{1}$. Let $u = \theta^{({\rm \rs{cp}_{1}})}_{1}/c_{1}$, $s = \theta^{(\rs{cp}_{1})}_{1}$ and $t = \theta^{(\rs{cp}_{1})}_{1} c_{2}/\theta^{(\rs{cp}_{1})}_{2}$, then $u \vc{c} = (\theta^{(\rs{cp}_{1})}_{1}, \theta^{(\rs{cp}_{1})}_{1} c_{2}/c_{1})$ is on $\overleftarrow{\Gamma}^{\rs{in}} \cap \partial \Gamma^{\rs{in}}_{1}$. Hence, we have \eq{lower 1b}.
\end{proof*}

\begin{proof*}{Proof of \cor{decay 1}}
(a) For $d=2$, from Theorems \thrt{decay 1} and \thrt{decay 2}, we have
\begin{align}
\label{eq:decay 3}
 & \lim_{x \to \infty} \frac 1x \log \dd{P}(\vc{L} \in x \vcn{e}_{k} + B_{0}) = - r_{*}(\vcn{e}_{k}), \qquad k = 1,2.
\end{align}
Then, we can apply the same algorithm as in Theorem 4.1 of \cite{Miya2009} to find $r_{*}(\vcn{e}_{k})$, which shows that \eq{cp 1a} and \eq{cp 1b} have a unique solution $\vc{\delta} \equiv (\delta_{1}, \delta_{2})$, and $r_{*}(\vcn{e}_{k}) = \delta_{k}$. This proves \eq{decay 1}.

(b) \eq{upper 3} is immediate from (b) of \thr{decay 1} for $A = \sr{J}$. It remains to prove \eq{decay 2}. We first consider the marginal distributions in the coordinate directions. By \eq{upper marginal 1} of \thr{decay 1} for $d=2$, it follows from $ \varphi_{1}(0) \le 1$ that
\begin{align*}
   & \limsup_{x \to \infty} \frac 1x \log \dd{P}(\br{\vcn{e}_{1},\vc{L}} >  x) \le - m_{\{1\}}(\vcn{e}_{1}) = - \sup\{u; u \vcn{e}_{1} \in \overleftarrow{\Gamma}^{\rs{in}}_{1}\}.
\end{align*}
This combining with \eq{lower 1b} concludes that
\begin{align*}
   & \limsup_{x \to \infty} \frac 1x \log \dd{P}(\br{\vcn{e}_{1},\vc{L}} >  x) = - \sup\{u; u \vcn{e}_{1} \in \overleftarrow{\Gamma}^{\rs{in}}_{1}\} = - \delta_{1},
\end{align*}
and therefore $\varphi(\theta_{1},0)$ is finite for $\theta_{1} < \delta_{1}$ and diverges for $\theta_{1} > \delta_{1}$. Similarly, $\varphi(0,\theta_{2})$ is finite for $\theta_{2} < \delta_{2}$. Since $\varphi_{2}(\theta_{1}) \le \varphi(\theta_{1},0)$ and $\varphi_{1}(\theta_{2}) \le \varphi(\theta_{2},0)$, it follows again from \eq{upper marginal 1} that
\begin{align*}
   \limsup_{x \to \infty} \frac 1x \log \dd{P}(\br{\vc{c},\vc{L}} >  x) & \le - m_{\{1,2\}}(\vc{c}) = - \sup\{u; u \vc{c} \in \overleftarrow{\Gamma}^{\rs{in}}, \varphi_{1}(\theta_{2}), \varphi_{2}(\theta_{1}) < \infty\}\\
   & \le - \sup\{u; u \vc{c} \in \overleftarrow{\Gamma}^{\rs{in}}, \theta_{i} < \delta_{i}, \i=1,2 \} = - \sup\{u; u \vc{c} \in \sr{D}_{2}\}.
\end{align*}
Thus, we got the upper bound. By \eq{lower 2} and \eq{lower 1b} of \thr{decay 2}, this upper bound becomes a lower bound. Hence, we have \eq{decay 2}.
\end{proof*}


\begin{thebibliography}{23}
\expandafter\ifx\csname natexlab\endcsname\relax\def\natexlab#1{#1}\fi
\expandafter\ifx\csname url\endcsname\relax
  \def\url#1{\texttt{#1}}\fi
\expandafter\ifx\csname urlprefix\endcsname\relax\def\urlprefix{URL }\fi
\providecommand{\eprint}[2][]{\url{#2}}

\bibitem[{Asmussen(2003)}]{Asmu2003}
\textsc{Asmussen, S.} (2003).
\newblock \textit{Applied probability and queues}, vol.~51 of
  \textit{Applications of Mathematics (New York)}.
\newblock 2nd ed. Springer-Verlag, New York.
\newblock Stochastic Modelling and Applied Probability.

\bibitem[{Avram et~al.(2001)Avram, Dai and Hasenbein}]{AvraDaiHase2001}
\textsc{Avram, F.}, \textsc{Dai, J.~G.} and \textsc{Hasenbein, J.~J.} (2001).
\newblock Explicit solutions for variational problems in the quadrant.
\newblock \textit{Queueing Systems}, \textbf{37} 259--289.

\bibitem[{Baccelli and Br{\'e}maud(2003)}]{BaccBrem2003}
\textsc{Baccelli, F.} and \textsc{Br{\'e}maud, P.} (2003).
\newblock \textit{Elements of queueing theory: Palm martingale calculus and
  stochastic recurrences}, vol.~26 of \textit{Applications of Mathematics}.
\newblock 2nd ed. Springer, Berlin.

\bibitem[{Braverman et~al.(2015)Braverman, Dai and Miyazawa}]{BravDaiMiya2015}
\textsc{Braverman, A.}, \textsc{Dai, J.} and \textsc{Miyazawa, M.} (2015).
\newblock Heavy traffic approximation for the stationary distribution of a
  generalized jackson network: the {BAR} approach.
\newblock Submitted for publication.

\bibitem[{Chen and Mandelbaum(1991{\natexlab{a}})}]{ChenMand1991}
\textsc{Chen, H.} and \textsc{Mandelbaum, A.} (1991{\natexlab{a}}).
\newblock Discrete flow networks: bottlenecks analysis and {fluid}
  approximations.
\newblock \textit{Mathematics of Operations Research}, \textbf{16} 408--446.

\bibitem[{Chen and Mandelbaum(1991{\natexlab{b}})}]{ChenMand1991b}
\textsc{Chen, H.} and \textsc{Mandelbaum, A.} (1991{\natexlab{b}}).
\newblock Stochastic discrete flow networks: Diffusion approximation and
  bottlenecks.
\newblock \textit{Annals of Probability}, \textbf{19} 1463--1519.

\bibitem[{Dai and Miyazawa(2011)}]{DaiMiya2011}
\textsc{Dai, J.~G.} and \textsc{Miyazawa, M.} (2011).
\newblock Reflecting {Brownian} motion in two dimensions: Exact asymptotics for
  the stationary distribution.
\newblock \textit{Stochastic Systems}, \textbf{1} 146--208.
\newblock \urlprefix\url{http://dx.doi.org/10.1214/10-SSY022}.

\bibitem[{Dai and Miyazawa(2013)}]{DaiMiya2013}
\textsc{Dai, J.~G.} and \textsc{Miyazawa, M.} (2013).
\newblock Stationary distribution of a two-dimensional {SRBM}: geometric views
  and boundary measures.
\newblock \textit{Queueing Systems}, \textbf{74} 181--217.
\newblock \urlprefix\url{http://dx.doi.org/10.1007/s11134-012-9339-1}.

\bibitem[{Davis(1984)}]{Davi1984}
\textsc{Davis, M. H.~A.} (1984).
\newblock Piecewise deterministic {M}arkov processes: a general class of
  non-diffusion stochastic models.
\newblock \textit{Journal of Royal Statist.\ Soc.\, series B}, \textbf{46}
  353--388.

\bibitem[{Glynn and Whitt(1994)}]{GlynWhit1994a}
\textsc{Glynn, P.~W.} and \textsc{Whitt, W.} (1994).
\newblock Large deviations behavior of counting processes and their inverses.
\newblock \textit{Queueing Systems}, \textbf{17} 107--128.

\bibitem[{Jacod and Shiryaev(2003)}]{JacoShir2003}
\textsc{Jacod, J.} and \textsc{Shiryaev, A.~N.} (2003).
\newblock \textit{Limit Theorems for stochastic processes}.
\newblock 2nd ed. Springer, Berlin.

\bibitem[{Kingman(1961)}]{King1961b}
\textsc{Kingman, J. F.~C.} (1961).
\newblock A convexity property of positive matrix.
\newblock \textit{The Quarterly Journal of Mathematics}, \textbf{12} 283--284.

\bibitem[{Kunita and Watanabe(1963)}]{KuniWata1963}
\textsc{Kunita, H.} and \textsc{Watanabe, T.} (1963).
\newblock Notes on transformations of markov processes connected with
  multiplicative functionals.
\newblock \textit{Memoirs of the Faculty of Science, Kyushu}, \textbf{17}
  181--191.

\bibitem[{Majewski(2009)}]{Maje2009}
\textsc{Majewski, K.} (2009).
\newblock Functional continuity and large deviations for the behavior of
  single-class queueing networks.
\newblock \textit{Queueing Systems: Theory and Applications}, \textbf{61}
  203--241.

\bibitem[{Miyazawa(2009)}]{Miya2009}
\textsc{Miyazawa, M.} (2009).
\newblock Tail decay rates in double {QBD} processes and related reflected
  random walks.
\newblock \textit{Math. Oper. Res.}, \textbf{34} 547--575.
\newblock \urlprefix\url{http://dx.doi.org/10.1287/moor.1090.0375}.

\bibitem[{Miyazawa(2011)}]{Miya2011}
\textsc{Miyazawa, M.} (2011).
\newblock Light tail asymptotics in multidimensional reflecting processes for
  queueing networks.
\newblock \textit{TOP}, \textbf{19} 233--299.

\bibitem[{Miyazawa(2014)}]{Miya2014}
\textsc{Miyazawa, M.} (2014).
\newblock Superharmonic vector for a nonnegative matrix with {QBD} block
  structure and its application to a {Markov} modulated two dimensional
  reflecting process.
\newblock Working paper.

\bibitem[{Miyazawa(2015)}]{Miya2015a}
\textsc{Miyazawa, M.} (2015).
\newblock A superharmonic vector for a nonnegative matrix with {QBD} block
  structure and its application to a {Markov} modulated two dimensional
  reflecting process.
\newblock \textit{Queueing Systems}, \textbf{81} 1--48.

\bibitem[{Miyazawa(2017)}]{Miya2017}
\textsc{Miyazawa, M.} (2017).
\newblock A unified approach for large queue asymptotics in a heterogeneous
  multiserver queue.
\newblock \textit{Advances in Applied Probability} 49, 182-220.
\newblock Supplemented version, arxiv.org/abs/1510.01034.

\bibitem[{Palmowski and Rolski(2002)}]{PalmRols2002}
\textsc{Palmowski, Z.} and \textsc{Rolski, T.} (2002).
\newblock A technique of the exponential change of measure for {Markov}
  processes.
\newblock \textit{Bernoulli}, \textbf{8} 767--785.

\bibitem[{Reiman(1984)}]{Reim1984}
\textsc{Reiman, M.~I.} (1984).
\newblock Open queueing networks in heavy traffic.
\newblock \textit{Mathematics of Operations Research}, \textbf{9} 441--458.

\bibitem[{Sadowsky and Szpankowski(1995)}]{SadoSzpa1995}
\textsc{Sadowsky, J.} and \textsc{Szpankowski, W.} (1995).
\newblock The probability of large queue lengths and waiting times in a
  heterogeneous multiserver queue {I}: {Tight} limits.
\newblock \textit{Advances in Applied Probability}, \textbf{27} 532--566.

\bibitem[{Whitt(2002)}]{Whit2002}
\textsc{Whitt, W.} (2002).
\newblock \textit{Stochastic-process limits}.
\newblock Springer, New York.

\end{thebibliography}

\def\cprime{$'$} \def\cprime{$'$} \def\cprime{$'$} \def\cprime{$'$}
  \def\cprime{$'$} \def\cprime{$'$} \def\cprime{$'$}

\end{document}